                    \def\version{July 29, 2016}                       %
\def\@rmrk#1#2{\refstepcounter
    {#1}\@ifnextchar[{\@yrmrk{#1}{#2}}{\@xrmrk{#1}{#2}}}
\makeatletter\@addtoreset{equation}{section}\makeatother
 \newfont{\bfit}{cmbxti10 scaled 1200}
\renewcommand{\d}{{\rm d}}
 \newcommand{\e}{{\rm e} }
 \newcommand{\eps}{\varepsilon}
 \newcommand{\R}{\mathbb{R}}
 \newcommand{\N}{\mathbb{N}}
 \newcommand{\E}{\mathbb{E}}
 \renewcommand{\P}{\mathbb{P}}
 \def\1{{\mathchoice {1\mskip-4mu\mathrm l} 
{1\mskip-4mu\mathrm l}
{1\mskip-4.5mu\mathrm l} {1\mskip-5mu\mathrm l}}}
 \newcommand{\Mcal}{{\mathcal M}}
 \newcommand{\weak}{{\Rightarrow}}
\newcommand{\heap}[2]{\genfrac{}{}{0pt}{}{#1}{#2}}
\newcommand{\ssup}[1] {{\scriptscriptstyle{({#1}})}}
\newenvironment{Proof}[1]
{\vskip0.1cm\noindent{\bf #1}}{\vspace{0.15cm}}
\renewcommand{\subsection}{\secdef \subsct\sbsect}
\newcommand{\subsct}[2][default]{\refstepcounter{subsection}
\vspace{0.15cm}
{\flushleft\bf \arabic{section}.\arabic{subsection}~\bf #1  }
\nopagebreak\nopagebreak}
\newcommand{\sbsect}[1]{\vspace{0.1cm}\noindent
{\bf #1}\vspace{0.1cm}}
\newtheorem{theorem}{Theorem}[section]
\newtheorem{lemma}[theorem]{Lemma}
\newtheorem{cor}[theorem]{Corollary}
\newtheoremstyle{thm}{1.5ex}{1.5ex}{\itshape\rmfamily}{}
{\bfseries\rmfamily}{}{2ex}{}
\newtheoremstyle{rem}{1.3ex}{1.3ex}{\rmfamily}{}
{\itshape\rmfamily}{}{1.5ex}{}
\theoremstyle{rem}
\newtheorem{remark}{{\slshape\sffamily Remark}}[]
\def\thebibliography#1{\section*{References}
  \list%
  {\arabic{enumi}.}
    {\settowidth\labelwidth{[#1]}\leftmargin\labelwidth
    \advance\leftmargin\labelsep
    \parsep0pt\itemsep0pt
    \usecounter{enumi}}
    \def\newblock{\hskip .11em plus .33em minus .07em}
    \sloppy                   
    \sfcode`\.=1000\relax}
\begin{document}
\title[Gibbs measures on mutually interacting Brownian paths under singularities]
{\large Gibbs measures on mutually interacting Brownian paths under singularities}
\author[ Chiranjib Mukherjee ]{}
\maketitle
\thispagestyle{empty}
\vspace{-0.5cm}

\centerline{\sc  Chiranjib Mukherjee\footnote{Courant Institute of Mathematical Sciences, 251 Mercer Street, New York 10012, USA, {\tt mukherjee@cims.nyu.edu}}}
\renewcommand{\thefootnote}{}
\footnote{\textit{AMS Subject
Classification:} 60J65, 60J55, 60F10.}
\footnote{\textit{Keywords:} Large deviations, Gibbs measures on Brownian paths, Brownian intersection measures, polaron problem, Parabolic Anderson model}

\vspace{-0.5cm}
\centerline{\textit{Courant Institute New York and WIAS Berlin}}
\vspace{0.2cm}

\begin{center}
\version
\end{center}

\begin{abstract}

We are interested in the analysis of Gibbs measures
defined on two independent Brownian paths in $\R^d$ interacting through a mutual self-attraction. This is expressed by the Hamiltonian 
$\int\int_{\R^{2d}} V(x-y) \mu(\d x)\nu(\d y)$ with two probability measures $\mu$ and $\nu$ representing 
the occupation measures of two independent Brownian motions. 
We will be interested in class of potentials $V$ which are {\it{singular}}, e.g., Dirac or Coulomb 
type interactions in $\R^3$, or the correlation function of the  
parabolic Anderson problem with white noise potential.  

The mutual interaction of the Brownian paths inspires  a compactification of the 
quotient space of orbits of product measures, which is structurally different from the 
self-interacting case introduced in \cite{MV14}, owing to the lack of shift-invariant structure in the mutual 
interaction. We prove a {\it{strong large deviation principle}} for the product measures 
of two Brownian occupation measures in such a compactification, and derive 
asymptotic path behavior under Gibbs measures on Wiener paths
arising from mutually attracting singular interactions. For the spatially smoothened parabolic Anderson model with white noise potential, our analysis allows a direct computation of the annealed Lyapunov exponents
and a strict ordering of them implies the {\it{intermittency effect}} present in the smoothened model.
\end{abstract}

\maketitle   






\section{Introduction and Motivation.}\label{sectintr}

\subsection{Motivation.}

Let $\P^{\ssup 1}$ and $\P^{\ssup 2}$ be two Wiener measures corresponding to two independent Brownian motions $W^{\ssup 1}$ and $W^{\ssup 2}$ in $\R^3$. 
For a function $V: \R^3\to \R$ that vanishes at infinity, we can write down the transformed path measures
\begin{equation}\label{pathmeasure} 
\d \widehat\P_t^\otimes=  \frac 1 {Z_t} \exp\bigg\{\frac 1t \int_0^t \int_0^t V\big(\omega^{\ssup 1}_\sigma- \omega^{\ssup 2}_s\big)\,\,\d\sigma\,\d s\bigg\} \,\,\d\P^{\otimes}
\end{equation}
Here $Z_t$ is the normalizing constant that makes $\widehat\P_t^\otimes$ a probability measure and $\P^\otimes=\P^{\otimes}\otimes\P^{\ssup 2}$.
If, for each $i=1,2$,
$$
L^{\ssup i}_t=\frac 1t \int_0^t \d s \,\delta_{W^{\ssup i}_s} 
$$
denotes the normalized occupation measures of $W^{\ssup i}$ until time $t$, we can  write
$$
\d\widehat\P_t^\otimes\big(\cdot)
= \frac 1 {Z_t} \exp\big\{t H\big(L^{\otimes}_t\big)\big\} \,\,\d\P^{\otimes}(\cdot)
$$
where, for any probability measures $\mu^{\ssup 1}, \mu^{\ssup 2}\in \Mcal_1$ on $\R^3$, we denote by $\mu^{\otimes}=\mu^{\ssup 1}\otimes\mu^{\ssup 2}$ the product measure, and 
\begin{equation}\label{Hprod}
H\big(\mu^\otimes\big)=\int\int_{\R^3\times \R^3} V(x-y) \,\mu^{\ssup 1} (\d x) \mu^{\ssup 2}(\d y).
\end{equation}
Since we will be interested in functions $V(\cdot)$ which vanish at infinity, the interaction in the measures $\widehat\P_t^\otimes$ are mutually attractive. 
In particular, for certain ``singular potentials" $V$, these models  are often motivated by statistical mechanics. 
In the present article, we will study some of these models and analyze {\it{joint behavior}} of two
mutually interacting Brownian paths under Gibbs transformations of the form $\widehat\P_t^\otimes$.
Let us take a closer look at the particular models under interest.

\subsection{Dirac interaction in $\R^3$.} \label{intro-sec-Dirac}
Let $V$ be given by the Dirac measure $\delta_0$ at zero in $\R^3$. Then the path measures corresponding to 
\eqref{pathmeasure} can be written formally as
\begin{equation}\label{Qt}
\begin{aligned}
\d\mathbb Q_t&= \frac 1 {Z_t} \exp\bigg\{\frac 1t \int_0^t \int_0^t \d\sigma\, \d s\,\,\delta_0\big(\omega^{\ssup 1}_\sigma- \omega^{\ssup 2}_s\big)\bigg\} \,\d\P^{\otimes}\\
&= \frac 1 {Z_t} \exp\bigg\{\frac 1t\prod_{i=1}^2 \int_0^t \d s\,\, \delta_0(\omega^{\ssup i}_s)\bigg\}  \,\d\P^{\otimes}
\end{aligned}
\end{equation}
We remark that the interaction present in the above exponential weight is clearly {\it{self-attractive}}.  We also remark  that the factor $\frac 1t$ in the exponent in \eqref{Phat} makes the model interesting. Indeed, the double-integral in the exponent is of order $t^2$ for paths that intersect (possibly at different times), and the entropic cost for this behavior is $\e^{-O(t)}$; it is relatively easy to suspect that such a behavior is typical under the transformed measure. Hence, it is the factor $\frac1t$ that makes the energy and the entropy terms run on the same scale and still gives the path enough freedom to fluctuate.

We also that the exponential weight (ignoring the $1/t$ factor) appearing above is also intimately related to a random measure in $\R^3$, which can be written symbolically as 
\begin{equation}\label{ellt}
\ell_t(A)= \int_A\d y \,\, \prod_{i=1}^2 \int_0^t \d s \,\, \delta_y(W^{\ssup i}_s) \qquad A\subset \R^d.
\end{equation}
Then the exponential weight (modulo the $1/t$ factor) in \eqref{Qt} is the ``density" (w.r.t. the Lebesgue measure) of $\ell_t$ at zero.
Note that $\ell_t$ is a measure which is supported on the random set 
\begin{equation*}
S_t=\bigcap_{i=1}^2 W^{\ssup i}_{[0,t]}= \bigg\{x\in \R^3\colon\, x=W^{\ssup 1}(\sigma)= W^{\ssup 2}(s)\,\,\,\mbox{ for some } \sigma, s\in [0,t]\bigg\}
\end{equation*}
of intersections of the paths $W^{\ssup 1}$ and $W^{\ssup 2}$ until time $t$. It is well-known that $S_t$ is non-empty, has measure zero and Hausdorff dimension one in $\R^3$ (see \cite{DE00a}).
Note that $\ell_t$ formally counts the intensity of intersections of the two paths until time $t$ and is called the {\it{intersection local time}} or the {\it{Brownian intersection measure}} of $W^{\ssup 1}$ and $W^{\ssup 2}$.

We remark that both \eqref{Qt} and \eqref{ellt} are only formal expressions in $\R^3$. 
In one spatial dimension, these formulas can be made rigorous since the Brownian 
occupation measures of the motions have almost surely a Lebesgue density (the Brownian local time),
which is jointly continuous in the space and the time variable, and $\ell_t$ can be defined as the measure with a density
given by the pointwise product of the two occupation densities. However, in higher dimensions, occupation measures
become singular and Brownian intersection measures are difficult to work with. On a rigorous level, 
an implicit construction was carried out by Geman, Horowitz and Rosen (\cite{GHR84}) by defining $\ell_t$ as the projected local time of the {\it{confluent Brownian motion}} at zero.
Furthermore, Le Gall \cite{LG86} identified $\ell_t$ as a renormalized limit of the Lebesgue measure supported on the
intersection of the corresponding {\it{Wiener sausages}}.

In the asymptotic regime $t\to\infty$, tail behavior of the total mass $\{\ell_t(\R^3)> at^2\}$, for $a>0$, has appeared in the work of K\"onig and M\"orters (\cite{KM01}) and Chen (\cite{Ch03}),
using techniques based on fine analysis of high moments of the total mass $\ell_t(\R^3)$ 
(analyzing moments of the form $\E^\otimes\big[\ell_t(\R^3)^k\big]$ for large $k$). However, these asymptotic moment analysis 
does not allow large deviation treatment of $\ell_t$ as a {\it{measure}}.
 Such a large deviation analysis was carried out in \cite{KM13} 
for the distributions of the rescaled measures $t^{-2}\ell_t$, conditional on restricting both Brownian motions in a fixed compact set in $\R^3$.
 However, one fundamental first step in the proof of \cite{KM13} was based on classical Donsker-Varadhan theory of weak large deviations (\cite{DV75-1}-\cite{DV83-4}) leading to the stringent assumption that the motions do not leave a given bounded set until a large time and is killed upon exiting the bounded domain. Another important technical step
in \cite{KM13} was based on the exponential approximation of the intersection measure by its smoothened (mollified) version 
based on the spectral theorem and eigenvalue expansion of $-\frac 12 \Delta$ with zero boundary condition on the given bounded set.
Note that both these techniques fail when the restriction to a compact set is dropped and both Brownian motions are allowed to run freely in the full space $\R^3$.

The methods developed in this article allow a direct treatment of the measures $\ell_t$ in the {\it{full space}} $\R^3$, and proves
sharp asymptotic localization properties of the distributions of the path measures $t^{-2}\ell_t$ under the Gibbs transformations of the form \eqref{Qt}, 
see Theorem \ref{thm2-LDPBrowInt} for a precise statement. Our approach does not rely on classical Donsker-Varadhan theory, and is based on a robust theory of compactness, which builds on and extends the theory introduced in \cite{MV14}, see Section \ref{sec-heur-compact} for a heuristic discussion.

\subsection{Coulomb interaction.}\label{intro-sec-bipolaron}

Let us now consider the case of a Coulomb potential $V(x)=|x|^{-1}$ in $\R^3$ appearing in
\eqref{pathmeasure}. To motivate our work, let us 
first focus on existing work  on the study a path measure $\widehat\P_t$ defined as in \eqref{pathmeasure} with respect to a {\it{single Brownian path}} $W^{\ssup 1}= W^{\ssup 2}=W$, i.e., 
 \begin{equation}\label{Phat}
 \begin{aligned}
\widehat{\P}_t(\d \omega)&=\frac 1 { Z_t}\, \exp\bigg\{\frac 1t\int_0^t\int_0^t \d \sigma\d s \,\frac 1{\big|\omega_\sigma-\omega_s\big|}\bigg\} \, \P(\d \omega) \\
&= \frac 1 { Z_t}\, \exp\big\{t H(L_t)\big\} \, \d \P,
\end{aligned}
\end{equation} 
where $L_t=\frac 1t\int_0^t\d s\, \delta_{W_s}$ stands for the normalized occupation measure, for which $H(L_t)=\int\int_{\R^3\times\R^3} |x-y|^{-1} L_t(\d x) L_t(\d y)$ and $Z_t=\E[\exp\{t H(L_t)\}]$, and $\P$ refers to 
the three dimensional Wiener measure for $W$. 

The infinite volume limit of $\widehat\P_t$ as well as the distributions $\widehat\P_t\circ L_t^{-1}$ have been completely analyzed in a series of recent work (see \cite{MV14}, \cite{KM15} and \cite{BKM15}) .
The main impetus for this work came from the {\it{polaron problem}} in quantum statistical mechanics, where one is interested in the behavior of a charged particle 
(an electron) whose movement is described by the measures
\begin{equation}\label{polaron}
\widehat\P_{\lambda,t}(\d \omega)= \frac 1 {Z_{\lambda,t}} \exp\bigg\{\lambda \int_0^t\int_0^t \d\sigma \d s \frac{\e^{-\lambda|\sigma-s|}}{|\omega_\sigma- \omega_s|}\bigg\} \,\, \P(\d \omega),
\end{equation}
where $\lambda=\alpha^{-2}>0$ is a parameter and $Z_{\lambda,t}$ is the normalization constant or partition function. 
The physically relevant regime is the  {\it{strong coupling limit}} as $\alpha\to \infty$, i.e., $\lambda\to0$.
The asymptotic behavior of the partition function $Z_{\lambda,t}$ in the limit $t\to\infty$, followed by $\lambda\to0$, was rigorously studied  by Donsker and Varadhan (\cite{DV83-P}),
 proving the following variational formula for the free energy: 
\begin{equation}\label{Pekarfor}
 \begin{aligned}
\lim_{\lambda\to 0}\lim_{t\to\infty}\frac 1t\log Z_{\lambda,t}&= \lim_{t\to\infty}\frac 1t\log Z_t \\
&=\sup_{\heap{\psi\in H^1(\R^3)}{\|\psi\|_2=1}} 
\Bigg\{\int_{\R^3}\int_{\R^3}\d x\d y\,\frac {\psi^2(x) \psi^2(y)}{|x-y|} -\frac 12\big\|\nabla \psi\big\|_2^2\Bigg\},
\end{aligned}
\end{equation}
where $Z_t$ is the normalizing constant in \eqref{Phat}
and $H^1(\R^3)$ denotes the usual Sobolev space of square integrable
functions with square integrable gradient. The variational formula in \eqref{Pekarfor} was analyzed by Lieb (\cite{L76}) with the result that there is a rotationally symmetric maximizer $\psi_0$, which is unique modulo spatial shifts. 
Given the coincidental behavior \eqref{Pekarfor} of the partition functions, one believes that the polaron measure $\widehat\P_{\lambda,t}$ to be 
approximated by $\widehat\P_t$ for $\lambda\to 0$. {\footnote{Note that the interaction appearing in \eqref{polaron} is self-attractive, the measure favors paths which, at least for fixed $\lambda>0$ and short time scales,
tend  to clump together. However, for the $\lambda\sim 0$ (i.e., strong coupling regime), this attractive effect gets more and more smeared out 
and the behavior of the actual path measures $\widehat\P_{\lambda,t}$ is then believed to be an approximation of the limiting asymptotic behavior of $\widehat\P_t \circ L_t^{-1}$, see the 
discussion in Section 1.1 in \cite{BKM15}, as well as (Spohn \cite{Sp87}).}} This quest led to a rigorous analysis 
of $\widehat\P_t$ and the first fundamental result (\cite{MV14}) showed that the asymptotic distribution of $L_t$ under $\widehat\P_t$
concentrates around the maximizers $\mathfrak m=\{\psi_0^2\star\delta_x\colon x \in \R^3\}$ of the variational problem \eqref{Pekarfor}, i.e., 
\begin{equation}\label{tubeweak}
\limsup_{t\to\infty}\frac 1t \log \widehat\P_t\big\{L_t\notin U(\mathfrak m)\big\}<0.
\end{equation}
This result was further sharpened in \cite{BKM15} showing that identifying the exact limiting distributions of $\widehat\P_t\circ L_t^{-1}$ as an explicit mixture of the maximizers in $\mathfrak m$,
thus, contributing on a rigorous level to the aforementioned 
heuristic understanding of the ``mean-field approximation of the polaron problem" on the level of path measures.

Let us now turn to the measures $\widehat\P_t^\otimes$  pertaining two independent Brownian motions $W^{\ssup 1}$ and $W^{\ssup 2}$, 
defined in \eqref{pathmeasure} for $V(x)=1/|x|$. This is related to the study of {\it{bi-polarons}} in statistical mechanics, see the work of Miyao-Spohn (\cite{MS07}) and Frank-Lieb-Seiringer (\cite{FLS13}).
The model is written by the path measures
\begin{equation}\label{bipolaron}
\begin{aligned}
\d\,\,\widehat\P_{\lambda, u, t}^\otimes&= \frac 1 {Z_{\lambda,u,t}} \exp\bigg\{\sum_{i,j=1}^2 \bigg(\frac\lambda 2 \int_0^t\int_0^t \d\sigma \d s \frac{\e^{-\lambda|\sigma-s|}}{|\omega^{\ssup i}_\sigma- \omega^{\ssup j}_s|}\bigg) \\
&\qquad\qquad\qquad\qquad\qquad\qquad- u \bigg(\sum_{i<j} \int_0^t \d s \frac 1{\big|\omega^{\ssup i}- \omega^{\ssup j}_s\big|}\bigg)\bigg\} \,\,\d\P^\otimes 
\end{aligned}
\end{equation}
with $\lambda,u>0$. These measures describe the motion of two electrons coupled to an ionic crystal, and unlike the case of a single polaron, 
the interaction in \eqref{bipolaron} carries both attractive (contributions coming from the first term) and repulsive (the Coulomb 
force appearing in the second term) effects. Again for strong coupling regime, the behavior of the partition function $\lim_{\lambda\to 0}\lim_{t\to\infty} Z_{\lambda,u,t}= E(u)$
is expressed by the {\it{Pekar-Thomasevich}} energy functional (\cite{MS07}, \cite{FLS13}), and $E(u)$ determines, depending on the value of $u$, if in 
the lowest energy states two electrons form a bound pair, or they split into two well-separated polarons. Again questions concerning the behavior 
of the actual path measures $\widehat\P_{\lambda, u, t}^\otimes$ remain widely open. However, in \eqref{bipolaron}, intuitively one believes that
for small $\lambda$ and fixed $u$, the attractive interaction in the first term in the exponential should get more and more smeared out and again should approach the mean-field model
$$
\frac 1{2t}\sum_{i,j}\int_0^t\int_0^t \d \sigma\d s \,\frac 1{\big|\omega^{\ssup i}_\sigma-\omega^{\ssup j}_s\big|}, 
$$
that corresponds to the interaction appearing in $\widehat\P_t^\otimes$, recall \eqref{pathmeasure}. Motivated by this intuition,  
our first result shows that the asymptotic distribution of the product occupation measures $L_t^\otimes$ under $\widehat\P_t^\otimes$ is attracted by the ``infinite tube" of spatial shifts of a centered maximizer of a variational formula similar to \eqref{Pekarfor},
being in analogy to the fundamental result \eqref{tubeweak}. The precise statement 
of this result can be found in Theorem \ref{thmpairtube}. Given the understanding of a single polaron $\widehat\P_{\lambda,t}$
via its mean-field approximation $\widehat\P_t$ as we discussed before, it is conceivable that this result will lead to a clearer understanding of the bipolaron problem on the level of
path measures.

\subsection{Parabolic Anderson model with white noise potential.}\label{intro-sec-PAM}
The third motivation of our work comes from studying the moments of the approximating solutions to the ill-defined stochastic partial differential equation
\begin{equation}\label{pamintro}
\partial_t Z= \frac 12 \Delta Z+ \eta Z
\end{equation}
for spatial white noise potential $\eta$ in $\R^3$, i.e., $\eta$ is Gaussian noise in $\R^3$ with covariance kernel 
$\E(\eta(x) \, \eta(y))= \delta_0(x-y)$. This is called the spatial {\it{parabolic Anderson problem}}. 
A rigorous construction of this ill-posed equation has been carried out in \cite{HL15} based on the robust 
theory of regularity structures (\cite{H14}). 

On the other hand, when $\eta$ is given by a more regular random potential, the parabolic Anderson model \eqref{pamintro}
 is well-defined and has been extensively studied recently; see the monograph (K\"onig \cite{K16}) for a comprehensive summary of these results, concepts of the proofs, 
and much more related material. In particular, for the spatially continuous situation, when $\eta$ is a regular shift-invariant 
potential (Gaussian or Poisson shot noise) the second order asymptotics of the $p$-th moment ($p\geq 1$)
of $Z(t,0)$ as $t\to\infty$, as well as the almost sure asymptotics of the second order term of $Z(t,0)$ have been determined (\cite{GK00}, \cite{GKM00}),
see also the related work of Cranston-Mountford-Siga (\cite{CMS02}) and Cranston-Mountford (\cite{CM06}).
A prominent feature exhibited by this model is a strong localization effect, called {\it{intermittency}}: the random path has an inherent propensity to be confined
to some few small islands in the random medium, which are widely spread, and the main mass of the solution is built up in these islands. Therefore, the global 
property of this system is determined by the local extremes of the random field, rather than an averaging behavior coming from a central limit theorem. 
On a technical level, the proofs of these results are crucially based on a Feynman-Kac  representation of the solutions in terms of an exponential integral w.r.t. the Brownian path,
and applying classical (weak) Donsker-Varadhan large deviations for the distributions of local times of the path restricted to a large bounded box.

When $\eta$ represents a singular white noise potential in space, the method presented in this article allows a direct computation of the annealed (i.e., averaged over the noise) moments of a smoothened and rescaled version
of \eqref{pamintro} , as the smoothing parameter goes to zero. These Lyapunov exponents admit explicit variational formulas and a strict ordering
of these exponents underlines the aforementioned intermittent behavior of the smoothened solutions, see Theorem \ref{Lyapexp} and Lemma \ref{pam-lemma}.
Our proof techniques  are not reliant on classical Donsker-Varadhan theory and are based on a robust theory of ``compact large deviations" developed in Section \ref{sec-paircompact}. 
It is relatively easy to see that in case of a more regular shift-invariant random potential $\eta$ 
(and thus, a well-defined model \eqref{pamintro} that requires no mollification procedure),
the same arguments appearing in the proof of Theorem \ref{Lyapexp} lead to the asymptotic analysis of the path behavior of the model \eqref{pamintro} as $t\to\infty$ 
that recover the main assertions of \cite{GK00} and \cite{GKM00} (without restricting paths to any bounded domain).

\subsection{An essential ingredient: Compactness and strong large deviations:} \label{sec-heur-compact}
For studying asymptotic behavior of path behavior of Gibbs measures defined on interacting Wiener paths, it is desirable to have
a {\it{full large deviation principle}} for the distributions of occupation measures $L_t$ of a 
Brownian motion in the space $\Mcal_1(\R^3)$ of probability measures. Note that this space is non-compact in the usual weak topology and 
in absence of positive recurrence, classical theory of large deviations is restricted only to compact subsets of $\Mcal_1(\R^3)$.
Such a problem resisted a full resolution for studying statistical mechanical models on the level of path measures for a long time until a robust theory of  `` compactification"  was
introduced in \cite{MV14} for models that are inherently shift-invariant. For instance, note that for the measures $\widehat\P_t$ in \eqref{Phat}, 
\begin{equation}\label{shiftinv}
H(L_t)= \int\int V(x-y) \, L_t(\d x) \,L_t(\d y)=H(L_t\star \delta_z) \qquad\forall \, z\in \R^3.
\end{equation}
For models that enjoy such shift-invariance, it is natural to compactify the quotient space $\widetilde\Mcal_1(\R^3)$ of orbits $\widetilde\mu=\{\mu\star\delta_x\colon \, x\in \R^3\}$ 
probability measures under translations, and a full large deviation principle for the distributions of the orbits $\widetilde L_t$ embedded in that compactification
allows one to prove assertions on path behavior of Gibbs measures, thanks to the shift-invariance property \eqref{shiftinv}.

In the present context, for our main assertionsconcerning the models introduced in Section \ref{intro-sec-Dirac}-\ref{intro-sec-PAM}, 
it is tempting to appeal to the methods developed in \cite{MV14}. However, note that due to the ``mixed product" 
of two different measures in \eqref{Hprod}, the crucial translation-invariance of \eqref{shiftinv} is somewhat lost:  $H(\mu\otimes \nu)= \int\int V(x-y)\mu(\d x)\nu (\d y)$ might fail to be equal to 
$$
H\big(\,(\mu\star \delta_{z_1})\otimes (\nu\star\delta_{z_2}) \,\big)
$$ 
if $z_1\ne z_2$. Hence, such a mixed product disallows a straightforward product measure compactification of $\widetilde\Mcal_1(\R^3) \, \otimes \widetilde\Mcal_1(\R^3)$
and we are not entitled to invoke the shift-invariant theory developed in \cite{MV14}.


However, the central idea of \cite{MV14} inspires the following resolution. Note that any sequence of probability measures 
in $\R^d$ might fail to have a convergent subsequence in the usual weak topology, since masses may escape in different
directions (e.g., $\mu_n= \frac 12 \delta_n + \frac 12 \delta_{-n}$ with $n\to\infty$), or a sequence of measures could totally disintegrate into dust (like a centered Gaussian with a large variance).
Starting with any arbitrary sequence $\mu_n $ of probability measures,
the first step is to identify local regions in $\R^d$ where $\mu_n$ and $\nu_n$ have their accumulations of masses. This can be written by the concentration function
$q_{\mu_n}(r)= \sup_{x\in \R^d} \mu_n\big(B_r(x)\big)$ of $\mu_n$.
By choosing subsequences, we can assume that the limits $q_{\mu_n}(r)\to q_\mu(r)$ as $n\to\infty$ and $q_\mu(r)\to p_1 \in [0,1]$ as $r\to\infty$
exist. Restricting $\mu_n$ to such a suitable concentration region, we can take a shift $\mu_n\star \delta_{a_n}$ which converges vaguely along some subsequence to a sub-probability measure $\alpha_1$ of mass $p_1$. We can peel off a measure of mass $\approx p_1$ from $\mu_n$ and repeat the above process now for the leftover to get convergence again along a further subsequence. We can go on recursively and end up with a picture where $\mu_n$ concentrates roughly on 
{\it{widely separated}} compact pieces of masses $\{p_j\}_{j\in\N}$ with $p_j\downarrow 0$ and $\sum_j p_j \leq 1$, while the rest of the mass $1-\sum_j p_j$ leaks out. 

In an exactly similar manner, for an independent sequence of measures $\nu_n$, 
we can also visualize a similar concentration in widely separated compact pieces of masses $\{q_l\}_{l\in \N}$ 
(with a remaining mass $1-\sum_l q_{l}$ being dissipated). 
However, concentration regions of $\mu_n$, and that of and $\nu_n$, are a-priori completely independent and hence, are also possibly mutually widely separated. 
This is a problem that one encounters to recover any partial mass (in the limit) for the product sequence $\mu_n\otimes\nu_n$.

In our desired compactification, we will be interested in a topology where functionals $\mu\otimes\nu\mapsto \int V(x-y) \mu(\d x) \nu(\d y)$ are continuous,
with test functions $V$ vanishing at infinity. Hence, it is conceivable that, one can hope to recover any partial mass from the product sequence $\mu_n\otimes\nu_n$, only if some concentration region 
of $\mu_n$ happens to have be in a bounded distance from some concentration region of $\nu_n$. Since there is wide separation between each individual components of $\mu_n$ and 
individual components of $\nu_n$, such ``matchings" could take place only pairwise. Since the test functions $V$ vanish at infinity, 
any other mutually distant (and ``unmatched") pair of components can not contribute. We end with a picture where the product sequence $\mu_n\otimes\nu_n$ roughly concentrates on 
matched pairs of islands, so that within each pair, two components are within bounded distance, while the pairs are mutually widely separated, with a certain dissipation of mass
coming from unmatched pairs. The space of all such matched pairs of components will be in our desired  ``compactification". {\footnote{For example, let $\mu_n$ be a sequence which is a mixture of three Gaussians, one with mean $0$ and variance $1$, one with mean $n$ and variance $1$ and one with mean $0$ and variance $n$, each with equal weights $1/3$. On the other hand, let $\nu_n$ also be a mixture of three Gaussians, one with mean $n^2$ and variance $1$, one with mean $n+1$ and variance $1$ and one with mean $0$ and variance $n$, also with equal weights $1/3$. Then the limiting object for $\widetilde{\mu_n\nu_n}$ is the single orbit $\{\widetilde{\alpha_1\beta_2}\}$, where $\alpha_1$ is the a Gaussian with mean $0$, variance $1$ and mass $1/3$, $\beta_2$ is a Gaussian with mean $1$, variance $1$ and mass $1/3$, while $\widetilde{\alpha_1\beta_2}$ is the equivalence class of the product of these two Gaussians with mass $1/9$.}

In such a compactification, due to mutual attraction, two independent Brownian paths tend to find such matched pairs of islands and stick together by treating
each pair as one bigger island. Since such bigger islands are mutually distant, an optimal strategy rules out any interaction 
between them, leading to asymptotic independence and a full large deviation principle for distribution of orbits 
$\{L_t^{\otimes} \star \delta_x\colon x \in \R^d\}$
embedded in the compactfication, the rate function simply being
the sum of classical Donsker-Varadhan rate functions on each such island. This  heuristic idea will be a key recipe in our study.

Let us summarize the organization for the rest of the article. In Section \ref{sec-paircompact}, we describe a precise layout for the compactifcation procedure described above.
Section \ref{sec-BrowInt} is devoted 
to the study of path measures with respect to Dirac type interactions in $\R^3$ discussed in Section \ref{intro-sec-Dirac}. 
It is worth pointing out that, due to the singular nature of the Dirac interactions in three dimensions, 
the machinery developed in Section \ref{sec-paircompact} is a priori not applicable to the measures appearing for this model.
and one needs substantial work to show that, after a suitable mollification procedure, such singularities can be tamed down
on an exponential scale, see Section \ref{sec-BrowInt-proof}.
In Section \ref{sec-pairpolaron} we prove localization of path measures for the Coulomb interaction introduced in Section \ref{intro-sec-bipolaron},
and in  \ref{sec-PAM}, we compute the annealed Lyapunov exponents of the parabolic Anderson problem introduced in Section \ref{intro-sec-PAM}.

\section{Shift invariant compactification for product measures}\label{sec-paircompact}

In this section we will work with any arbitrary spatial dimension $d$ and write $\Mcal_1= {\Mcal_1}(\R^d)$ the space of probability distributions on $\R^d$,
tacitly equipped with the weak topology which dictates a 
sequence $\mu_n$ in $\Mcal_1$ to converge to $\mu$,  denoted by  $\mu_n\weak\mu$,  if
\begin{equation}\label{weakconv}
\lim_{n\to\infty}\int_{\R^d} f(x)\mu_n(\d x)=\int_{\R^d} f(x) \mu(\d x),
\end{equation}
for all bounded continuous functions on $\R^d$.  If we denote by $\Mcal_{\leq 1}$ the space of all sub-probability measures (non-negative measures with total mass less than or equal to one), then 
the same topology carries over to this space with the same requirements. In Section \ref{sec-2.1} we will introduce a ``diagonal shift-invariance" property of the product space $\Mcal_1^\otimes=\Mcal_1\otimes\Mcal_1$ 
of probability measures and describe certain continuous functionals on the corresponding quotient space $\widetilde\Mcal_1^\otimes$ under that shift-invariant action. In Section \ref{sec-2.2} we will introduce a metric 
space $\widetilde{\mathcal X}^\otimes\hookleftarrow\widetilde\Mcal_1^\otimes$ which is indeed the compactification and completion of the totally bounded quotient space $\widetilde\Mcal_1^\otimes$, see Theorem \ref{thmcompact}. In Section \ref{sec-compactldp} we will prove a strong large deviation principle for the orbits of product occupation measures $L_t^\otimes$ embedded in that compactification.


\subsection{Diagonal shift-invariance on product measures.}\label{sec-2.1}
We turn to the product space 
$$
\Mcal_1^\otimes=\Mcal_1\otimes \Mcal_1
$$ 
and its quotient space
$$
\begin{aligned}
\widetilde\Mcal_1^\otimes&={(\Mcal_1\otimes\Mcal_1)}\big/ \sim\\
&= \big\{(\mu\star\delta_x)\otimes(\nu\star\delta_x)\colon x \in \R^d\big\}
\end{aligned}
$$
of {\it{diagonal orbits}} under translations by $\R^d$. 
For notational convenience, we will write $\mu\nu= \mu\otimes \nu$ for typical elements of $\Mcal_1^\otimes$ and 
$\widetilde{\mu\nu}$ for its  diagonal orbit in $\widetilde\Mcal_1^\otimes$. 
The goal of this section is to provide with a compactification for the space $\widetilde\Mcal_1^\otimes$, which is a priori 
non-compact under the product and quotient operations and the weak topology inherited from $\Mcal_1$.

\noindent First we need to identify a class of continuous functionals on this space.
 For any $k\geq 1$, we denote by $\mathcal F_k^{\otimes}$ the space of continuous functions $f: (\R^{2d})^k \rightarrow \R$ that vanish at infinity in the sense 
\begin{equation}\label{vanishinfty}
\lim_{r\to\infty} f\big(x_1,y_1,\dots,x_k,y_k\big)=0.
\end{equation}
where 
$r$ is the diameter given by $\max_{i,j=1,\dots,k}\big\{|x_i-y_j|, |x_i-x_j|, |y_i-y_j|\big\}$.
Furthermore, these functions should be {\it{diagonally
translation-invariant}} in the sense 
$$
\begin{aligned}
f\big(x_1,y_1,\dots,x_k,y_k\big)&= f\big(x_1+a,y_1+a,\dots,x_k+a,y_k+a\big)\\
&\qquad\qquad\qquad\forall\,\,(x_j, y_j)\in \R^{2d},\,\, a\in \R^d, \, j=1,\dots, k
\end{aligned}
$$

 We will often use a typical function $f\in\mathcal F_1^{\otimes}$ of the form
 $f(x_1,y_1)=V(x_1-y_1)$ for some continuous function $V(\cdot)$ vanishing at infinity. 
 Denote by $\mathcal F^{\otimes}=\cup_{k\geq 1} \mathcal F_k^{\otimes}$
 the countable union. 
 
 \subsection{Compactification with respect to diagonal shift-invariance.}\label{sec-2.2}
  We define the space
  \begin{equation}\label{Xotimes}
 \begin{aligned}
 {\widetilde{\mathcal X}^\otimes}= \bigg\{\xi^{\otimes}\colon \, \xi^{\otimes}=\big\{\widetilde{\alpha_i\beta_i}\big\}_{i\in I}, \,\,\,\alpha_i,\beta_i\in \Mcal_{\leq 1}.
 \sum_i \alpha_i(\R^d)\leq 1, \sum_i \beta_i(\R^d)\leq 1 \bigg\}
 \end{aligned}
 \end{equation}
 of all empty, finite and countable collection of diagonal orbits  $\xi^\otimes=\{\widetilde{\alpha_j\beta_j}\}_j$ 
 of products of measures with their masses $\sum_j \alpha_j(\R^d)$ and $\sum_j \beta_j(\R^d)$ 
 adding up to most one. Note that, we can canonically identify elements of $\widetilde{\Mcal}_1^{\otimes}$ in the space $\widetilde{\mathcal X}^\otimes$. 
 
We would like a metric on the space  $\widetilde{\mathcal X}^\otimes$. 
We set, for any $\xi^\otimes\in \widetilde{\mathcal X}^\otimes$, any $f\in \mathcal F^\otimes$,
$$
\Lambda^\otimes(f,\xi^\otimes)= 
\sum_{\widetilde{\alpha\beta}\in\xi^\otimes}\int f(x_1,y_1\ldots,x_k,y_k)\prod_{i=1}^n\big(\alpha\beta\big)(\d x_i \d y_i)
$$
Due to shift-invariance of $f\in\mathcal F^\otimes$, each summand in the above expression depends only on the orbit $\widetilde{\alpha\beta}= \{(\alpha\star\delta_x)\otimes(\beta\star \delta_x) \colon x \in \R^d\}$. 
We fix a countable sequence  of functions $\{f_r(x_1y_1,\ldots, x_{k_r}, y_{k_r})\}_{r\in \N}$ which is dense in $\mathcal F^\otimes$. For any $\xi_1^\otimes,\xi_2^\otimes\in {{\widetilde{\mathcal X}^\otimes}}$, we then 
define
\begin{equation}\label{Ddef}
\mathbf D^\otimes\big(\xi_1^\otimes,\xi_2^\otimes\big)=\sum_{r=1}^\infty\frac{1}{2^r}\frac{1}{1+\|f_r\|_\infty}\,\, \big| \Lambda^\otimes(f_r,\xi_1^\otimes)- \Lambda^\otimes(f_r,\xi_2^\otimes)\big|.
\end{equation}
To see that $\mathbf D^\otimes$ is a metric on ${\widetilde{\mathcal X}^\otimes}$, note that $\mathbf D^\otimes$ clearly satisfies the triangle inequality as well as positivity. 
The proof of the fact that $\mathbf D^\otimes\big(\xi_1^\otimes,\xi_2^\otimes\big)=0$ implies $\xi_1^\otimes=\xi_2^\otimes$ is non-trivial and lengthy, but follows the same line of arguments modulo slight modifications as Theorem 3.1 in \cite{MV14}.  Note that for any $f\in \mathcal F^\otimes$, $\Lambda^\otimes(f, \cdot)$ is a natural continuous functional on the space $\widetilde{\mathcal X}^\otimes$, in the metric
$\mathbf D^\otimes$, the sequence $(\xi_n^\otimes)_n$ converges to $\xi^\otimes$ in the space ${\widetilde{\mathcal X}^\otimes}$, if the sequence
\begin{equation}\label{Dconverge}
\begin{aligned}\Lambda^\otimes(f,\xi_n^\otimes)
&=\displaystyle\sum_{(\widetilde{\alpha_n\beta_n})\in\xi_n^\otimes}\int f(x_1,y_1\ldots,x_k,y_k)\prod_{i=1}^n\big(\alpha_n\beta_n\big)(\d x_i \d y_i)\\
&\to \sum_{\widetilde{\alpha\beta}\in\xi^\otimes}\int f(x_1,y_1\ldots,x_k,y_k)\prod_{i=1}^n\big(\alpha\beta\big)(\d x_i \d y_i)=\Lambda^\otimes(f,\xi^\otimes)
\end{aligned}
\end{equation}
for every $f\in\mathcal F^\otimes$.

The following statement is our main compactification result.

\begin{theorem}\label{thmcompact}
Given any sequence $(\widetilde{\mu_n\nu_n})_n$ in $\widetilde{\mathcal M}_1^\otimes$, there is a subsequence that converges to a limit in ${\widetilde{\mathcal X}^\otimes}$.
Furthermore, the set of orbits $\widetilde{\mathcal M}_1^\otimes$ is dense in ${\widetilde{\mathcal X}^\otimes}$. Hence ${\widetilde{\mathcal X}^\otimes}$ is a compactification of $\widetilde{\mathcal M}_1^\otimes$. It is then also the completion under the metric $\mathbf D^\otimes$ of the totally bounded space $\widetilde{\mathcal M}^\otimes$.
\end{theorem}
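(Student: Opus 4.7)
My proof proposal follows the strategy sketched heuristically in Section~\ref{sec-heur-compact}, adapting the concentration-function machinery of \cite{MV14} to the mutually interacting (non shift-invariant in each factor, only diagonally shift-invariant) setting. The proof splits into three parts: sequential compactness, density, and the deduction that $\widetilde{\mathcal X}^\otimes$ is the completion.

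\textbf{Sequential compactness.} Starting from $(\mu_n \nu_n)_n$, I would first apply the standard concentration-function procedure to the sequence $\mu_n$ alone. Set $q_{\mu_n}(r)=\sup_{x\in\R^d}\mu_n(B_r(x))$; by a diagonal extraction, pass to a subsequence along which $q_{\mu_n}(r)\to q_\mu(r)$ for rational $r$, and let $p_1=\lim_{r\to\infty}q_\mu(r)\in[0,1]$. If $p_1>0$, choose centers $a_n^{(1)}\in\R^d$ with $\mu_n(B_{r_n}(a_n^{(1)}))\to p_1$ for $r_n\uparrow\infty$ slowly; then $\mu_n\star\delta_{-a_n^{(1)}}$ has a vague subsequential limit $\alpha_1\in\Mcal_{\leq 1}$ of mass $p_1$. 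Peel off a restriction of $\mu_n$ of mass $\approx p_1$ localized near $a_n^{(1)}$, call the remainder $\mu_n^{(1)}$, and iterate to obtain widely separated centers $\{a_n^{(i)}\}$ and sub-probability limits $\alpha_i$ with masses $p_i\downarrow 0$ and $\sum_i p_i\le 1$. Apply the identical procedure independently to $\nu_n$, producing centers $\{b_n^{(j)}\}$ and limits $\beta_j$ of masses $q_j$. So far this is the \cite{MV14} scheme applied twice.

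\textbf{The matching step.} The crux of the argument, genuinely new here, is to extract pairs. For each $(i,j)$, examine the sequence $a_n^{(i)}-b_n^{(j)}\in\R^d$; by a further diagonal extraction, either $|a_n^{(i)}-b_n^{(j)}|\to\infty$ along the subsequence (call $(i,j)$ \emph{unmatched}), or $a_n^{(i)}-b_n^{(j)}\to c_{ij}\in\R^d$ (call $(i,j)$ \emph{matched}). The wide-separation inside the $\mu$-family and inside the $\nu$-family forces that each index $i$ can be matched with at most one $j$ and vice versa (if $i$ matched both $j$ and $j'$, then $b_n^{(j)}-b_n^{(j')}$ would be bounded, contradicting wide separation). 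Relabel the matched pairs as $(i,\sigma(i))$ for a partial bijection $\sigma$, and define $\widetilde{\alpha_i\beta_i}:=\widetilde{\alpha_i\otimes(\beta_{\sigma(i)}\star\delta_{c_{i,\sigma(i)}})}$ (an orbit representative); set the limiting element $\xi^\otimes:=\{\widetilde{\alpha_i\beta_i}\}_{i\in I(\sigma)}\in\widetilde{\mathcal X}^\otimes$. Now I need to prove $\Lambda^\otimes(f,\widetilde{\mu_n\nu_n})\to\Lambda^\otimes(f,\xi^\otimes)$ for every $f\in\mathcal F_k^\otimes$. Expand $\mu_n=\sum_i \mu_n^{[i]}+\rho_n^\mu$ (peeled pieces plus dissipating residual) and similarly for $\nu_n$; the $k$-fold integral against $f$ expands as a sum over choices of indices. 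Contributions in which any two chosen pieces are either residual, or come from an unmatched pair of indices, tend to zero because the diameter $r$ in \eqref{vanishinfty} diverges and $f$ vanishes at infinity. The only surviving terms are those in which every $\mu$-index $i$ is paired with its matched $\nu$-index $\sigma(i)$, and on each such pair vague convergence inside bounded regions combined with the diagonal shift-invariance of $f$ gives exactly $\Lambda^\otimes(f,\xi^\otimes)$.

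\textbf{Density, metric property, completion.} For density, given $\xi^\otimes=\{\widetilde{\alpha_i\beta_i}\}_{i\in I}\in\widetilde{\mathcal X}^\otimes$, choose widely separated shifts $z_n^{(i)}\in\R^d$ with $\min_{i\ne j}|z_n^{(i)}-z_n^{(j)}|\to\infty$, and form the probability measures $\mu_n=\sum_i(\alpha_i\star\delta_{z_n^{(i)}})+\gamma_n^\mu$ and $\nu_n=\sum_i(\beta_i\star\delta_{z_n^{(i)}})+\gamma_n^\nu$, where $\gamma_n^\mu,\gamma_n^\nu$ are uniform measures on balls of radius $R_n\to\infty$ centered far from all $z_n^{(i)}$ carrying the remaining mass. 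Then the diagonal orbit $\widetilde{\mu_n\nu_n}\to\xi^\otimes$ in $\mathbf D^\otimes$ by the same dissipation argument. The positivity of $\mathbf D^\otimes$ (i.e.\ $\mathbf D^\otimes(\xi_1^\otimes,\xi_2^\otimes)=0\Rightarrow\xi_1^\otimes=\xi_2^\otimes$) is the one part I would import directly, mutatis mutandis, from Theorem~3.1 of \cite{MV14}, choosing test functions $f\in\mathcal F^\otimes$ that localize around any putative point of difference. Sequential compactness plus density plus the metric property give that $(\widetilde{\mathcal X}^\otimes,\mathbf D^\otimes)$ is compact and contains $\widetilde{\mathcal M}_1^\otimes$ as a dense subset; in particular $\widetilde{\mathcal M}_1^\otimes$ is totally bounded, and $\widetilde{\mathcal X}^\otimes$ is its completion.

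\textbf{Main obstacle.} The delicate step is the matching argument together with the proof that unmatched and residual contributions really vanish for all $f\in\mathcal F_k^\otimes$ simultaneously. One must ensure that the diagonal extraction of subsequences for the countable family $\{f_r\}$ used in \eqref{Ddef}, for the concentration functions of $\mu_n$ and $\nu_n$, for the pieces of each, and for the relative centers $a_n^{(i)}-b_n^{(j)}$, can all be done coherently, and that the peeling procedure leaves a residual that concentrates no further. The shift-invariance built into $\mathcal F^\otimes$ is used crucially: it is exactly what allows one to center matched pairs without loss while keeping $\Lambda^\otimes$ well-defined on orbits.
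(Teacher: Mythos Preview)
Your proposal is correct and follows essentially the same route as the paper: concentration-function peeling applied separately to $\mu_n$ and $\nu_n$, a matching step based on whether the relative shifts $a_n^{(i)}-b_n^{(j)}$ stay bounded or diverge, and density via widely separated common shifts plus a totally disintegrating residual. The only cosmetic differences are that the paper uses Gaussians with large variance rather than uniform measures on large balls for the residual mass, and that you make the ``at most one match per index'' property explicit whereas the paper leaves it implicit.
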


\begin{proof} We  first show that any sequence $\widetilde{\mu_n\nu_n}$ in $\widetilde\Mcal_1^\otimes$ finds a subsequence which converges to some $\xi^\otimes$
in the metric $\mathbf D^\otimes$. Let us start with the {\it{concentration functions}}
of $\mu_n$ and $\nu_n$ given by
$$
Q_{\mu_n}(r)= \sup_{x\in \R^d} \mu_n (B_r(x)) \qquad Q_{\nu_n}(r)= \sup_{x\in \R^d} \nu_n (B_r(x)).
$$
We can assume that along some subsequences, $Q_{\mu_n}(r)\to Q_\mu(r)$ and $Q_{\nu_n}(r)\to Q_\nu(r)$ as $n\to \infty$. Furthermore, 
$Q_\mu(r)\to p_1$ and $Q_\nu(r)\to q_1$ as $r\to \infty$. 

Let us first assume that $p_1, q_1\in (0,1]$. Then there is a sequence of shifts $(a_n)\subset \R^d$ such that, for some $r>0$ and $n$ sufficiently large,
\begin{equation}\label{eq1thm}
\mu_n\big(B_r(a_n)\big) \geq p_1/2.
\end{equation}
Then we can decompse 
$$
\mu_n= \alpha_n + \mu_n^{\ssup 1}
$$ 
so that by Prohorov's theorem, along some subsequence, $\alpha_n\star \delta_{a_n}\weak \alpha^\prime$ for some 
$\alpha^\prime\in \Mcal_{\leq 1}$ and $a_n\in \R^d$. Recall that \lq$\weak$\rq denotes the usual weak convergence of (sub)-probability measures.
One should also think that the sequences $\alpha_n$ and $\mu_n^{\ssup 1}$ are ``widely separated" in the sense, 
$$
\int_{\R^{2d}} V(x-y)  \alpha_n(\d x) \mu_n^{\ssup 1}(\d y)= 0,
$$
for any continuous function $V$ vanishing at infinity (i.e., $V\in \mathcal F_1^\otimes$). 

We repeat the procedure with $\mu_n^{\ssup 1}\star \delta_{a_n}$. Since $\big(\mu_n^{\ssup 1}\star \delta_{a_n}\big)(\cdot) \leq \mu_n(\cdot)$, we conclude, by \eqref{eq1thm}, that
for every $r>0$, 
$$
\lim_{n\to\infty} Q_{(\mu_n^{\ssup 1}\star \delta_{a_n})} (r) \leq \min \big\{1-p_1/2, p_1\}.
$$
This iterative process could go on forever, or it might stop at a finite stage (i.e., when the recovered mass $p_{k+1}$, after stage $k$, happens to be $0$). If it stops at a finite stage, then we can write 
$$
\mu_n= \sum_{j=1}^k \alpha_n^{\ssup j}+ \gamma_n
$$
such that, for each $j=1,\dots,k$, along some subsequence, 
\begin{equation}\label{eq2thm}
\alpha_n^{\ssup j} \star \delta_{a_n^{\ssup j}} \weak \alpha_j^\prime,
\end{equation} 
and for $i\ne j$, $|a_n^{\ssup i}- a_n^{\ssup j}| \to \infty$, while the sequence
$\gamma_n$ totally disintegrates, i.e., for every $r>0$, 
$$
Q_{\gamma_n}(r)= \sup_{x\in \R^d} \gamma_n(B_r(x)) \to 0.
$$
It is easy to deduce that the above property implies that for such a totally disintegrating sequence $(\gamma_n)_n$ and 
any arbitrary sequence $(\eta_n)_n$ of measures in $\Mcal_{\leq 1}$, and for every $k\geq 1$ and $f\in \mathcal F^\otimes_k$,
\begin{equation}\label{disint}
\lim_{n\to\infty}\int_{\R^{2dk}} f(x_1,y_1\ldots,x_k,y_k) \prod_{i=1}^k\gamma_n(\d x_i)\prod_{i=1}^k \eta_n(\d y_i)=0.
\end{equation}
Let us now turn to the sequence $(\nu_n)$ in the product $(\mu_n\otimes \nu_n)$. In an exactly similar manner, we can write,
$$
\nu_n= \sum_{l=1}^m \beta_n^{\ssup l}+ \lambda_n
$$
such that, for each $l=1,\dots,m$, along some subsequence, 
\begin{equation}\label{eq3thm}
\beta_n^{\ssup l} \star \delta_{b_n^{\ssup l}} \weak \beta_l^\prime,
\end{equation}
and for $l\ne u$, $|b_n^{\ssup l}- b_n^{\ssup u}| \to \infty$, while the sequence
$\lambda_n$ totally disintegrates and satisfies a similar requirement \eqref{disint}. 
Let us now turn to the product
\begin{equation}\label{proddecomp}
\begin{aligned}
\mu_n\otimes \nu_n&=\bigg(\sum_{j=1}^k \alpha_n^{\ssup j}+ \gamma_n\bigg) \bigg(\sum_{l=1}^m \beta_n^{\ssup l}+ \lambda_n\bigg)
\\
 &=\sum_{j=1}^k\sum_{l=1}^m \alpha_n^{\ssup j}\otimes \beta_n^{\ssup l}+ \sum_{l=1}^m \beta_n^{\ssup l}\otimes\gamma_n+ \sum_{j=1}^k \alpha_n ^{\ssup j}\otimes \lambda_n
\end{aligned}
\end{equation}
Then \eqref{disint} implies that the last two summands on the right hand side will vanish when integrated with respect to any $f\in \mathcal F_k^\otimes$, i.e., 
\begin{equation}\label{eq4thm}
\begin{aligned}
&\lim_{n\to\infty}\int_{(\R^{2d})^k} f(x_1,y_1,\dots, x_k,y_k)\prod_{i=1}^k\alpha_n^{\ssup j}(\d x_i) \prod_{i=1}^k\lambda_n(\d y_i)=0 \qquad\forall j=1,\dots,k \\
&\lim_{n\to\infty}\int_{(\R^{2d})^k} f(x_1,y_1,\dots, x_k,y_k)\prod_{i=1}^k\beta_n^{\ssup j}(\d x_i) \prod_{i=1}^k\gamma_n(\d y_i)=0 \qquad\forall l=1,\dots,m.
\end{aligned}
\end{equation}
Hence we focus only on the first summand on the right hand side of \eqref{proddecomp}. For these products $\alpha_n^{\ssup j}\otimes \beta_n^{\ssup l}$, if for some $j\in \{1,\dots,k\}$ and $l\in \{1,\dots, m\}$, the distance of the shifts $|a_n^{\ssup j}- a_n^{\ssup l}|$ remains bounded, i.e, 
\begin{equation}\label{eq5thm}
|a_n^{\ssup j}- a_n^{\ssup l}| \leq c^{\ssup{jl}}\leq C <\infty,
\end{equation}
then we can find some common spatial shift $c^{\ssup{jl}}_n=c_n$ so that, again along sone subsequence, 
$$
\begin{aligned}
\big(\alpha_n^{\ssup j}\otimes \beta_n^{\ssup l}\big)\star \delta_{c_n}&=\big(\alpha_n^{\ssup j}\star c_n\big)\otimes \big(\beta_n^{\ssup l}\star c_n\big) \\
&\weak \alpha_j \otimes \beta_l,
\end{aligned}
$$
for some $\alpha_j, \beta_l \in \Mcal_{\leq 1}$. In other words, for any such pair $j\in \{1,\dots,k\}$ and $l\in \{1,\dots, m\}$, for every $V\in \mathcal F_1^\otimes$,
$$
\begin{aligned}
\int\int V(x-y) \alpha_n^{\ssup j}(\d x)\beta_n^{\ssup l}(\d y&)= \int\int V(x-y) \big(\alpha_n^{\ssup j}\star \delta_{c_n}\big)(\d x)\big(\beta_n^{\ssup l}\star \delta_{c_n}\big)(\d y) \\
&\to \int\int V(x-y)\alpha_j(\d x)\beta_l (\d y),
\end{aligned}
$$
and likewise, for any $f\in \mathcal F_k^\otimes$,
\begin{equation}\label{eq6thm}
\begin{aligned}
&\int_{\R^{2d k}} f(x_1,y_1,\dots,x_k,y_k)\prod_{i=1}^k\alpha_n^{\ssup j}(\d x_i)\prod_{i=1}^k\beta_n^{\ssup l}(\d y_i)
\\
&\to \int_{\R^{2dk}} f(x_1,y_1,\dots,x_k,y_k)\prod_{i=1}^k\alpha_j(\d x_i)\prod_{i=1}^k\beta_l (\d y_i).
\end{aligned}
\end{equation}
Let us finally turn to the case when $|a_n^{\ssup j}- b_n^{\ssup l}|\to \infty$, as $n\to\infty$ for any $j$ and any $l$. Then, again for any $V\in \mathcal F_1^\otimes$,
\begin{equation}\label{eq7thm}
\begin{aligned}
\int\int V(x-y) \alpha_n^{\ssup j}(\d x)\beta_n^{\ssup l}(\d y)&= \int\int V\big(x-y+a_n^{\ssup j}- b_n^{\ssup l}\big) \big(\alpha_n^{\ssup j}\star \delta_{a_n^{\ssup j}}\big)(\d x)\big(\beta_n^{\ssup l}\star \delta_{b_n^{\ssup l}}\big)(\d y) \\
&\to 0
\end{aligned}
\end{equation}
since $V$ vanishes at infinity. 

Let us now summaraize \eqref{eq4thm}, \eqref{eq6thm} and \eqref{eq7thm}, and recall the definition of the metric $\mathbf D^\otimes$ from \eqref{Ddef} and the requirement \eqref{Dconverge}. We conclude that
$$
\widetilde{\mu_n\nu_n} \longrightarrow \big\{\big(\widetilde{\alpha_j\beta_l}\big)_{j,l}\big\}\in \widetilde{\mathcal X}^\otimes.
$$

Let us now consider the case $p_1=0$ or $q_1=0$. If $p_1=0$, then, for every $r>0$,
$$
\lim_{n\to \infty} \sup_{x\in \R^d} \mu_n(B_r(x))= 0.
$$
and the sequence $\mu_n$ totally disintegrates, and by \eqref{disint}, the sequence $\widetilde{\mu_n\nu_n} \to 0$ in $\widetilde{\mathcal X}^\otimes$. Of course, the same conclusion holds when $q_1=0$.

Finally, let us prove that $\widetilde{\mathcal M}_1^\otimes$ is dense in ${\widetilde{\mathcal X}^\otimes}$. 
If we start with $\xi^\otimes= (\widetilde{\alpha_j\beta_j})_j\in {\widetilde{\mathcal X}^\otimes}$, we can choose $n$ large enough such that both $\sum_{j>n} \alpha_j(\R^d)$ and 
$\sum_{j>n} \beta_j(\R^d)$ are negligible and work with a sequence of spatial points $a_1,\dots,a_n\in \R^d$ 
so that $\inf_{i\ne j} |a_i -a_j|\to \infty$, and take
the convex combinations
\begin{equation}\label{cvxcomb}
\begin{aligned}
&\mu_n=\sum_{j=1}^n \alpha_j \star \delta_{a_j}+ \bigg(1- \sum_{j=1}^n \alpha_j(\R^d)\bigg)\lambda_M \\
&\nu_n=\sum_{j=1}^n \beta_j \star \delta_{a_j}+ \bigg(1- \sum_{j=1}^n \beta_j(\R^d)\bigg)\lambda_M,
\end{aligned}
\end{equation}
where $\lambda_M$ is a centered Gaussian measure with variance $M\gg 1$, so that it totally disintegrates as $M\to \infty$ (recall \eqref{disint}). Then, repeating similar arguments as before, we see that 
as $M$ grows large with $n$, the sequence $\widetilde{\mu_n\nu_n}$ converges in
$\widetilde{\mathcal X}^\otimes$ to $\xi^\otimes=\{\widetilde{\alpha_j\beta_j}\}$. This concludes the proof of Theorem \ref{thmcompact}.
\end{proof}

Let us finally remark that if any sequence $\widetilde{\mu_n\nu_n}$ in $\widetilde\Mcal_1^\otimes$ converges to some $\xi^\otimes=(\widetilde{\alpha_j\beta_j})_j$, then
$$
\int\int V(x-y)\mu_n(\d x)\nu_n(\d y) \rightarrow \sum_j \int\int V(x-y) \alpha_j(\d x) \beta_j(\d y).
$$
for any continuous function $V$ vanishing at infinity. Since $\widetilde\Mcal_1^\otimes$ is dense in $\widetilde{\mathcal X}^\otimes$, we have the following
\begin{cor}\label{corcont}
For any $V\in \mathcal F_1^\otimes$, the functional $H: \widetilde{\mathcal X}^\otimes \rightarrow \R$ defined by
$$
H(\xi^\otimes)=\sum_j \int\int V(x-y) \alpha_j(\d x) \beta_j (\d y) \qquad \xi^\otimes=(\widetilde{\alpha_j\beta_j})_j
$$
is continuous.
\end{cor}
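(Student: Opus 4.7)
My plan is to recognize that the functional $H$ is precisely the evaluation $\Lambda^\otimes(f_V,\cdot)$ against the diagonally translation-invariant test function $f_V(x_1,y_1)=V(x_1-y_1)$, so continuity of $H$ is a special case of the continuity of $\xi^\otimes\mapsto\Lambda^\otimes(f,\xi^\otimes)$ that is essentially built into the definition \eqref{Ddef} of the metric $\mathbf D^\otimes$. The task therefore reduces to two bookkeeping steps: first, verifying $f_V\in\mathcal F_1^\otimes$; second, promoting continuity from the dense countable family $\{f_r\}$ to all of $\mathcal F^\otimes$.

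For the first step, I would note that $f_V(x_1+a,y_1+a)=V(x_1-y_1)=f_V(x_1,y_1)$ gives the required diagonal translation invariance, and that $f_V(x_1,y_1)=V(x_1-y_1)\to 0$ as $|x_1-y_1|\to\infty$, which is exactly the vanishing condition \eqref{vanishinfty} in the case $k=1$ (the ``diameter'' then reduces to $|x_1-y_1|$). Hence $f_V\in\mathcal F_1^\otimes\subset\mathcal F^\otimes$, and by definition
\[
H(\xi^\otimes)=\sum_{\widetilde{\alpha\beta}\in\xi^\otimes}\int\!\!\int f_V(x,y)\,\alpha(\d x)\beta(\d y)=\Lambda^\otimes(f_V,\xi^\otimes).
\]

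For the second step, let $\xi_n^\otimes\to\xi^\otimes$ in $\mathbf D^\otimes$. Each term of the series defining $\mathbf D^\otimes$ is nonnegative, so convergence forces $|\Lambda^\otimes(f_r,\xi_n^\otimes)-\Lambda^\otimes(f_r,\xi^\otimes)|\to 0$ for every $r$. To upgrade this to an arbitrary $f\in\mathcal F^\otimes$ (and in particular to $f_V$), I would use the uniform bound
\[
\bigl|\Lambda^\otimes(f,\xi^\otimes)\bigr|\le \|f\|_\infty\sum_{j}\alpha_j(\R^d)\beta_j(\R^d)\le \|f\|_\infty,
\]
which is valid since $\sum_j\alpha_j(\R^d)\le 1$ and $\beta_j(\R^d)\le 1$ by the definition \eqref{Xotimes} of $\widetilde{\mathcal X}^\otimes$. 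Given $\varepsilon>0$, I pick $f_r$ in the dense sequence with $\|f_V-f_r\|_\infty<\varepsilon$ (density in the sup norm within $\mathcal F_1^\otimes$ suffices); then
\[
\bigl|\Lambda^\otimes(f_V,\xi_n^\otimes)-\Lambda^\otimes(f_V,\xi^\otimes)\bigr|\le 2\varepsilon+\bigl|\Lambda^\otimes(f_r,\xi_n^\otimes)-\Lambda^\otimes(f_r,\xi^\otimes)\bigr|,
\]
and letting $n\to\infty$ and then $\varepsilon\to 0$ yields continuity of $H$.

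The only conceptually delicate point is the uniform bound, which relies on the mass constraint encoded in \eqref{Xotimes}; once that is in hand, nothing else is needed beyond the definition of $\mathbf D^\otimes$ and a standard three-epsilon approximation. The calculation already displayed in the excerpt immediately before the corollary (namely, the convergence $\int\!\int V(x-y)\mu_n(\d x)\nu_n(\d y)\to\sum_j\int\!\int V(x-y)\alpha_j(\d x)\beta_j(\d y)$ on the dense subset $\widetilde{\mathcal M}_1^\otimes\subset\widetilde{\mathcal X}^\otimes$) is precisely the special case $\xi_n^\otimes=\widetilde{\mu_n\nu_n}$, and extending from this dense subset to all of $\widetilde{\mathcal X}^\otimes$ via the same three-epsilon argument also works. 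I do not anticipate any substantial obstacle.
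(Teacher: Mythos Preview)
Your proposal is correct and follows essentially the same idea as the paper: recognizing that $H=\Lambda^\otimes(f_V,\cdot)$ for $f_V(x,y)=V(x-y)\in\mathcal F_1^\otimes$, so continuity is built into the topology on $\widetilde{\mathcal X}^\otimes$. The paper's justification is terser---it simply invokes the convergence on the dense set $\widetilde{\mathcal M}_1^\otimes$ established in the proof of Theorem~\ref{thmcompact} together with density---whereas you supply the explicit three-epsilon argument (approximating $f_V$ by the countable family $\{f_r\}$ and using the uniform mass bound from \eqref{Xotimes}) that makes the extension rigorous; this is a welcome clarification rather than a different route.
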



 \subsection{Large deviations in the compact space $\widetilde{\mathcal X}^\otimes$.}\label{sec-compactldp}
 
We recall that $\P^{\ssup 1}$ and $\P^{\ssup 2}$ denotes 
Wiener measures corresponding to two independent $d$ Brownian motions $W^{\ssup 1}$ and $W^{\ssup 2}$ starting from the origin, and 
$\P^\otimes=\P^{\ssup 1}\otimes \P^{\ssup 2}$. For any $i=1,2$, we also denote by,
$$
L^{\ssup i}_t= \frac 1t \int_0^t \d s \, \delta_{W^{\ssup i}_s} \in \Mcal_1
$$
the normalized occupation measure of th $i^{\mbox{th}}$ motion until time $t$. Since 
$$ 
L^\otimes_t= L^{\ssup 1}_t\otimes L^{\ssup 2}_t \in \Mcal_1^\otimes \to \widetilde\Mcal_1^\otimes \subset \widetilde{\mathcal X}^\otimes,
$$
and $\widetilde L^\otimes_t$ can be identified as an element in the space space $\widetilde{\mathcal X}^\otimes$ and we will prove a large deviation principle
for the distributions of $\widetilde L^\otimes_t$ under the product measure $\P^\otimes$.

Let us first introduce the classical Donsker-Varadhan rate function 
\begin{equation}\label{Idef}
I(\mu) = 
\begin{cases}
\frac 12 \|\nabla f\|_2^2
\qquad\quad\mbox{if} \,\,f=\sqrt{\frac{\d \mu}{\d x}}\in H^1(\R^d)
\\
\infty \qquad\qquad\quad\quad\mbox{else.}
\end{cases}
\end{equation}
Here $H^1(\R^d)$ is the usual Sobolev space of square integrable functions with square integrable derivatives. Note that the function $\mu\mapsto I(\mu)$ is translation invariant and depends only on the orbit $\widetilde\mu$.
Furthermore, this map is convex and homogenous of degree $1$. It is well-known (\cite{DV75-1}-\cite{DV83-4}) that 
the family of distributions of any occupation measure $L_t=1/t\int_0^t \delta_{W_s} \d s$ 
under any Wiener measure $\P$ satisfies a ``weak" large deviation principle in the space probability measures on $\Mcal_1(\R^d)$ with the rate function $I$. This means, under the weak topology, for every compact subset $K\subset\Mcal_1(\R^d)$
and for every open subset $G\subset\Mcal_1(\R^d)$,
\begin{equation}\label{DVub}
\limsup_{t\to\infty} \frac 1t \log \P(L_t\in K) \leq -\inf_{\mu\in K} I(\mu)
\end{equation}
\begin{equation}\label{DVlb}
\liminf_{t\to\infty} \frac 1t \log \P(L_t\in G) \geq -\inf_{\mu\in G} I(\mu),
\end{equation}
If for any family of distributions the upper bound \eqref{DVub} holds for any closed set, we say that the family satisfies a strong large deviation principle, or
just large deviation principle.

Let us also introduce the functional $\widetilde I: \widetilde{\mathcal X}^\otimes\to [0,\infty]$ given by
\begin{equation}\label{Itilde}
{\widetilde I}\big(\xi^\otimes\big)=\sum_{j} \big[I(\alpha)+I(\beta)\big] \qquad \xi=\{\widetilde{\alpha_j\beta_j}\}_j
\end{equation}
where $I$ is defined in \eqref{Idef} and $\alpha_j, \beta_j \in \Mcal_{\leq 1}$ so that the product $\alpha_j\beta_j$ is any arbitrary element of the orbit $\widetilde{\alpha_j\beta_j}$. Also recall that, $I(\cdot)$ is translation invariant.
\begin{theorem}\label{thmldp}
The distributions of $\widetilde L_t^\otimes$ under $\P^\otimes$ satisfies a large deviation principle in the compact metric space $\widetilde{\mathcal X}^\otimes$ with rate function $\mathfrak I$.
\end{theorem}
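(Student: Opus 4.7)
The plan is to establish a weak large deviation principle in the compact metric space $\widetilde{\mathcal X}^\otimes$, which automatically upgrades to a full LDP since every closed subset of a compact space is itself compact. Exponential tightness is automatic from compactness of $\widetilde{\mathcal X}^\otimes$ (Theorem \ref{thmcompact}), so it suffices to verify the two local bounds at each $\xi^\otimes = \{\widetilde{\alpha_j \beta_j}\}_j \in \widetilde{\mathcal X}^\otimes$: $\inf_U \limsup_{t\to\infty} \frac{1}{t}\log\P^\otimes(\widetilde L_t^\otimes \in U) \leq -\widetilde I(\xi^\otimes)$ and $\inf_U \liminf_{t\to\infty} \frac{1}{t}\log\P^\otimes(\widetilde L_t^\otimes \in U) \geq -\widetilde I(\xi^\otimes)$, where $U$ ranges over open neighborhoods in the metric $\mathbf D^\otimes$.

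For the lower bound, I would exploit the density of $\widetilde{\mathcal M}_1^\otimes$ in $\widetilde{\mathcal X}^\otimes$ proved in Theorem \ref{thmcompact}. Assume without loss of generality $\widetilde I(\xi^\otimes)<\infty$, so only finitely many islands $(\widetilde{\alpha_j\beta_j})_{j=1}^n$ carry nontrivial mass. Using the construction \eqref{cvxcomb}, I would form approximating probability measures $\mu_{n,M}$ and $\nu_{n,M}$ by placing the pieces $\alpha_j$ and $\beta_j$ at a common widely-separated set of shifts $(a_j)_{j=1}^n$ and filling in the mass deficit with a Gaussian of variance $M$. Applying the classical Donsker--Varadhan lower bound \eqref{DVlb} to each marginal (each $L_t^{\ssup i}$ independently) together with the product structure of $\P^\otimes$, one gets $\P^\otimes(\widetilde L_t^\otimes \in U) \geq \exp\{-t[I(\mu_{n,M})+I(\nu_{n,M})]+o(t)\}$. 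The convexity and degree-one homogeneity of $I$, combined with the fact that $I$ vanishes on Gaussians of large variance (scaling), yield $I(\mu_{n,M})+I(\nu_{n,M}) \longrightarrow \sum_{j}[I(\alpha_j)+I(\beta_j)] = \widetilde I(\xi^\otimes)$ as $M\to\infty$ and $n\to\infty$.

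For the upper bound, which is the main obstacle, I would argue locally via the peeling procedure used to prove compactness. Fix $\xi^\otimes$ and a small neighborhood $U$. The event $\{\widetilde L_t^\otimes \in U\}$ forces the two occupation measures to split, up to a small error in total variation after appropriate shifts, as $L_t^{\ssup 1} \approx \sum_j \alpha_j\star\delta_{a_t^{\ssup j}} + \gamma_t^{\ssup 1}$ and $L_t^{\ssup 2} \approx \sum_j \beta_j\star\delta_{a_t^{\ssup j}} + \gamma_t^{\ssup 2}$ with $\gamma_t^{\ssup i}$ totally disintegrating and the pieces occupying widely separated balls $B_R(a_t^{\ssup j})$. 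Discretizing the possible centers $a_t^{\ssup j}$ over a lattice (so only polynomially many combinations contribute on an exponential scale), the strong Markov property applied at successive entry and exit times of each ball $B_R(a_t^{\ssup j})$ factors the two-motion probability into a product over islands and over the two motions. On each fixed bounded ball the Donsker--Varadhan upper bound \eqref{DVub} applies to the rescaled conditional occupation measure, producing the factor $\exp\{-t\, p_j I(\alpha_j/p_j) + o(t)\}=\exp\{-t\, I(\alpha_j)+o(t)\}$ by homogeneity, and similarly for $\beta_j$. Multiplying, the leaked mass contributes nothing because $I$ of the disintegrating part vanishes in the limit. Summing over the discretized configurations and sending $R\to\infty$, $U\to\{\xi^\otimes\}$ produces the required local upper bound.

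The principal difficulty is the technical bookkeeping in the upper bound: controlling the combinatorial enumeration of shift patterns on an exponential scale, ensuring that the strong Markov factorization is uniform across the (growing) number of islands and the two independent motions, and verifying that homogeneity of $I$ correctly converts the conditional rate on each bounded ball into $I(\alpha_j)$ rather than the probability-normalized $I(\alpha_j/p_j)$. This is analogous to the single-path upper bound of \cite{MV14}; the new ingredient here is the joint treatment of two motions whose island centers must pair up within bounded distance, but once the islands are fixed this pairing becomes a deterministic bookkeeping issue and the two motions decouple thanks to the product structure of $\P^\otimes$.
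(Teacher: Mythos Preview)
Your proposal is correct and follows essentially the same route as the paper. The lower bound is identical: density via the convex combination \eqref{cvxcomb}, classical Donsker--Varadhan on each marginal, and convexity plus $1$-homogeneity of $I$ to pass to the limit. For the upper bound, the paper does exactly what you describe---it observes that $\{\widetilde L_t^\otimes \in U(\xi^\otimes)\}$ forces both occupation measures to concentrate in \emph{common} widely separated balls $B_r(c_j)$, then uses independence of the two motions to factor the probability---but instead of re-doing the lattice discretization and Markov-property bookkeeping you sketch, it simply invokes \cite[Proposition~4.4]{MV14}, which packages that single-path argument. Your outline of the mechanism behind that proposition is accurate; the only simplification worth noting is that one may first truncate $\xi^\otimes$ to finitely many islands (losing an arbitrarily small amount in the rate), so uniformity over a growing number of islands is not needed.
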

\begin{proof}
{\bf{The upper bound.}} We need to show that, for any closed set $F\subset\widetilde{\mathcal X}^\otimes$ in the metric $\mathbf D^\otimes$
$$
\limsup_{t\to\infty} \frac 1t \log \P^\otimes\big[\widetilde L_t^\otimes\in F\big] \leq - \inf_{\xi^\otimes \in F} \widetilde I(\xi^\otimes).
$$
Since $\widetilde{\mathcal X}^\otimes$ is a compact metric space, it suffices to show that, for any $\xi^\otimes= (\widetilde{\alpha_j\beta_j})_j\in \widetilde{\mathcal X}^\otimes$ and any 
neighborhood $U(\xi)$ of $\xi^\otimes$,
\begin{equation}\label{Qtub}
\begin{aligned}
\limsup_{t\to\infty} \frac 1t \log \P^\otimes\big[\widetilde L_t^\otimes\in U(\xi)\big] 
&=\limsup_{t\to\infty} \frac 1t \log \P^\otimes\big[\,\,\widetilde{L^{\ssup 1}_tL_t^{\ssup 2}} \,\,\in \,\,U\big(\,\{\widetilde{\alpha_j\beta_j}\}_j\,\big)\big] \\
&\leq - \sum_j \big[I(\alpha_j)+I(\beta_j)\big]
\end{aligned}
\end{equation}
Let us recall the convergence criterion in the space $\widetilde{\mathcal X}^\otimes$ defined via the decomposition \eqref{proddecomp}. 
Then the requirements \eqref{eq4thm}-\eqref{eq7thm} imply that to estimate the left hand side in \eqref{Qtub}, it is enough to upper bound the probability
\begin{equation}\label{ub0}
\begin{aligned}
\P^\otimes\bigg\{\exists c_1, \dots, c_k\in \R^d, \, &k\in \N, r>0\colon \,\, |c_i- c_j| \geq 10r \,\,\forall i\ne j, \\
&L^{\ssup 1}_t\big|_{B_r(0)} \in U_1\big(\alpha_j\star\delta_{c_j}\big), \, L^{\ssup 2}_t\big|_{B_r(0)} \in U_2\big(\beta_j\star\delta_{c_j}\big)\bigg\},
\end{aligned}
\end{equation}
where $U_1, U_2$ again denote neighborhoods in the weak topology in $\Mcal_{\leq 1}$. The last statement requires a remark:  Loosely speaking, 
note that the requirement \eqref{eq5thm} and \eqref{eq2thm}-\eqref{eq3thm} imply that, for the event 
$$
\bigg\{\widetilde{L^{\ssup 1}_tL_t^{\ssup 2}} \approx \{\widetilde{\alpha_j\beta_j}\}_j\,\,\mbox{ in }\,\, \widetilde{\mathcal X}^\otimes\bigg\}
$$
to occur, $L^{\ssup 1}_t$ and $L_t^{\ssup 2}$ both must find
``common concentration regions" $B_r(c_1),\dots B_r(c_k)$ where they resemble $\alpha_1,\dots \alpha_k$ and $\beta_1\dots,\beta_k$ respectively,
in the usual weak topology in $\Mcal_\leq 1$.

Again by independence, the probability in \eqref{ub0} splits into the product
$$
\begin{aligned}
&\P^{\ssup 1}\bigg\{\exists c_1, \dots, c_k\in \R^d, \, k\in \N, r>0\colon \,\, |c_i- c_j| \geq 4r, \,\,
L^{\ssup 1}_t\big|_{B_r} \in U_w\big(\alpha_j\star\delta_{c_j}\big)\bigg\}\\
&\times \P^{\ssup 2}\bigg\{\exists c_1, \dots, c_k\in \R^d, \, k\in \N, r>0\colon \,\, |c_i- c_j| \geq 4r, \,\,\, L^{\ssup 2}_t\big|_{B_r} \in U_w\big(\beta_j\star\delta_{c_j}\big)\bigg\}.
\end{aligned}
$$
Proposition 4.4 in \cite{MV14} identifies $\sum_j I(\alpha_j)$ and $\sum_j I(\beta_j)$ as the exponential decay rates of the probabilities above and finishes the proof of \eqref{Qtub}.
\medskip\noindent

{\bf{The Lower bound.}} It suffices to show that for 
any $\xi^\otimes\in \widetilde{\mathcal X}^\otimes$ with $\mathfrak I^\otimes(\xi^\otimes)<\infty$ and any neighborhood $U$ of $\xi^\otimes$,
\begin{equation}\label{lb}
\liminf_{t\to\infty} \frac 1t \log \mathbb \P^\otimes(\widetilde L_t^\otimes \in U) \geq - \widetilde I (\xi^\otimes)
\end{equation}
Let us fix any $\xi^\otimes= \{\widetilde{\alpha_j\beta_j}\}_j\in \widetilde{\mathcal X}^\otimes$ with $\mathfrak I^\otimes(\xi^\otimes)<\infty$. By the 
proof of Theorem \ref{thmcompact}, the space $\widetilde\Mcal_1^\otimes$ is dense in $\widetilde{\mathcal X}^\otimes$
and there is a sequence $\widetilde{\mu_n\nu_n}$ with $\mu_n,\nu_n\in \Mcal_1$ which converges to $\xi^\otimes$, recall the convex decomposition of $\mu_n$ and $\nu_n$ constructed in \eqref{cvxcomb}.
Furthermore, since $I(\cdot)$ is convex and $1$-homogeneous, such a convex decomposition implies that
\begin{equation}\label{claim1}
\begin{aligned}
 \limsup_{n\to\infty} \big[ I(\mu_n)+ I(\nu_n)\big] 
\leq \sum_j \big[I(\alpha_j\star\delta_{a_j})+I(\beta_j\star\delta_{b_j})\big]
=\sum_j \big[I(\alpha_j)+I(\beta_j)\big] 
=\widetilde I(\xi^\otimes).
\end{aligned}
\end{equation}
Since $U$ is any neighborhood of $\xi^\otimes$ in the space $\widetilde{\mathcal X}^\otimes$ and $\widetilde{\mu_n\nu_n} \to \xi^\otimes$ in this space with $\mu_n,\nu_n\in \Mcal_1$,
for $n$ large enough,
$$
\begin{aligned}
\P^\otimes\big[\widetilde L_t^\otimes\in U\big] &\geq \P^\ssup 1\big[\widetilde L^{\ssup 1}_t \in U_1(\widetilde \mu_n)\big] \times \P^\ssup 2\big[\widetilde L^{\ssup 2}_t \in U_2(\widetilde \nu_n)\big]  \\
&=\P^\ssup 1\big[ L^{\ssup 1}_t \in U_1( \mu_n)\big] \times \P^\ssup 2\big[ L^{\ssup 2}_t \in U_2( \nu_n)\big] 
\end{aligned}
$$
where $U_1(\mu_n), U_2(\nu_n)$ denote some neighborhoods of $\mu_n$ and $\nu_n$ in the usual weak topology in $\Mcal_1$.
Then by the classical lower bound \eqref{DVlb},
$$
\liminf_{t\to\infty} \frac 1t \log \P^\otimes\big[\widetilde L_t^\otimes\in U\big] \geq \liminf_{n\to\infty} \bigg\{-I(\mu_n)- I(\nu_n)\bigg\} \geq - \widetilde I(\xi^\otimes).
$$
Note that the last bound follows from \eqref{claim1}. This finishes the proof of the lower bound and Theorem \ref{thmldp} is also proved.
\end{proof}

\section{Dirac type mutual interaction: Brownian intersection measures in $\R^3$.}\label{sec-BrowInt}

We now turn to our first application to a model introduced in Section \ref{intro-sec-Dirac}.
The Gibbs measure we will be interested in is written formally as 
\begin{equation}\label{GibbsDelta}
\frac 1 {Z_t} \exp\bigg\{\frac 1t \int_0^t \int_0^t \d\sigma\, \d s\,\,\delta_0\big(\omega^{\ssup 1}_\sigma- \omega^{\ssup 2}_s\big)\bigg\} \d\P^\otimes
\end{equation}
Here $\delta_0$ denotes the usual Dirac measure, while $Z_t$ is the normalizing constant, and $\P^{\ssup 1}$ and 
$\P^{\ssup 2}$ are Wiener measures corresponding to two independent Brownian motions $W^{\ssup 1}$ and $W^{\ssup 2}$ in $\R^3$
starting at the origin and $\P^\otimes=\P^{\ssup 1}\otimes \P^{\ssup 2}$.
Recall that the exponential weight appearing in the above measures can be written (dropping the $1/t$ factor) as
the density (at zero) of the measure in $\R^3$, defined formally as
\begin{equation}\label{symbolical1}
\ell_t(A)= \int_A \,\d y\hspace{1mm} \prod_{i=1}^2 \int_0^{t}\d s \, \delta_y(W^{\ssup i}_s) \qquad A\subset\R^3
\end{equation}
supported on the non-empty random set 
\begin{equation*}
S=\bigcap_{i=1}^2 W^{\ssup i}_{[0,t]}= \bigg\{x\in \R^3\colon\, x=W^{\ssup 1}(\sigma)= W^{\ssup 2}(s)\,\,\,\mbox{ for some } \sigma, s\in [0,t]\bigg\}
\end{equation*}
of intersections of $W^{\ssup 1}$ and $W^{\ssup 2}$ until time $t$. We also recall that the expression \eqref{symbolical1} is also formal in $\R^3$ due to
the singularity of the occupation measures $L^{\ssup i}_t$ in $\R^3$. We refer to \cite{GHR84}, \cite{LG86} for a definition and a rigorous construction of $\ell_t$. 
As mentioned before, tail behavior of the total mass $\{\ell_t(\R^d)> at^2\}$ for $a>0$ and as $t\to\infty$ have been studied (\cite{KM01}, \cite{Ch03}),
and as measures, the distributions of $t^{-2}\ell_t$ under $\P^\otimes$ have been analyzed in \cite{KM13} based on classical theory of Donsker and Varadhan, in terms of a (weak) large deviation principle
with assumption that the motions do not leave a given bounded set until a large time $t$. The main goal of this section is to 
drop any assumption on restricting paths to a bounded domain and derive the asymptotic behavior of the distributions $t^{-2}\ell_t$ under the Gibbs transformation of the form \eqref{GibbsDelta}. Note that the model \eqref{GibbsDelta} enjoys an inherent ``diagonal shift-invariance" property which dictates $\delta_0(W^{\ssup 1}_\sigma- W^{\ssup 2}_s)=\delta_x(W^{\ssup 1}_\sigma- W^{\ssup 2}_s)$ for any $x\in\R^3$,
and it is conceivable that the desired analysis of the measures \eqref{GibbsDelta} yields to the theory developed in Section \ref{sec-paircompact}.

Let us quickly describe the organization of the present section. In Section \ref{sec-3.1} we declare the statements of our main results (Theorem \ref{thm-LDPBrowInt} and Theorem \ref{thm2-LDPBrowInt}), prove
Theorem \ref{thm2-LDPBrowInt} and end with a sketch of the proof of Theorem \ref{thm-LDPBrowInt}, which is carried out in three main steps. In Section \ref{sec-mollify}- Section \ref{sec-Gamma}, we carry out these three steps in detail. 



\subsection{Results: Convergence of path measures under Dirac type interaction.}\label{sec-3.1}

Let us denote by $\Mcal=\Mcal(\R^3)$ the space of finite measures in $\R^3$, equipped
with the vague topology induced by convergence of test integrals against continuous functions with 
compact support. As usual, we denote by $\widetilde\Mcal=\{\gamma\star\delta_x\colon\, \gamma\in\Mcal,\, x\in \R^3\}$
the quotient space of orbits $\widetilde\gamma\in\widetilde\Mcal$. 
We define the space
\begin{equation}\label{spaceY}
\widetilde{\Mcal}^\N= \big\{\{\widetilde\gamma_j\}_{j\in J} \colon  \,\gamma_j\in \Mcal\,\,\forall j\in J\big\}
\end{equation}
of empty, finite or countable collections $\xi= \{\widetilde\gamma_j\}_j$, which inherits the product topology
from the quotient space $\widetilde\Mcal$. We remark that, the Brownian intersection measure $\ell_t$ in $\R^3$ is random element of $\Mcal$
and our first main result concerns an abstract large deviation principle for the distributions of $t^{-2} \widetilde\ell_t$ in the space $\widetilde{\Mcal}^\N$. 
\begin{theorem}\label{thm-LDPBrowInt}
The family of distributions of $\big(t^{-2}\widetilde\ell_t\big)_{t\geq 0}$ under $\P^\otimes$ in the space $\widetilde{\Mcal}^\N$ satisfies a large deviation principle 
with rate function 
\begin{equation}\label{J}
\begin{aligned}
\mathfrak J(\xi)= \inf \bigg\{\frac 12 \sum_j\bigg( \|\nabla\psi_j\|_2^2+\|\nabla\phi_j\|_2^2\bigg)\colon \,\,\frac{\d\gamma_j}{\d x}&=\psi_j^2\,\phi_j^2\,\,\mbox{ for }\,\, \psi_j,\,\phi_j\in H^1(\R^3)\,\,  \forall j\,\\
 &\mbox { and }\,\,\sum_j \|\psi_j\|_2^2\leq 1, \, \sum_j \|\phi_j\|_2^2\leq 1\bigg\}
\end{aligned}
\end{equation}
for $\xi=\{\widetilde\gamma_j\}_j\in \widetilde{\Mcal}^\N$ such that for each $j$, $\gamma_j\in \Mcal(\R^3)$ has a density, and $\mathfrak J(\xi)=\infty$ otherwise. 
Equivalently, for any continuous and bounded function $\widetilde H:\widetilde{\Mcal}^\N\to \R$, 
$$
\lim_{t\to\infty} \frac 1t \log \E^\otimes\bigg\{\exp\big\{t \widetilde H\big(t^{-2}\widetilde \ell_t\big)\big\}\bigg\}= \sup_{\xi\in\widetilde{\Mcal}^\N}\big\{\widetilde H(\xi)- \mathfrak J(\xi)\big\}.
$$
\end{theorem}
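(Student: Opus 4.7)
The plan is to bootstrap the LDP for product occupation measures in the compact space $\widetilde{\mathcal X}^\otimes$ established in Theorem \ref{thmldp}, up to the desired LDP for $t^{-2}\widetilde\ell_t$ in $\widetilde{\Mcal}^\N$. The principal difficulty is that the intersection measure formally corresponds to the pointwise product of the densities of $L_t^{\ssup 1}$ and $L_t^{\ssup 2}$, a map which is singular and discontinuous on $\widetilde{\mathcal X}^\otimes$. The remedy is a mollification combined with exponential approximation.

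First I would introduce a smooth approximate identity $p_\eps$ (say, the Gaussian kernel of variance $\eps$) and define the mollified intersection measure $\ell_{t,\eps}$ with density $y\mapsto t^2 (p_\eps \ast L_t^{\ssup 1})(y)(p_\eps \ast L_t^{\ssup 2})(y)$. The map $\Psi_\eps \colon \widetilde{\mathcal X}^\otimes \to \widetilde{\Mcal}^\N$ sending $\{\widetilde{\alpha_j \beta_j}\}_j$ to $\{\widetilde{(p_\eps \ast \alpha_j)(p_\eps \ast \beta_j)\,\d x}\}_j$ is continuous: since $p_\eps$ vanishes at infinity, widely separated pairs in $\widetilde{\mathcal X}^\otimes$ produce components with mutually distant supports in $\widetilde{\Mcal}^\N$, matching the compactification topology. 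Consequently, the contraction principle applied to Theorem \ref{thmldp} yields an LDP for $t^{-2}\widetilde{\ell_{t,\eps}}$ with rate function $\mathfrak J_\eps(\xi) = \inf\{\widetilde I(\xi^\otimes) : \Psi_\eps(\xi^\otimes) = \xi\}$.

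The main obstacle is exponential approximation: I must show that for every $\delta > 0$ and every metric $\mathbf D_\ast$ generating the topology of $\widetilde{\Mcal}^\N$,
$$
\lim_{\eps \downarrow 0} \limsup_{t \to \infty} \frac 1t \log \P^\otimes \Bigl\{ \mathbf D_\ast\big(t^{-2}\widetilde{\ell_{t,\eps}},\, t^{-2}\widetilde{\ell_t}\big) > \delta \Bigr\} = -\infty.
$$
In a bounded domain such an approximation was achieved in \cite{KM13} using the Dirichlet spectral expansion of $-\tfrac12 \Delta$; in the full space $\R^3$ this technique is unavailable, the spectrum being purely continuous. Instead I would combine (i) an exponential tightness estimate confining both Brownian motions to a ball of large but suitable radius at negligible exponential cost, with (ii) a quantitative moment estimate of the form $\E^\otimes[(\ell_t(B) - \ell_{t,\eps}(B))^k] \leq (\eps^\kappa t^2)^k (k!)^C$ uniformly over balls $B\subset\R^3$ and large $t$, obtained by expanding via Kac's formula and dominating the resulting iterated heat-kernel integrals in the spirit of \cite{Ch03}. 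This step is the delicate heart of the proof and is where the novelty over the compact-domain setup of \cite{KM13} lies; deferring the bulk of its execution to Section \ref{sec-BrowInt-proof} is natural.

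Once the exponential approximation is in place, a standard Dawson--G\"artner passage yields the LDP for $t^{-2}\widetilde{\ell_t}$ in $\widetilde{\Mcal}^\N$ with rate function $\mathfrak J$ obtained as the lower semicontinuous regularization of $\sup_\eps \mathfrak J_\eps$. To identify $\mathfrak J$ with the expression in \eqref{J}, note that $I(\alpha_j)<\infty$ forces $\d\alpha_j / \d x = \psi_j^2$ with $\psi_j \in H^1(\R^3)$ and $I(\alpha_j) = \tfrac12 \|\nabla \psi_j\|_2^2$, and similarly $\d\beta_j/\d x = \phi_j^2$; since $p_\eps \ast \psi_j^2 \to \psi_j^2$ in $L^1$, the density of $\gamma_j$ resolves to the pointwise product $\psi_j^2 \phi_j^2$ in the limit $\eps\downarrow 0$. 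The mass constraints $\sum_j \alpha_j(\R^3) \leq 1$ and $\sum_j \beta_j(\R^3) \leq 1$ inherited from $\widetilde{\mathcal X}^\otimes$ translate directly into $\sum_j \|\psi_j\|_2^2 \leq 1$ and $\sum_j \|\phi_j\|_2^2 \leq 1$. The Varadhan-style reformulation of the LDP stated in the theorem finally follows from Varadhan's lemma, using the boundedness and continuity of $\widetilde H$ together with the exponential tightness already secured in step (i).
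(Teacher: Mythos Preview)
Your three-step architecture (mollify, contraction from Theorem \ref{thmldp}, exponential approximation, rate-function identification) is precisely the paper's. Two points in your execution diverge and deserve correction.

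First, for the exponential approximation the paper does \emph{not} confine the motions to a ball --- doing so at cost $\le e^{-Mt}$ would force the radius to grow like $t$, undoing the full-space setup that is the whole point of Section \ref{sec-paircompact}. Instead (Lemma \ref{lemma-superexp}) the paper reduces to a fixed compactly supported test function $f$, applies Chebyshev with $k=\lceil t\rceil$, trades the fixed horizon $t$ for an independent exponential time $\tau$ via Brownian scaling, and bounds $\overline\E[\langle f,\ell_\tau-\ell_{\eps,\tau}\rangle^k]$ on the Fourier side: time-ordering and the Markov property convert the $k$-th moment into an integral over $(\R^3)^k$ with kernel $\prod_j[1-\widehat\varphi_\eps(\lambda_j-\lambda_{j-1})](1+|\lambda_j|^2)^{-2}$, and the conclusion follows from $\int_{\R^3}(1+|\lambda|^2)^{-2}\,\d\lambda<\infty$ together with $|1-\widehat\varphi_\eps|\to 0$. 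Your step (i) is therefore unnecessary; your moment bound (ii) could substitute for the Fourier argument only if the factorial exponent satisfies $C\le 2$, so that Stirling cancels against the prefactor $t^{-2k}$.

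Second, the passage from the $\eps$-LDPs to the limiting one is not Dawson--G\"artner but the exponentially-good-approximation device (\cite{DZ98}, Theorem 4.2.16), which produces $\mathfrak J(\xi)=\sup_{\delta>0}\liminf_{\eps\downarrow 0}\inf_{U_\delta(\xi)}\mathfrak J_\eps$ rather than an lsc envelope of $\sup_\eps\mathfrak J_\eps$. Establishing that this equals \eqref{J} is Lemma \ref{gammalimit}; the nontrivial direction extracts subsequential limits of the approximate minimizers $\psi_j^{(\eps,\delta)},\phi_j^{(\eps,\delta)}$ inside the compact space $\widetilde{\mathcal X}^\otimes$ (via Theorem \ref{thmcompact}) and uses weak lower semicontinuity of $\|\nabla\cdot\|_2^2$. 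Your sketch via $p_\eps\ast\psi_j^2\to\psi_j^2$ in $L^1$ addresses only the easy inequality.
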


One particularly interesting choice of such a shift-invariant functional $\widetilde H: \widetilde{\Mcal}^\N\to \R$ is given by 
\begin{equation}\label{eq-totalmass}
\widetilde H(\xi)= \sum_j\int_{\R^3} \mathbf{1} \,\gamma_j(\d x)= \sum_j\gamma_j(\R^3) \qquad\xi=\{\widetilde\gamma_j\}.
\end{equation}
This choice leads to the following assertion.  
\begin{theorem}\label{thm2-LDPBrowInt}
Let 
\begin{equation}\label{Qdef}
\d\mathbb Q_t=\frac 1 {Z_t} \exp\big\{t^{-1} \ell_t(\R^3)\big\} \d\P^\otimes
\end{equation}
where $Z_t=\E^\otimes\big[\exp\{t^{-1} \ell_t(\R^3)\big]$. Then 
\begin{equation}\label{eq-cor}
\limsup_{t\to\infty}\frac 1t\log \mathbb Q_t\big\{t^{-2}\ell_t \notin U(\mathfrak m)\big\} <0,
\end{equation}
where $U(\mathfrak m)$ denotes any neighborhood of the set
$$
\mathfrak m= \big\{\psi_0^4\star\delta_x\colon x \in \R^3\big\},
$$
in the vague topology and $\psi_0\in H^1(\R^3)$ is a smooth, rotationally symmetric function vanishing at infinity with $\|\psi_0\|_2=1$ and is the unique (modulo spatial shifts) maximizer 
of the translation invariant variational problem
$$
\chi=\sup_{\|\psi\|_2=1}\bigg\{\int_{\R^3} \d x \, \psi^4(x) \,\,-  \|\nabla\psi\|_2^2\bigg\}.
$$
\end{theorem}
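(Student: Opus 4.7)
The strategy is to apply Varadhan's Laplace principle to the LDP from Theorem~\ref{thm-LDPBrowInt} with the shift-invariant total-mass functional $\widetilde H(\xi)=\sum_j\gamma_j(\R^3)$ from \eqref{eq-totalmass}, to identify the set of maximizers of the resulting variational problem with $\mathfrak m$, and then to invoke the LDP upper bound to obtain exponential concentration around $\mathfrak m$. Using the identity $t^{-1}\ell_t(\R^3)=t\,\widetilde H(t^{-2}\widetilde\ell_t)$, I would first write
\begin{equation*}
Z_t=\E^\otimes\exp\bigl\{t\,\widetilde H(t^{-2}\widetilde\ell_t)\bigr\}
\end{equation*}
and then extract the free-energy asymptotic
\begin{equation*}
\lim_{t\to\infty}\tfrac{1}{t}\log Z_t\;=\;\chi^\star\,:=\,\sup_{\xi\in\widetilde{\Mcal}^\N}\bigl\{\widetilde H(\xi)-\mathfrak J(\xi)\bigr\}.
\end{equation*}

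The next step is to evaluate $\chi^\star$ and identify its optimizers. For any admissible $\xi=\{\widetilde{\gamma_j}\}_j$ with representations $\gamma_j=\psi_j^2\phi_j^2\,\d x$ and constraints $\sum_j\|\psi_j\|_2^2,\,\sum_j\|\phi_j\|_2^2\le 1$, the pointwise AM--GM inequality $\psi_j^2\phi_j^2\le\tfrac12(\psi_j^4+\phi_j^4)$ gives
\begin{equation*}
\widetilde H(\xi)-\mathfrak J(\xi)\;\le\;\sum_j\tfrac12\bigl(\textstyle\int\psi_j^4-\|\nabla\psi_j\|_2^2+\int\phi_j^4-\|\nabla\phi_j\|_2^2\bigr)\;\le\;\chi,
\end{equation*}
where the last bound uses the convexity in $m$ of $\chi(m):=\sup_{\|\psi\|_2\le m}\{\int\psi^4-\|\nabla\psi\|_2^2\}$ together with $\chi(0)=0$, which imply $\sum_j\chi(a_j)\le\chi(\sum_ja_j)\le\chi(1)=\chi$ whenever $\sum_j a_j\le 1$ and hence that the optimum is concentrated on a single component. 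The equality conditions $\psi_j=\phi_j$ and $\|\psi_j\|_2=1$, together with Lieb's uniqueness (modulo translation) of the optimizer $\psi_0$ of $\chi$, identify the maximizing orbits of $\widetilde H-\mathfrak J$ with $\mathfrak m=\{\psi_0^4\star\delta_x\colon x\in\R^3\}$ and give $\chi^\star=\chi$.

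Finally, for any vague neighborhood $U(\mathfrak m)$, the complement $F=\widetilde{\Mcal}^\N\setminus U(\mathfrak m)$ is closed in the product topology and, by uniqueness,
\begin{equation*}
\delta\,:=\,\chi-\sup_{\xi\in F}\bigl\{\widetilde H(\xi)-\mathfrak J(\xi)\bigr\}\,>\,0.
\end{equation*}
Combining the Varadhan-type upper bound for $\E^\otimes[\mathbf 1_F\,\exp\{t\widetilde H(t^{-2}\widetilde\ell_t)\}]$ with the free-energy asymptotic $\tfrac{1}{t}\log Z_t\to\chi$ yields the desired
\begin{equation*}
\limsup_{t\to\infty}\tfrac{1}{t}\log\mathbb Q_t\bigl\{t^{-2}\ell_t\notin U(\mathfrak m)\bigr\}\,\le\,-\delta\,<\,0.
\end{equation*}

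The main obstacle is the first step: $\widetilde H$ is unbounded on $\widetilde{\Mcal}^\N$ and $\ell_t(\R^3)$ is a highly singular quantity in three dimensions without a pointwise definition, so Varadhan's lemma cannot be applied off the shelf. Justifying the Laplace principle for this unbounded functional requires the mollification/exponential-approximation procedure developed in Section~\ref{sec-BrowInt-proof}, which replaces $\ell_t$ by smooth proxies on the exponential scale, together with exponential tightness estimates and a truncate-then-pass-to-the-limit argument to control the loss of boundedness of $\widetilde H$.
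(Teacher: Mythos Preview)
Your approach is essentially the paper's: apply Theorem~\ref{thm-LDPBrowInt} to the total-mass functional \eqref{eq-totalmass}, symmetrize via $2\psi^2\phi^2\le\psi^4+\phi^4$, reduce to a single component, invoke uniqueness of the optimizer, and conclude via the LDP upper bound. Two minor corrections: for the single-component reduction the paper does not use convexity of $m\mapsto\chi(m)$ (which you assert without proof) but instead establishes strict super-additivity $\chi(m_1+m_2)>\chi(m_1)+\chi(m_2)$ directly via the rescaling $\psi_m(x)=m^2\psi(mx)$ in $\R^3$, which turns the mass-$m$ problem into $m^5\int\psi^4-m^3\|\nabla\psi\|_2^2$ and makes the inequality transparent; and the uniqueness reference for the $L^4$ variational problem is Talenti~\cite{T76}, not Lieb~\cite{L76} (the latter concerns the Coulomb/Choquard functional of Section~\ref{sec-pairpolaron}). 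Your closing caveat about the unboundedness and vague-discontinuity of $\widetilde H$ is well taken and is glossed over in the paper just as you flag it.
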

\begin{proof}
Note that, by Theorem \ref{thm-LDPBrowInt} and \eqref{eq-totalmass}, it follows that 
$$
\begin{aligned}
\lim_{t\to \infty}\frac 1t \log Z_t 
&=\sup_{\xi=\{\widetilde\gamma_j\}_j\in\widetilde{\Mcal}^\N} \,\,  \bigg\{\sum_j\bigg(\int_{\R^3} \d x \, \psi_j^2(x)\phi_j^2(x) \, - \frac 12 \|\nabla\psi_j\|_2^2-\frac 12 \|\nabla\phi_j\|_2^2\bigg)\colon \\
&\qquad\qquad\qquad\quad\,\,\, \gamma_j(\d x)= \psi_j^2(x)\phi_j^2(x) \d x, \,\, \psi_j, \phi_j \in H^1(\R^3)\,\,\forall j,\\
& \hspace{45mm} \,\, \sum_j \|\psi_j\|_2^2\leq 1,  \sum_j \|\phi_j\|_2^2\leq 1\bigg\} \\
&=\sup_{\heap{\psi_j \in H^1(\R^3)}{\sum_j \|\psi_j\|_2^2\,\,\leq 1}}  \,\,\,\,\sum_j\bigg\{\int_{\R^3} \d x \, \psi_j^4(x) \, -  \|\nabla\psi_j\|_2^2\bigg\},
\end{aligned}
$$
where the last identity follows from the bound $2 \psi^2 \phi^2 \leq \psi^4+\phi^4$. We will show that the last supremum is achieved when the collection $\{\psi_j\}_j$ consists of only one function $\psi$
with total mass $\|\psi\|_2=1$. This follows from the following strict super-additivity: If we denote by
$$
\chi(m)= \sup_{\int_{\R^3} \psi^2=m} \bigg\{\int_{\R^3} \psi^4(x) \, \d x- \|\nabla\psi\|_2^2\bigg\}>0,
$$
then we will show that $\chi(m_1+m_2)>\chi(m_1)+\chi(m_2)$. Indeed, for any $\psi$ with $\|\psi\|_2=1$, let us invoke the rescaling $\psi_m(x):= m^2 \psi(mx)$, so that $\int_{\R^3} \psi_m^2(x) \,\d x=m$, and 
$$
\int_{\R^3} \psi_m^4(x) \, \d x- \int_{\R^3} \d x \,\,\big|\nabla\psi_m(x)\big|^2= m^5 \int_{\R^3} \d x \,\, \psi^4(x) \, \d x- m^3 \int_{\R^3} \d x \,\,\big|\nabla\psi_(x)\big|^2
$$
Then,
$$
\begin{aligned}
\chi(m_1+m_2)&=(m_1+m_2)^5 \bigg(\int_{\R^3} \d x \,\, \psi^4(x) \, \d x- \frac 1{(m_1+m_2)^2} \int_{\R^3} \d x \,\,\big|\nabla\psi_(x)\big|^2\bigg) \\
&>\sum_{i=1}^2m_i^5\bigg(\int_{\R^3} \d x \,\, \psi^4(x) \, \d x- \frac 1{(m_1\vee m_2)^2} \int_{\R^3} \d x \,\,\big|\nabla\psi_(x)\big|^2\bigg)\\
&\geq \sum_{i=1}^2m_i^5\bigg(\int_{\R^3} \d x \,\, \psi^4(x) \, \d x- \frac 1{m_i^2} \int_{\R^3} \d x \,\,\big|\nabla\psi_(x)\big|^2\bigg)\\
&=\chi(m_1)+\chi(m_2).
\end{aligned}
$$
This proves that
\begin{equation}\label{chidef}
\lim_{t\to\infty}\frac 1t\log Z_t= \sup_{\|\psi\|_2=1}\bigg\{\int_{\R^3} \d x \, \psi^4(x) \,\,-  \|\nabla\psi\|_2^2\bigg\}=\chi.
\end{equation}
It is well-known (\cite{T76}) that the above variational problem has a 
smooth, rotationally symmetric maximizer $\psi_0\in H^1(\R^3)$ which decays at infinity with $\|\psi_0\|_2=1$,
and $\psi_0$ is unique modulo translations in $\R^3$.

Now recall the definition of $\mathbb Q_t$ from \eqref{Qdef}. Then Theorem \ref{thm-LDPBrowInt} and \eqref{eq-totalmass} again imply that the distributions $\mathbb Q_t \circ \big(t^{-2} \widetilde{\ell}_t\big)^{-1}$ satisfy a large deviation principle in the space $\widetilde{\Mcal}^\N$ with rate function $\chi- \Theta(\cdot)$ where
$$
\Theta(\xi)= \sum_j\bigg(\int_{\R^3} \d x \, \psi_j^2(x)\phi_j^2(x) \, - \frac 12 \|\nabla\psi_j\|_2^2-\frac 12 \|\nabla\phi_j\|_2^2\bigg)
$$
if $\xi=\{\gamma_j\}_j$ and $\gamma_j(\d x)= \psi_j^2(x)\phi_j^2(x) \d x$ and $\sum_j \|\psi_j\|_2^2\leq 1$, $\sum_j \|\phi_j\|_2^2\leq 1$, else $\Theta(\xi)=\infty$.
In particular, for any closed set $\widetilde F\subset\widetilde{\Mcal}^\N$, 
$$
\limsup_{t\to\infty}\frac 1t \log \mathbb Q_t\big\{t^{-2}\widetilde\ell_t \,\in\,\widetilde F\big\} \leq \, -\inf_{\xi\in\widetilde F} \big[\chi- \Theta(\xi)\big].
$$
Let us denote by $\gamma_0(\d x)= \psi_0^4(x) \d x \in \Mcal(\R^3)$, where $\psi_0$ is the unique (modulo translation) maximizer of the variational problem
\eqref{chidef}, and $\{\widetilde\gamma_0\}\in \widetilde{\Mcal}\subset\widetilde\Mcal^\N$. 
If we now choose $\widetilde F$ to be the complement of any open neighborhood of $\{\widetilde\gamma_0\}$, then the uniqueness of
 $\psi_0$ implies that the above infimum is strictly positive and
$$
\mathbb Q_t \circ \big(t^{-2} \widetilde{\ell}_t\big)^{-1} \Rightarrow \delta_{\widetilde\gamma_0},
$$
proving the claim \eqref{eq-cor} and Theorem \ref{thm2-LDPBrowInt}.
\end{proof}

{\bf{Proof of Theorem \ref{thm-LDPBrowInt}:}} The proof of this theorem consists of three main steps carried out in Section \ref{sec-mollify}- Section \ref{sec-Gamma},
and the statement of the Theorem follows directly by combining Lemma \ref{epsLDP}, Lemma \ref{lemma-superexp} and Lemma \ref{gammalimit}.
\qed



\subsection{Mollification in the compactification $\widetilde{\mathcal X}^\otimes$.}\label{sec-mollify}
 Recall that $L^{\ssup 1}_t$ and $L^{\ssup 2}_t$ denote the normalized occupation measures of two independent  Brownian paths
observed until time $t$. In this step, we will smoothen each occupation measure with respect to some fixed mollifier $\varphi_\eps$, and take a pointwise
product of the mollified densities. Then the orbits of these densities satisfy a LDP in the space $\widetilde{\Mcal}^\N$ with an $\eps$-dependent
rate function. The precise statement can be found in Lemma \ref{epsLDP}.

Let $\varphi_\eps$ be a smooth mollifier, i.e., if $\varphi=\varphi_1$ is a non-negative, $\mathcal C^\infty$-function on $\R^3$ having compact support with
with $\int_{\mathbb R^3}\varphi(y)\,\d y=1$, we take
$$
\varphi_\eps(x)=\eps^{-d}\varphi(x/\eps).
$$
Then $\int_{\R^3} \varphi_\eps=1$ and $\varphi_\eps\Rightarrow \delta_0$ weakly as probability measures. 

Recall that for any $\alpha, \beta\in \Mcal_{\leq 1}$, we denote by $\alpha\beta=\alpha\otimes\beta\in \Mcal_{\leq 1}^\otimes$ the product measure. 
and by
$$
\alpha_\eps(x)= \big(\alpha\star\varphi_\eps\big)(x), \qquad \beta_\eps(x)=(\beta\star\varphi_\eps)(x),
$$
the mollified densities of $\alpha$ and $\beta$. Let us recall the compact metric space
$$
\widetilde{\mathcal X}^\otimes= \bigg\{\big\{\widetilde{\alpha_j\beta_j}\big\}_j\colon\,\,\alpha_j,\beta_j\in\Mcal_{\leq 1}, \sum_j\alpha_j(\R^3)\leq 1, \sum_j\beta_j(\R^3)\leq 1\bigg\},
$$ 
defined in \eqref{Xotimes}.
Then we consider the mapping
\begin{equation}\label{contmap}
\begin{aligned}
&\widetilde{\mathcal X}^\otimes \longrightarrow \widetilde{\Mcal}^\N \\
&\big\{\widetilde{\alpha_j\beta_j}\big\}_j \mapsto \big\{\widetilde\gamma_{j,\eps}\big\}_j,
\end{aligned}
\end{equation}
where for each $j$ and fixed $\eps>0$, we write 
$$
\gamma_{j,\eps}(\d x)= \alpha_{j,\eps}(x)\, \beta_{j,\eps}(x)\,\, \d x \in \Mcal(\R^3).
$$
We claim that this mapping is continuous. Since the space $\widetilde\Mcal_{\leq 1}^\otimes$ is dense in $\widetilde{\mathcal X}^\otimes$ (recall Theorem \ref{thmcompact}), 
for this continuity assertion it suffices to check that, for any $\alpha,\beta\in \Mcal_{\leq 1}$, the map
$$
\widetilde\Mcal_{\leq 1}^\otimes\,\,\ni \widetilde{\alpha\beta} \mapsto\widetilde\gamma_\eps \,\,\in \widetilde\Mcal\,\, \hookrightarrow\widetilde{\Mcal}^\N
$$ 
is weakly continuous, where $\gamma_\eps(\d x)=\alpha_\eps(x)\beta_\eps(x) \, \d x$. 
Indeed, for every continuous bounded test function $f\colon \R^3\to\R$ and any $\alpha, \beta\in\Mcal_{\leq 1}$, we have
\begin{equation*}
\begin{aligned}
\big\langle f,\gamma_\eps\big\rangle=\big\langle f, \alpha_\eps\beta_\eps\big\rangle
&=\int_{\R^d}\,\d x f(x) \int_{(\R^3)^2}
\alpha(\,\d y_1)\,\beta(\,\d y_2) 
\hspace{1mm}\varphi_\eps(x-y_1)\,\varphi_\eps(x-y_2)\\
&=\big\langle A_f,\alpha\otimes\beta\big\rangle,
\end{aligned}
\end{equation*}
where
$$
A_f(y_1,y_2)= \int_{\mathbb{R}^3}\d x\, f(x)
\hspace{1mm}\varphi_\eps(x-y_1)\,\varphi_\eps(x-y_2),
$$
and as usual, for any function $g$ and any measure $\mu$ we denoted by $\langle g,\mu\rangle$ the integral $\int g \d\mu$. For each fixed $\eps>0$, since $\varphi_\epsilon$ is smooth and compactly supported in $\R^3$, the function $A_f$ is continuous and bounded in $\mathbb{R}^{6}$. This shows the continuity of the map in \eqref{contmap}. 

Let us consider the mollified occupation densities 
\begin{equation*}
L^{\ssup i}_{\eps,t}(y)=\varphi_\eps\star L^{\ssup i}_t(y)=\frac 1t \int_0^t\d s\hspace{1mm}\varphi_{\eps}(W^{\ssup i}_s-y) \qquad i=1,2.
\end{equation*}
For each fixed $\eps>0$, these are smooth bounded functions in $\R^3$.
We also denote by 
$$
L^\otimes_{\eps,t}=L^{\ssup 1}_{\eps,t}\otimes L^{\ssup 2}_{\eps,t} \,\,\in \,\,\Mcal_{1} ^\otimes
$$ 
and as usual, we identify its orbit 
$\widetilde L^\otimes_{\eps,t}$ an an element in $\widetilde{\mathcal X}^\otimes$. 
Let us consider 
\begin{equation}\label{ellepst}
t^{-2}\ell_{\eps,t}(y)= \prod_{i=1}^2 L^{\ssup i}_{\eps,t}(y).
\end{equation}
the point-wise product of the mollified occupation densities. Then $t^{-2} \ell_{\eps,t}(y) \d y \in \Mcal$,
and its orbit $t^{-2}\widetilde\ell_{\eps,t} \in \widetilde{\Mcal}^\N$ is the continuous image of $\widetilde L^\otimes_{\eps,t}$ 
under the map \eqref{contmap} in the space $\widetilde{\Mcal}^\N$. By Theorem \ref{thmldp}, the distributions of $\widetilde L^\otimes_{\eps,t}$ in the space $\widetilde{\mathcal X}^\otimes$ satisfy a
large deviation principle, and hence, by contraction principle, we have proved the following lemma:

\begin{lemma}\label{epsLDP}
For each fixed $\eps>0$,  the distributions of $\big(t^{-2}\widetilde{\ell_{\eps,t}}\big)_{t\geq 0}$ in the space $\widetilde{\Mcal}^\N$ satisfy a LDP with rate function
\begin{equation}\label{J_eps}
\begin{aligned}
\mathfrak J_\eps\big(\xi)=
\inf\Big\{\frac{1}{2}\sum_{j}\bigg(\|\nabla\psi_j\|_2^2+\|\nabla\phi_j\|_2^2\bigg)\colon\,\,\frac{\d\gamma_j}{\d x}&=\psi_{j, \eps}^2\,\phi_{j,\eps}^2\,\,\mbox{ for }\,\, \psi_j,\,\phi_j\in H^1(\R^3)\,\,  \forall j\,\mbox { and }\\
 &\sum_j \|\psi_j\|_2^2\leq 1, \, \sum_j \|\phi_j\|_2^2\leq 1\bigg\} ,
\end{aligned}
\end{equation}
for any $\xi=\{\widetilde \gamma_j\}_j\in \widetilde{\Mcal}^\N$ such that for each $j$, $\gamma_j$ has a density, while $\mathfrak J_\eps(\xi)=\infty$ otherwise, and in the variational formula \eqref{J_eps}, we wrote $\psi^2_{j,\eps}=\psi_j^2\star\varphi_\eps$
and $\phi_{j,\eps}^2= \phi_j^2 \star \varphi_\eps$.
\end{lemma}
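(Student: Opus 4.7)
The plan is to derive Lemma \ref{epsLDP} as a direct application of the contraction principle. Theorem \ref{thmldp} already provides a full large deviation principle for the distributions of $\widetilde{L}_t^\otimes$ under $\P^\otimes$ in the compact metric space $\widetilde{\mathcal X}^\otimes$ with rate function $\widetilde I$ from \eqref{Itilde}. The preceding paragraphs have also verified that the mollification map
$$
\Phi_\eps\colon \widetilde{\mathcal X}^\otimes \longrightarrow \widetilde{\Mcal}^\N,\qquad \big\{\widetilde{\alpha_j\beta_j}\big\}_j \longmapsto \big\{\widetilde{\gamma_{j,\eps}}\big\}_j,\quad \gamma_{j,\eps}(\d x)=\alpha_{j,\eps}(x)\beta_{j,\eps}(x)\,\d x,
$$
is continuous, and that $t^{-2}\widetilde{\ell_{\eps,t}} = \Phi_\eps\big(\widetilde L_t^\otimes\big)$ by the definition \eqref{ellepst}. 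Hence the contraction principle immediately yields an LDP for $t^{-2}\widetilde{\ell_{\eps,t}}$ in $\widetilde{\Mcal}^\N$ with rate function
$$
\mathfrak J_\eps(\xi) \;=\; \inf\Big\{\widetilde I(\eta^\otimes)\colon \eta^\otimes \in \widetilde{\mathcal X}^\otimes,\ \Phi_\eps(\eta^\otimes)=\xi\Big\}.
$$

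The remaining task is therefore purely variational: identify this contracted infimum with the expression in \eqref{J_eps}. Fix $\xi=\{\widetilde\gamma_j\}_j \in \widetilde{\Mcal}^\N$. For $\widetilde I(\eta^\otimes)$ to be finite on a preimage $\eta^\otimes=\{\widetilde{\alpha_j\beta_j}\}_j$, each $\alpha_j$ and $\beta_j$ must lie in the effective domain of the Donsker--Varadhan functional $I$; by \eqref{Idef} this forces $\alpha_j=\psi_j^2\,\d x$ and $\beta_j=\phi_j^2\,\d x$ with $\psi_j,\phi_j\in H^1(\R^d)$, and then $I(\alpha_j)=\tfrac12\|\nabla\psi_j\|_2^2$, $I(\beta_j)=\tfrac12\|\nabla\phi_j\|_2^2$. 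The mass constraints defining $\widetilde{\mathcal X}^\otimes$ become $\sum_j \|\psi_j\|_2^2\le 1$ and $\sum_j\|\phi_j\|_2^2\le 1$. The constraint $\Phi_\eps(\eta^\otimes)=\xi$ translates into the pointwise identity $\d\gamma_j/\d x = (\psi_j^2\star\varphi_\eps)(\phi_j^2\star\varphi_\eps) = \psi_{j,\eps}^2\,\phi_{j,\eps}^2$ in the notation of the lemma. Substituting these reformulations into $\widetilde I(\eta^\otimes)$ and taking the infimum over admissible $(\psi_j,\phi_j)_j$ recovers precisely \eqref{J_eps}; if no such representation exists (in particular if some $\gamma_j$ fails to have a density), the infimum is vacuous and $\mathfrak J_\eps(\xi)=\infty$.

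The main technical point to check carefully is the reduction of the contracted infimum to the stated variational form: one must confirm that each element of $\widetilde{\Mcal}^\N$ with finite $\mathfrak J_\eps$ admits some preimage in $\widetilde{\mathcal X}^\otimes$ (so that the rate is not spuriously infinite) and that the diagonal-orbit structure on both sides is preserved, i.e.\ a common translation $\widetilde{\alpha_j\beta_j}$ on the source side maps to the single orbit $\widetilde{\gamma_{j,\eps}}$ on the target side. Both facts follow directly from the diagonal shift-equivariance of $\Phi_\eps$ and from the freedom to choose $\psi_j,\phi_j$ supported around any shift representative; there are no further obstructions, and once these bookkeeping issues are resolved the lemma follows.
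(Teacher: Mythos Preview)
Your proposal is correct and follows exactly the paper's approach: the contraction principle applied to the LDP of Theorem~\ref{thmldp} through the continuous mollification map~\eqref{contmap}, with $t^{-2}\widetilde{\ell_{\eps,t}}=\Phi_\eps(\widetilde L_t^\otimes)$. Your explicit identification of the contracted infimum with the variational formula~\eqref{J_eps} is in fact slightly more detailed than the paper, which simply states the lemma immediately after invoking the contraction principle.
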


\subsection{A super exponential estimate.}\label{sec-BrowInt-proof}

 We remark that $t^{-2}\ell_{\eps,t}$ for any fixed $t$, is an approximation of the 
(rescaled) intersection measure $t^{-2}\ell_t$ as $\eps\downarrow 0$. 
In this step, we will go much further and show that they are an ``exponentially good approximation" of $t^{-2}\ell_t$.
This is the content of the next lemma. 

\begin{lemma}\label{lemma-superexp}
For any $a>0$, 
\begin{equation}\label{eq-superexp}
\limsup_{\eps\downarrow 0} \limsup_{t\uparrow\infty} \frac 1t \log \P^\otimes\big\{\d\big(t^{-2}\widetilde\ell_t, t^{-2} \widetilde\ell_{\eps,t}\big)>a\big\}=-\infty
\end{equation}
where $\d$ is a metric that induces the product topology on the space $\widetilde\Mcal^{\N}$.
\end{lemma}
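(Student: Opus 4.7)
Plan. My plan combines a reduction to a single test function with exponential Chebyshev and a moment expansion whose Fourier-side cancellation is provided by the fact that $\hat\varphi(\eps\xi)\to 1$ pointwise as $\eps\downarrow 0$.

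First, since both $t^{-2}\ell_t$ and $t^{-2}\ell_{\eps,t}$ are genuine finite measures on $\R^3$, the distance $\d\bigl(\widetilde{t^{-2}\ell_t},\widetilde{t^{-2}\ell_{\eps,t}}\bigr)$ in $\widetilde\Mcal^\N$ between their singleton orbits is bounded, via the trivial shift, by any metrisation of the vague topology on $\Mcal(\R^3)$. Since the vague topology is generated by countably many continuous functionals $\mu\mapsto\int g\,\d\mu$ with $g\in C_c(\R^3)$, a standard telescoping/tail bound reduces the claim to showing that for every fixed $f\in C_c(\R^3)$ and every $a>0$,
$$\limsup_{\eps\downarrow 0}\limsup_{t\uparrow\infty}\frac{1}{t}\log \P^\otimes\bigl(|Y_{\eps,t}(f)|>a\bigr)=-\infty,$$
where $Y_{\eps,t}(f)=\int f\,\d(t^{-2}\ell_t)-\int f(y)(t^{-2}\ell_{\eps,t})(y)\,\d y$. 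Applying exponential Chebyshev then gives $\frac{1}{t}\log \P(|Y_{\eps,t}|>a)\le-\lambda a+\frac{1}{t}\log\E^\otimes[\exp(\lambda t|Y_{\eps,t}|)]$, so it suffices to establish, for every $\lambda>0$,
$$\limsup_{\eps\downarrow 0}\limsup_{t\uparrow\infty}\frac{1}{t}\log \E^\otimes\bigl[\exp(\lambda t|Y_{\eps,t}(f)|)\bigr]\le 0,$$
after which sending $\lambda\to\infty$ extracts the $-\infty$.

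To analyse this log-MGF I would insert an auxiliary mollification scale $\eps'\ll\eps$, invoke the $L^p$-convergence $\ell_{\eps',t}\to\ell_t$ that underlies any rigorous construction of $\ell_t$ (Geman--Horowitz--Rosen \cite{GHR84}, Le Gall \cite{LG86}), and by Fatou reduce the task to a \emph{uniform} bound on $\sup_{0<\eps'<\eps}\E^\otimes[\exp(\lambda t|X_{\eps,t}(f)-X_{\eps',t}(f)|)]$, with $X_{\eta,t}(f)=\int f\,L^{\ssup 1}_{\eta,t}L^{\ssup 2}_{\eta,t}\,\d y$. Expanding the exponential in $\lambda t$ and writing
$$X_{\eta,t}(f)=t^{-2}\int_0^t\!\int_0^t G_\eta\bigl(W^{\ssup 1}_\sigma,W^{\ssup 2}_s\bigr)\,\d\sigma\,\d s,\qquad G_\eta(u,v)=\int f(y)\varphi_\eta(u-y)\varphi_\eta(v-y)\,\d y,$$
and inserting the Fourier representation $G_\eta(u,v)=(2\pi)^{-6}\int\hat f(\xi_1+\xi_2)\hat\varphi(\eta\xi_1)\hat\varphi(\eta\xi_2)e^{i(\xi_1\cdot u+\xi_2\cdot v)}\,\d\xi_1\,\d\xi_2$, the kernel of $G_\eps-G_{\eps'}$ carries the cancelling weight $\hat\varphi(\eps\xi_1)\hat\varphi(\eps\xi_2)-\hat\varphi(\eps'\xi_1)\hat\varphi(\eps'\xi_2)$, which tends to zero pointwise and uniformly on compacts as $\eps,\eps'\downarrow 0$. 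Independence of $W^{\ssup 1}$ and $W^{\ssup 2}$ together with the Gaussian characteristic function turn the $k$th moment $\E^\otimes[|X_{\eps,t}(f)-X_{\eps',t}(f)|^k]$ into a combinatorial sum of iterated Brownian Green-function convolutions in the Fourier variables, of the same loop type that appears in standard moment estimates for Brownian intersection measures.

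The main obstacle is to extract from this sum a genuine vanishing factor $\omega(\eps)^k$ \emph{uniformly in $t$}, so as to obtain a bound of the form $\E^\otimes[|X_{\eps,t}(f)-X_{\eps',t}(f)|^k]\le C^k(k!)^{-1}t^{2k}\omega(\eps)^k$ and conclude that $\sum_k(\lambda t)^k\E[|Y_{\eps,t}|^k]/k!\le\exp(t\cdot o_\eps(1))$. Since the full-space $k$th moments of the intersection mass $\ell_t(\R^3)$ already grow like $(k!)C^k t^{2k}$, achieving this requires careful control of the interplay between the $k!$ combinatorial explosion produced by nested Green-function integrals and the Fourier cutoff $1-|\hat\varphi(\eps\xi)|^2$ acting against the singularity of $(-\Delta)^{-1}$ in $\R^3$. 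This is the infinite-volume analogue of the eigenvalue-truncation/spectral step that \cite{KM13} carried out in a bounded box, and it constitutes the substantial technical work alluded to in Section \ref{sec-heur-compact}; the rest of the proof is bookkeeping.
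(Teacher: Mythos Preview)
Your reduction to a single compactly supported test function is the same as the paper's, but the exponential Chebyshev route has a fatal obstruction: the exponential moment $\E^\otimes[\exp(\lambda t|Y_{\eps,t}|)]$ is \emph{infinite} for every $\lambda>0$, $t>0$ and $\eps>0$. Indeed $\langle f,\ell_{\eps,t}\rangle$ is deterministically bounded (since $L^{\ssup i}_{\eps,t}\le\|\varphi_\eps\|_\infty$), so the upper tail of $Y_{\eps,t}$ is that of $t^{-2}\langle f,\ell_t\rangle$; but the intersection local time in $\R^3$ has only stretched-exponential tails, equivalently $\E^\otimes[\langle f,\ell_t\rangle^k]$ grows like $(k!)^2$ in $k$. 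Your hoped-for bound $\E^\otimes[|X_{\eps,t}-X_{\eps',t}|^k]\le C^k(k!)^{-1}t^{2k}\omega(\eps)^k$ is therefore off by a factorial power, and no Fourier cancellation from $1-\hat\varphi(\eps\xi)$ can repair this: that factor contributes only a geometric gain $\omega(\eps)^k$, never a factorial one. Your own observation that the moments of $\ell_t(\R^3)$ already grow like $k!\,C^k t^{2k}$ flags the problem (and in fact understates it).

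The paper avoids this by replacing the exponential MGF with \emph{polynomial} Chebyshev at the moving order $k=\lceil t\rceil$, combined with a device you do not invoke: integration against an independent exponential time $\tau$. After time-ordering, the identity
\[
\int_0^\infty e^{-r}\int_{0\le s_1\le\cdots\le s_k\le r}\prod_{j=1}^k e^{-(s_j-s_{j-1})\big|\sum_{l\ge j}\lambda_{\sigma(l)}\big|^2}\,\d s\,\d r
=\prod_{j=1}^k\frac{1}{1+\big|\sum_{l\ge j}\lambda_{\sigma(l)}\big|^2}
\]
converts $\overline\E[\langle f,\ell_\tau-\ell_{\eps,\tau}\rangle^k]$ into an explicit $(k!)^2$-weighted integral over $(\R^3)^k$ of products of resolvent kernels $(1+|\cdot|^2)^{-2}$ against the cancelling factors $1-\hat\varphi_\eps$. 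Using $\int_{\R^3}(1+|\lambda|^2)^{-2}\,\d\lambda<\infty$ and a counting argument on how many Fourier variables are large yields $\overline\E[\langle f,\ell_\tau-\ell_{\eps,\tau}\rangle^k]\le (k!)^2\,\delta(\eps)^k$ with $\delta(\eps)\to 0$. Plugging this into Chebyshev with $k=\lceil t\rceil$, the $(k!)^2$ is exactly balanced by the factors $t^{-2k}$ and $\Gamma(1+k/2)^{-1}$ coming from the scaling relation between fixed-time and exponential-time moments, and Stirling's formula delivers \eqref{eq1-superexp}. The exponential-time trick is not merely a convenience: it is what makes the $k$-th moment explicitly computable and what produces the precise $(k!)^2$ growth that the choice $k\approx t$ is calibrated to absorb.
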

\begin{proof}
We first note that $\widetilde\ell_t$ and $\widetilde\ell_{\eps,t}$ are elements of the single orbit space $\widetilde\Mcal$ of finite measures in $\R^3$,
equipped with the vague topology (with the quotient operation) on
$\widetilde\Mcal$ generated by test integral with respect to continuous functions with compact support. Hence, to derive
\eqref{eq-superexp}, it is enough to show that for any continuous
function $f: \R^3 \to \R$ with compact support,
\begin{equation}\label{eq1-superexp}
\limsup_{\eps\downarrow 0} \limsup_{t\uparrow\infty} \frac 1t \log \P^\otimes\big\{ t^{-2}\big|\big\langle f, \, \ell_t- \ell_{\eps,t}\big\rangle\big|>a\big\}=-\infty.
\end{equation}
Note that by Chebycheff's inequality, for any $k\in \N$, 
\begin{equation}\label{chebyshev}
\P^\otimes\big\{ t^{-2}\big|\big\langle f, \, \ell_t- \ell_{\eps,t}\big\rangle\big|>a\big\} \leq a^{-k} \,\, t^{-2k}\,\, \E^\otimes\bigg\{\big|\big\langle f, \, \ell_t- \ell_{\eps,t}\big\rangle\big|^k\bigg\},
\end{equation}
Actually, owing to technical reasons, we choose to work with an random time horizon $\tau$ instead of a fixed time horizon $t$,
and a simple scaling argument shows that required estimates on the moments $\E^\otimes[\langle f, \, \ell_t- \ell_{\eps,t}\rangle^k]$ follow from the bounds for the moments of $\langle f, \ell_\tau-\ell_{\eps,\tau}\rangle$. Indeed, let us denote by $\tau$ an exponential time with parameter $1$, which is independent of both $W^{\ssup 1}$ and $W^{\ssup 2}$. If $E$ denotes expectation with respect to $\tau$, we write $\overline\E=E\otimes \E^{\otimes}$. Let us note that, by Brownian scaling, for any constant $\theta>0$ and any $r>0$, 
$\E^\otimes[\langle \ell_{\theta r}, f\rangle^k]=\,\, \theta^{k/2} \,\,\E^\otimes[\langle \ell_{r}, f\rangle^k]$.
Then, 
$$
\begin{aligned}
\overline\E\bigg\{\langle f, \ell_\tau-\ell_{\eps,\tau}\rangle^k\bigg\}&= E\big[(\tau t^{-1})^{k/2}\big] \, \,\E^\otimes\bigg\{\big|\big\langle f, \, \ell_t- \ell_{\eps,t}\big\rangle\big|^k\bigg\}\\
&= t^{-k/2} \, \Gamma\bigg(1+\frac k 2\bigg) \,\,\E^\otimes\bigg\{\big|\big\langle f, \, \ell_t- \ell_{\eps,t}\big\rangle\big|^k\bigg\}
\end{aligned}
$$
If we combine this estimate with \eqref{chebyshev}, we get,
\begin{equation}\label{eq1.5-superexp}
\P^\otimes\big\{ t^{-2}\big|\big\langle f, \, \ell_t- \ell_{\eps,t}\big\rangle\big|>a\big\} \leq \bigg[a^{-k} \,\,\frac{t^{k/2}}{\Gamma\big(1+\frac k 2\big)}\bigg]\,\, \,t^{-2k}\,\,\, \overline\E\bigg\{\big|\big\langle f, \, \ell_\tau- \ell_{\eps,\tau}\big\rangle\big|^k\bigg\},
\end{equation}
We will show that,
\begin{equation}\label{eq2-superexp}
\lim_{\eps\downarrow 0} \limsup_{k\uparrow\infty} \frac 1k \log \bigg[\frac 1 {k!^2} \,\,\overline\E\bigg\{\big|\big\langle f, \, \ell_\tau- \ell_{\eps,\tau}\big\rangle\big|^k\bigg\}\bigg]=-\infty.
\end{equation}
In \eqref{eq1.5-superexp} if we choose $k=\lceil t\rceil$ and apply Stirlings's formula, then \eqref{eq1-superexp} follows from \eqref{eq2-superexp}. 

We remark that it suffices to prove \eqref{eq2-superexp} without the absolute value inside the expectation, 
since for $k\to\infty$ along even numbers, we can simply drop the absolute value in \eqref{eq2-superexp}, and when when $k$ is odd, we can use Jensen's inequality to go from  the power $k$ to $k+1$ and use that $((k+1)!^2)^{k/(k+1)}\leq k!^2 C^k$ for some $C\in(0,\infty)$ and all $k\in\N$.

Hence we owe the reader only the proof of \eqref{eq2-superexp} without the absolute value inside the expectation. 
For any $\lambda\in \R^3$, let $\widehat\varphi_\eps(\lambda)= \int_{\R^3} \d x \, \e^{\mathbf i \, \langle\lambda, x\rangle} \, \varphi_\eps(x)$ denote the Fourier coefficient of the
mollifier $\varphi_\eps$ so that $|\widehat\varphi_\eps(\lambda) - 1| \to 0$ as $\eps\to 0$.
Then by the Fourier inversion formula, 
$$
\langle\ell_{\eps,t}, f \rangle= C \int_{\R^3} \d \lambda \,\,\widehat f(\lambda)\, \widehat\varphi_\eps(\lambda)\,\,\int_0^t\int_0^t \d\sigma\,\d s \,\, \e^{\mathbf i \langle \lambda, W^{\ssup 1}_\sigma- W^{\ssup 2}_s\rangle}
$$
for some positive constant $C$. Hence,
$$
\begin{aligned}
\E^\otimes\Big[\big\langle f, \ell_\tau-\ell_{\eps,\tau}\big\rangle^k\Big] 
&= C^k\int_{(\R^3)^k} \bigg(\prod_{j=1}^k \d \lambda_j \, \widehat f(\lambda_j)\,\big[ 1- \widehat\varphi_\eps(\lambda_j)\big]\bigg)  \\
&\qquad\times\prod_{l=1}^2\bigg[\int_{[0,\tau]^k}\d s_1\dots \d s_k \,\,\E^{\ssup 1}\bigg\{ \e^{\mathbf i \sum_{j=1}^k \langle \lambda_j , W^{\ssup l}_{s_j}\rangle}\bigg\}\bigg] 
\end{aligned}
$$
If we abbreviate 
$$
\int_{[0,\tau]^k_\leq} \d s = \int_{0\leq s_1\leq\dots\leq s_k\leq \tau} \d s_1\dots\d s_k
$$ 
and for any Brownian path $W$, invoke time-ordering and Markov property, we get
$$
\begin{aligned}
&\int_{[0,\tau]^k}\d s_1\dots \d s_k \,\, \E\bigg\{ \e^{\mathbf i \sum_{j=1}^k \langle \lambda_j , W_{s_j}\rangle}\bigg\} \\
&= \sum_{\sigma \in \mathfrak S_k} \int_{[0,\tau]^k_{\leq}} \d s \,\,\E\bigg\{ \e^{\mathbf i \sum_{j=1}^k \langle \lambda_{\sigma(j)} , W_{s_j}\rangle}\bigg\} \\
&= \sum_{\sigma \in \mathfrak S_k} \int_{[0,\tau]^k_{\leq}} \d s \,\,\E\bigg\{ \exp\bigg\{\mathbf i \sum_{j=1}^k \bigg\langle \sum_{l=j}^k\lambda_{\sigma(l)} , W_{s_j}-W_{s_{j-1}}\bigg\rangle\bigg\}\bigg\} \\
&= \sum_{\sigma \in \mathfrak S_k} \int_{[0,\tau]^k_{\leq}} \d s \,\,\prod_{j=1}^k\exp\bigg\{-(s_j-s_{j-1})\bigg| \sum_{l=j}^k\lambda_{\sigma(l)}\bigg|^2 \bigg\},
\end{aligned}
$$
where $\mathfrak S_k$ denotes the permutation group of $\{1,\dots,k\}$. 
Recall that $E$ denotes expectation with respect to the exponential time $\tau$ and we write $\overline \E=E\otimes\,\E^\otimes$. Then
$$
\begin{aligned}
\overline \E\Big[\big\langle f, \ell_\tau&-\ell_{\eps,\tau}\big\rangle^k\Big]
=C^k \int_{(\R^3)^k} \bigg(\prod_{j=1}^k \d \lambda_j \, \widehat f(\lambda_j)\,\big[ 1- \widehat\varphi_\eps(\lambda_j)\big]\bigg) \\
&\qquad\qquad\times\bigg[\sum_{\sigma \in \mathfrak S_k} \int_0^\infty \d r \,\,\e^{-r} \,\,\int_{[0,r]^k_{\leq}} \d s 
\,\,\prod_{j=1}^k\exp\bigg\{-(s_j-s_{j-1})\bigg| \sum_{l=j}^k\lambda_{\sigma(l)}\bigg|^2 \bigg\}\bigg]^2 \\
&\qquad\qquad =C^k \int_{(\R^3)^k} \bigg(\prod_{j=1}^k \d \lambda_j \, \widehat f(\lambda_j)\,\big[ 1- \widehat\varphi_\eps(\lambda_j)\big]\bigg)  
\\&\qquad\qquad\qquad\qquad\times\bigg[\sum_{\sigma \in \mathfrak S_k} \prod_{j=1}^k \int_0^\infty \exp\bigg\{-r\bigg(1+\bigg| \sum_{l=j}^k\lambda_{\sigma(l)}\bigg|^2 \bigg)\bigg\}\bigg]^2 \\
&= C^k \int_{(\R^3)^k} \bigg(\prod_{j=1}^k \d \lambda_j \, \widehat f(\lambda_j)\,\big[ 1- \widehat\varphi_\eps(\lambda_j)\big]\bigg)  
\times\bigg[\sum_{\sigma \in \mathfrak S_k} \prod_{j=1}^k \frac 1{1+\big| \sum_{l=j}^k\lambda_{\sigma(l)}\big|^2}\bigg\}\bigg]^2 .
\end{aligned}
$$
Then applying Jensen's inequality to the sum $\sum_{\sigma\in \mathfrak S_k}$, we get
$$
\begin{aligned}
&\overline\E\Big[\big\langle f, \ell_\tau-\ell_{\eps,\tau}\big\rangle^k\Big] \\
&\leq  C^k k!^2\,\, \int_{(\R^3)^k} \prod_{j=1}^k \bigg[\d \lambda_j \, \bigg(\widehat f(\lambda_j)\,\big[ 1- \widehat\varphi_\eps(\lambda_j)\big]\bigg)  
 \,\,\bigg(\frac 1{1+\big| \sum_{l=j}^k\lambda_{l}\big|^2}\bigg)^2\bigg] \\
  &\leq   \widetilde C^k\,\, k!^2\,\, \int_{(\R^3)^k} \prod_{j=1}^k \bigg[\d \lambda_j \, \big[ 1- \widehat\varphi_\eps(\lambda_j- \lambda_{j-1})\big]
 \,\,\bigg(\frac 1{1+\big| \lambda_{j}\big|^2}\bigg)^2\bigg]  \end{aligned}
$$
Now the facts that that $|1-\widehat\varphi_\eps(\lambda)|\to 0$ as $\eps\to 0$ and 
\begin{equation}\label{integrability} 
\int_{\R^3} \frac{\d\lambda}{(1+|\lambda|^2)^{2}}<\infty,
\end{equation} 
can be exploited to check that
\begin{equation}\label{eq4-superexp}
\begin{aligned}
&\lim_{\eps\to 0}\limsup_{k\to \infty}\frac 1k \log \int_{(\R^3)^k} \prod_{j=1}^k \bigg[\d \lambda_j \, \,\big[ 1- \widehat\varphi_\eps(\lambda_j- \lambda_{j-1})\big].
 \,\,\bigg(\frac 1{1+\big| \lambda_{j}\big|^2}\bigg)^2\bigg]\\
& \qquad=-\infty.
\end{aligned}
 \end{equation}
 Indeed, we can decompose the spatial integral $\int_{(\R^3)^k}$ with $(\R^3)^k ={(I)_k} \cup {(II)_k}$ where
 $$
 (I)_k= \bigg\{(\lambda_1,\dots,\lambda_k)\in (\R^3)^k\colon \#\{1\leq j\leq k\colon |\lambda_j|\geq R\} \geq \eta k\bigg\},
 $$
 for some $\eta\in(0,1)$ and $R>1$. If we work with the mollifier $\varphi_\eps(x)=c \exp\{-\frac{|x|^2}{2\eps}\}$ supported on $B_\eps(0)$ so that $\widehat\varphi_\eps(\lambda)=c\exp\{-\eps^2|\lambda|^2/2\}$,
 then for any given $\delta>0$, we can choose $\lambda_0=\lambda_0(\eps)$ small enough so that $1-\widehat\varphi_\eps(\lambda)<\delta$ for $|\lambda|<\lambda_0$, while
 $\int_{B_R(0)^c} {\d\lambda}\, (1+|\lambda|^2)^{-2}<\delta$ for $R$ large enough, thanks to \eqref{integrability}. Then on the set $(I)_k$, we can ignore the terms $1-\widehat\varphi_\eps(\cdot)\leq 1$ and 
 take advantage of the fact that at least $\eta k$ of the $k$ integrals are taken outside the ball of radius $R$ around the origin and these integrals are therefore small, while
the other $(1-\eta)k$ integrals yield only some bounded exponential rate, i.e., 
$$
 \begin{aligned}
 \int_{A_k} \prod_{j=1}^k \,\, \frac{\d \lambda_j} {(1+| \lambda_{j}^2|)^{2}} \,\,\,
 & \leq \binom{k}{\eta k} \,\,\,\bigg(\int_{\R^3} \frac{\d \lambda} {(1+| \lambda|^2)^{2}}\bigg)^{(1-\eta)k}\,\,\, \bigg(\int_{B_R(0)^c} \frac{\d \lambda}{ (1+| \lambda|^2)^{2}}\bigg)^{\eta k}  \\
 &\leq C(\delta)^{\eta k}
 \end{aligned}
 $$
with $C(\delta)\to 0$ as $\delta\to 0$. On the complement $(II)_k$, we can also use that, for suitably chosen $\eta$, there are at least $\eta k$ indices $j\in\{1,\dots,k\}$ such that,
$1-\widehat\varphi_\eps(\lambda_j-\lambda_{j-1}) \leq \delta$ (for $\eps$ small enough) and deduce a similar estimate for the integral $\int_{(II)_k}$ as above. If we combine these two estimates, and send $\delta\to 0$, we end up with \eqref{eq4-superexp}. This proves \eqref{eq1-superexp} and thus Lemma \ref{lemma-superexp}.
 \end{proof}

\subsection{Gamma convergence in the compactification $\widetilde{\mathcal X}^\otimes$.}\label{sec-Gamma}

In this section, we conclude the proof of Theorem \ref{thm-LDPBrowInt}. Given Lemma \ref{epsLDP} and Lemma \ref{lemma-superexp}, 
the proof of Theorem \ref{thm-LDPBrowInt} is complete (see \cite{DZ98}, Theorem 4.2.16), provided
we establish the following identity, which resembles the notion of $\Gamma$-convergence. Its proof is based on Theorem \ref{thmcompact}.

\begin{lemma}\label{gammalimit}
For every $\xi\in \widetilde{\Mcal}^\N$,
\begin{equation}\label{gamma}
\sup_{\delta>0}\liminf_{\eps\downarrow 0}\inf_{U_\delta(\xi)}\hspace{1mm}\mathfrak J_\eps=\hspace{1mm}\mathfrak J(\xi),
\end{equation}
where $\mathfrak J$ and $\mathfrak J_\eps$ are the rate functions defined in \eqref{J} and \eqref{J_eps}, respectively, and $U_\delta(\xi)$ denotes a
ball of radius $\delta$ around $\xi$ in the space $\widetilde{\Mcal}^\N$.
\end{lemma}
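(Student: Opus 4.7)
The statement has the structure of a $\Gamma$-convergence identity and I would prove the two inequalities separately. The inequality $\sup_{\delta>0}\liminf_{\eps\downarrow 0}\inf_{U_\delta(\xi)}\mathfrak J_\eps\le \mathfrak J(\xi)$ I would obtain by a direct mollified construction, while the reverse inequality I would obtain by weak compactness in $H^1$ combined with the component-matching provided by Theorem~\ref{thmcompact}.

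For the upper bound, assume $\mathfrak J(\xi)<\infty$ and pick, for arbitrary $\eta>0$, a near-optimal representation $\gamma_j(\d x)=\psi_j^2(x)\phi_j^2(x)\,\d x$ in \eqref{J} with $\psi_j,\phi_j\in H^1(\R^3)$ and $\sum_j\|\psi_j\|_2^2,\sum_j\|\phi_j\|_2^2\le 1$. Form the natural approximant $\xi_\eps=\{\widetilde{\gamma_{j,\eps}}\}_j$ with $\gamma_{j,\eps}(\d x)=\psi_{j,\eps}^2(x)\phi_{j,\eps}^2(x)\,\d x$. By the Sobolev embedding $H^1(\R^3)\hookrightarrow L^6(\R^3)$ one has $\psi_j^2,\phi_j^2\in L^3(\R^3)$, so standard mollification gives $\psi_{j,\eps}^2\to\psi_j^2$ in $L^3(\R^3)$ and hence $\gamma_{j,\eps}\to\gamma_j$ in $L^{3/2}(\R^3)$; componentwise vague convergence implies $\xi_\eps\to\xi$ in $\widetilde{\Mcal}^\N$. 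Plugging the same $(\psi_j,\phi_j)$ into \eqref{J_eps} gives $\mathfrak J_\eps(\xi_\eps)\le \mathfrak J(\xi)+\eta$, and since $\xi_\eps\in U_\delta(\xi)$ for $\eps$ small, the upper bound follows on sending $\eta\downarrow 0$.

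For the lower bound, suppose the left-hand side of \eqref{gamma} is $\le C<\infty$ (else there is nothing to show) and pick sequences $\delta_n,\eps_n\downarrow 0$ and $\xi_n=\{\widetilde{\gamma_k^{(n)}}\}_k\in U_{\delta_n}(\xi)$ with $\mathfrak J_{\eps_n}(\xi_n)\le C+o(1)$, together with representatives $\psi_k^{(n)},\phi_k^{(n)}\in H^1(\R^3)$ saturating the mass and gradient bounds implicit in \eqref{J_eps}. For each fixed $j$, convergence in the product topology on $\widetilde{\Mcal}^\N$ provides matching indices $k_n(j)$ and centering shifts such that the shifted $\gamma_{k_n(j)}^{(n)}\to \gamma_j$ vaguely in $\Mcal(\R^3)$. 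The shifted pair $(\psi_{k_n(j)}^{(n)},\phi_{k_n(j)}^{(n)})$ is bounded in $H^1\times H^1$; extracting a weakly convergent subsequence with limit $(\psi_j,\phi_j)$, Rellich--Kondrachov yields strong convergence in $L^p_{\mathrm{loc}}$ for $p<6$, and the corresponding mollified products converge to $\psi_j^2\phi_j^2$ in $L^{3/2}_{\mathrm{loc}}$, identifying the density of $\gamma_j$. Weak lower semicontinuity of the $H^1$ seminorm, combined with Fatou's lemma and a diagonal extraction over $j$, then yields
\begin{equation*}
\sum_j\bigl(\|\nabla\psi_j\|_2^2+\|\nabla\phi_j\|_2^2\bigr)\le \liminf_{n\to\infty}\sum_k\bigl(\|\nabla\psi_k^{(n)}\|_2^2+\|\nabla\phi_k^{(n)}\|_2^2\bigr)\le 2C,
\end{equation*}
together with $\sum_j\|\psi_j\|_2^2,\sum_j\|\phi_j\|_2^2\le 1$, so $\mathfrak J(\xi)\le C$.

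The hardest part is the component-matching just sketched: one must ensure that distinct orbits $\widetilde\gamma_j,\widetilde\gamma_{j'}$ of $\xi$ are realised by distinct indices $k_n(j)\ne k_n(j')$ in the approximating $\xi_n$ and that the centering shifts keep these components widely separated as $n\to\infty$. This is exactly the geometric content of Theorem~\ref{thmcompact}: the wide-separation structure built into its proof is what forbids gradient contributions attached to different asymptotic components from being double-counted, and it is what legitimises the Fatou/diagonal step above. A secondary (but routine) technical point is to commute the vanishing mollification $\eps_n\to 0$ with the weak $H^1$ extraction, handled by the Sobolev/Rellich--Kondrachov argument already used. Once \eqref{gamma} is in place, Lemmas~\ref{epsLDP} and \ref{lemma-superexp} combine with \cite{DZ98}, Theorem~4.2.16, to conclude Theorem~\ref{thm-LDPBrowInt}.
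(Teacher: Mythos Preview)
Your upper bound is the same as the paper's: mollify the given $(\psi_j,\phi_j)$ and observe that the very same pair is admissible in the infimum defining $\mathfrak J_\eps$.

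For the lower bound your overall strategy (extract $H^1$-bounded representatives, pass to weak limits, use lower semicontinuity of the Dirichlet energy) is also the paper's. The gap is in the mechanism you propose for ``component matching''. You assert that distinct components $\widetilde\gamma_j,\widetilde\gamma_{j'}$ of $\xi$ are realised by \emph{distinct} indices $k_n(j)\ne k_n(j')$ in $\xi_n$, and that this prevents double counting. This is not guaranteed: a single pair $(\psi_k^{(n)},\phi_k^{(n)})$ may carry mass in several widely separated regions, so that the single orbit $\widetilde\gamma_k^{(n)}$ approximates several components of $\xi$ at once (via different centering shifts). In that situation the same $\|\nabla\psi_k^{(n)}\|_2^2$ must account for several limiting $\|\nabla\psi_j\|_2^2$, and weak lower semicontinuity applied to one shift at a time does not give that.

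The paper handles precisely this point not by insisting on distinct indices but by a partition-of-unity cutoff. For each fixed $j$ it views $\mu_\eps=\psi_j^{(\eps,\delta)\,2}\,\d x$, $\nu_\eps=\phi_j^{(\eps,\delta)\,2}\,\d x$ in $\widetilde{\mathcal X}^\otimes$, invokes Theorem~\ref{thmcompact} to obtain concentration centres $c_l^{(\eps)}$ with $r_\eps=\tfrac14\inf_{l\ne m}|c_l^{(\eps)}-c_m^{(\eps)}|\to\infty$, and sets $\psi_l^{(\eps)}(x)=\psi_\eps(x)\,\chi\big((x+c_l^{(\eps)})/r_\eps\big)$ for a fixed smooth cutoff $\chi$. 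Since $\sum_l\chi^2(\cdot)\le 1$ and the cross terms involving $\nabla\chi$ are $O(1/r_\eps)$, one gets $\sum_l\|\nabla\psi_l^{(\eps)}\|_2^2\le\|\nabla\psi_\eps\|_2^2+o(1)$, and each localised piece converges to the desired limit. Summing over $j$ and then letting $\delta\to 0$ (using again compactness in $\widetilde{\mathcal X}^\otimes$ and continuity of the map~\eqref{contmap} to identify the limiting densities of $\xi$) completes the argument. Your Rellich--Kondrachov step would slot in as the local identification of the density once the cutoffs are in place; what is missing in your write-up is the cutoff itself, which is the actual device that converts the wide-separation geometry of Theorem~\ref{thmcompact} into additivity of the Dirichlet energy.
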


\begin{Proof}{Proof.} 
We note that the fact that (L.H.S) $\leq$ (R.H.S) is easy. Let us fix $\xi=\{\widetilde\gamma_j\}_j\in \widetilde{\Mcal}^\N$ and any $\delta>0$. 
We can assume that the right hand side is finite (otherwise there is nothing to prove) so that, 
for each $j$ we have $\psi_j, \phi_j\in H^1(\R^3)$ with $\sum_j \|\psi_j\|_2^2\leq 1$ and $\sum_j \|\phi_j\|_2^2\leq 1$ and $\gamma_j(\d x)= \psi_j^2(x) \, \phi_j^2(x) \, \d x$. Then 
we can define $\lambda_j(\d x)= (\psi_j^2\star\varphi_\eps)(x)  (\phi_j^2\star\varphi_\eps)(x) \, \d x$ and choose $\eps>0$ small enough so that 
$\{\widetilde\lambda_j\}_j\in \widetilde{\Mcal}^\N$ lies in the ball $U_\delta(\xi)$. This shows that, for such $\eps>0$, 
$\mathfrak J_\eps(\{\widetilde\lambda_j\}_j) \leq \mathfrak J(\xi)$. 

Now we prove (L.H.S) $\geq$ (R.H.S): Let $\xi=\{\widetilde\gamma_j\}_j\in \widetilde{\Mcal}^\N$ be given, and we first assume that $\mathfrak J(\xi)$ is finite. Then, for any $\eps, \delta>0$, there exists 
$\xi^{\ssup{\eps,\delta}}= \{\widetilde\gamma_j^{\ssup{\eps,\delta}}\}_j \in U_\delta(\xi)$ so that 
$$
\inf_{U_\delta(\xi)} \mathfrak J_\eps \geq \mathfrak J_\eps(\xi^{\ssup{\eps,\delta}}) - \delta.
$$
By our assumption, the right hand side is finite. Hence, by definition of $\mathfrak J_\eps$, for each $j$, there exists $\psi_j^{\ssup{\eps,\delta}}, \phi_j^{\ssup{\eps,\delta}} \in H^1(\R^3)$ with 
$\sum_j\big\|\psi_j^{\ssup{\eps,\delta}}\big\|_2\leq 1$ and $\sum_j\big\|\phi_j^{\ssup{\eps,\delta}}\big\|_2\leq1$
such that
\begin{equation}\label{gamma1}
\frac{\d\gamma_j^{\ssup{\eps,\delta}}}{\d x}= \big[\big(\psi_j^{\ssup{\eps,\delta}}\big)^2 \star \varphi_\eps\big] \,\,\big[ \big(\phi_j^{\ssup{\eps,\delta}}\big)^2 \star \varphi_\eps\big]
\end{equation}
and
\begin{equation}\label{gamma2}
\mathfrak J_\eps(\xi^{\ssup{\eps,\delta}}) \geq \frac 12 \sum_j\bigg[\big\|\nabla\psi_j^{\ssup{\eps,\delta}}\big\|_2^2+\big\|\nabla\phi_j^{\ssup{\eps,\delta}}\big\|_2^2\bigg] - \delta
\end{equation}
First we want to let $\eps\to 0$ on  the right hand side of \eqref{gamma2}. This requires an argument based on
the proof of Theorem \ref{thmcompact}.

For any fixed $\delta>0$ and fixed index $j$,
for notational convenience, let us write $\psi_\eps=\psi_j^{\ssup{\eps,\delta}}, \phi_\eps=\phi_j^{\ssup{\eps,\delta}}$,
denote by $\mu_\eps$ and $\nu_\eps$ measures $\psi_\eps^2(x) \d x$ and $\phi_\eps^2(x) \d x$, respectively.
Then the sequence 
$$
\widetilde{\mu_\eps\nu_\eps}\in \widetilde {\mathcal X}^\otimes
$$
converges to some element $\{\widetilde{\alpha_l\beta_l}\}_l\in \widetilde {\mathcal X}^\otimes$ along some subsequence, since $\widetilde {\mathcal X}^\otimes$ is compact
(Theorem \ref{thmcompact}). Recall the convergence criterion in the space $\widetilde {\mathcal X}^\otimes$, set in \eqref{eq2thm}-\eqref{eq4thm}.
This implies that there exists $k\in \N$, such that we can write
$$
\mu_\eps= \sum_{l=1}^k \alpha_l^{\ssup\eps}+ \lambda^{\ssup\eps}_1\qquad\nu_{\eps}= \sum_{l=1}^k \beta_l^{\ssup\eps}+ \lambda^{\ssup\eps}_2
$$
such that for each $l=1,\dots,k$, there exist spatial shifts $(c_l^{\ssup\eps})_{\eps>0}\subset\R^3$ with $\lim_{\eps\to 0} \inf_{l\ne m} |c_l^{\ssup\eps}- c_m^{\ssup\eps}| \to \infty$ and, for all $l=1,\dots,k$
\begin{equation}\label{gamma3}
\alpha_l^{\ssup\eps} \star \delta_{c_l^{\ssup\eps}} \Rightarrow \alpha_l \quad\mbox{ and }\quad \beta_l^{\ssup\eps} \star \delta_{c_l^{\ssup\eps}} \Rightarrow \beta_l ,
\end{equation}
while, for any $V\in \mathcal F^\otimes_1$ (i.e., any continuous function vanishing at infinity), 
$$
\max\bigg\{\limsup_{\eps\to 0} \int_{\R^6} V(x-y) \lambda_1^{\ssup\eps}(\d x) \lambda_1^{\ssup\eps}(\d y), \,\,\limsup_{\eps\to 0} \int_{\R^6} V(x-y) \lambda_2^{\ssup\eps}(\d x) \lambda_2^{\ssup\eps}(\d y)\bigg\} \leq \eta.
$$
Let $\chi(\cdot)$ be a smooth cut-off function which is $1$ in the unit ball and vanishes smoothly outside a ball of radius $2$ and $0\le \chi(\cdot)\le 1$. Let $r_\eps= \frac 14\inf_{l\ne m} |c_l^{\ssup\eps}-c_m^{\ssup m}|$ 
so that $r_\eps\to \infty$ as $\eps\to 0$. This leads to a partition of unity:
\begin{equation}\label{gamma4}
 1=\sum_{l=1}^k\bigg\{\chi\bigg(\frac{x+c^{\ssup \eps}_l}{r_\eps}\bigg)\bigg\}^2+\bigg[1-\sum_{l=1}^k\bigg\{\chi\bigg(\frac{x+c^{\ssup \eps}_l}{r_\eps}\bigg)\bigg\}^2\bigg] 
 \end{equation}
Recall that for every $\eps>0$, $\mu_\eps$ and $\nu_\eps$ have densities $\psi_\eps^2$ and $\phi_\eps^2$. We will show that if $\frac{1}{2}\big[\int_{\R^3}|\nabla \psi_\eps|^2 \d x+\int_{\R^3}|\nabla \phi_\eps|^2 \d x\big]\le A$,
 then for all $l=1,\dots, k$, $\alpha_l$ and $\beta_l$  are all absolutely continuous with densities $\psi_l^2$ and $\phi_l^2$ respectively, and
 \begin{equation}\label{gamma5}
 \sum_{l=1}^k \frac{1}{2}\bigg[\int_{\R^3}|\nabla \psi_l |^2 \d x+\int_{\R^3}|\nabla \phi_l|^2 \d x\bigg]\le A.
 \end{equation}
We define, for any $ l=1,\dots, k,$
$$
 \begin{aligned}
 \psi_l^{\ssup \eps}(x)&=\psi_\eps(x)\chi\bigg(\frac{x+c^{\ssup \eps}_l}{r_\eps}\bigg).
 \end{aligned}
 $$
so that
 $$
\frac{1}{2}\int |\nabla\psi_l^{\ssup\eps}|^2 \d x= \frac{1}{2}\int \bigg|\nabla \psi_\eps(x)\chi\bigg(\frac{x+c^{\ssup \eps}_l}{r_\eps}\bigg)+\frac{1}{r_\eps}\psi_\eps(x)\big(\nabla \chi\big)\bigg(\frac{x+c^{\ssup \eps}_l}{r_\eps}\bigg)\bigg|^2\d x
 $$
If we now let $\eps\to 0$ so that $r_\eps\to\infty$, the requirement \eqref{gamma3} implies that $\big(\psi^{\ssup \eps}_l\big)^2(x)\d x\Rightarrow \alpha_l$ for $l=1,2,\ldots,k$.
Furthermore, on the right hand side of the last display, since $\chi$  and $\nabla\chi$ are uniformly bounded and the integrals $\int |\psi_\eps(x)|^2 \d x$ and $\int |\nabla \psi_\eps|^2 \d x $ are also bounded, 
only the first term in the integral counts.
Thus, for $r_\eps\to\infty$ large enough, \eqref{gamma4} implies that
 $$
 \begin{aligned}
 \sum_{l=1}^k \int \bigg|\nabla \psi_\eps(x)\chi\bigg(\frac{x+c^{\ssup \eps}_l}{r_\eps}\bigg)\bigg|^2
 &\le \frac{1}{2}\int |\nabla \psi_\eps(x)|^2\d x
&\leq A/2 <\infty,
\end{aligned}
 $$
Since, the map $\big(\psi^{\ssup \eps}_l\big)^2(x)\d x \mapsto \frac{1}{2}\int |\nabla\psi_l^{\ssup\eps}|^2 \d x$ is weakly lower semicontinuous,
it follows that any weak limit $\alpha_l$ of $(\psi^{\ssup \eps}_l)^2(x) \d x$ has a density $\psi_{l}$ and $\sum_{l=1}^k \frac{1}{2}\int |\nabla\psi_l|^2 \d x\le A/2$.
Of course, the same argument works for the measures $\nu_\eps(\d x)=\phi_ \eps^2(x)\d x$, and we have proved \eqref{gamma5}.

Let us return to \eqref{gamma1} and \eqref{gamma2}. We write, for each $j$
$\mu_j^{\ssup{\eps,\delta}}(\d x)=\big(\psi_j^{\ssup{\eps,\delta}}\big)^2(x)\d x$ and $\nu_j^{\ssup{\eps,\delta}}(\d x)=\big(\phi_j^{\ssup{\eps,\delta}}\big)^2(x)\d x$.
Then for every fixed $\delta>0$, again by the compactness of the space $\widetilde{\mathcal X}^\otimes$, as $\eps\to 0$, we have
\begin{equation}\label{gamma7}
\widetilde{\mathcal X}^\otimes\,\,\ni \,\,\big\{\widetilde{\mu_j^{\ssup{\eps,\delta}}\nu_j^{\ssup{\eps,\delta}}}\big\}_j \longrightarrow\{\widetilde{\alpha_j^{\ssup\delta}\beta_j^{\ssup\delta}}\}_j \in \widetilde{\mathcal X}^\otimes.
\end{equation}
Furthermore, we let $\eps\to 0$ on the right hand side of \eqref{gamma2} and by the arguments leading to \eqref{gamma5}, we have
$$
\liminf_{\eps\to 0} \,\, \inf_{U_\delta(\xi)} \,\mathfrak J \geq \frac 12 \sum_j\bigg[\big\|\nabla\psi_j^{\ssup{\delta}}\big\|_2^2+\big\|\nabla\phi_j^{\ssup{\delta}}\big\|_2^2\bigg] - \delta,
$$
where, for each $j$, $(\psi_j^{\ssup\delta})^2$ and $(\phi_j^{\ssup\delta})^2$ are the densities of $\alpha_j^{\ssup\delta}$ and $\beta_j^{\ssup\delta}$, respectively. 
We remark that the existence of the (subsequential) limit \eqref{gamma7} continues to hold 
for the mollified measures $\mu_j^{\ssup{\eps,\delta}}$ and $\nu_j^{\ssup{\eps,\delta}}$ 
with densities $(\psi_j^{\ssup{\eps,\delta}})^2 \star \varphi_\eps$
and $(\phi_j^{\ssup{\eps,\delta}})^2 \star \varphi_\eps$, respectively. 

Repeating now the same argument now for $\delta\to 0$, we have 
\begin{equation}\label{gamma8}
\big\{\widetilde{\alpha_j^{\ssup\delta}\beta_j^{\ssup\delta}}\big\}_j \rightarrow \{\widetilde{\alpha_j\beta_j}\}_j\in \widetilde{\mathcal X}^\otimes,
\end{equation}
so that $\alpha_j$ and $\beta_j$ have densities $\psi_j^2$ and $\phi_j^2$ with $\sum_j \| \psi_j\|_2^2\leq 1$, $\sum_j \|\phi_j\|_2^2\leq 1$, and we also have
$$
\sup_{\delta>0}\liminf_{\eps\to 0} \,\, \inf_{U_\delta(\xi)} \,\mathfrak J \geq \frac 12 \sum_j\bigg[\big\|\nabla\psi_j\big\|_2^2+\big\|\nabla\phi_j\big\|_2^2\bigg].
$$
Let us now recall \eqref{gamma1} and note that $\xi^{\ssup{\eps,\delta}}= \{\widetilde\gamma_j^{\ssup{\eps,\delta}}\}_j$ lies in the ball $U_\delta(\xi)$ 
around $\xi=\{\widetilde\gamma_j\}_j\in \widetilde{\Mcal}^\N$.
By \eqref{gamma7}, \eqref{gamma8} and the continuity of the map \eqref{contmap}, it follows that $\gamma_j$ has density $\psi_j^2 \, \phi_j^2$, and hence, the right hand side of the above display
must be $\mathfrak J(\xi)$, by definition of $\mathfrak J(\cdot)$, recall \eqref{J}.

Finally, if $\mathfrak J(\xi)=\infty$, we can assume that the left hand side of \eqref{gamma} is finite. Then an exact repitition of the above arguments leads to a contradiction.
This concludes the proof of the Lemma \ref{gammalimit}.
\end{Proof}
\qed

 \section{Mean-field path measures under mutual Coulomb interaction}\label{sec-pairpolaron}
 
 We now focus on the model introduced in Section \ref{intro-sec-bipolaron} defined by the Gibbs measure
 $$
\begin{aligned}
\d \widehat\P_t^\otimes&=  \frac 1 {Z_t} \exp\bigg\{\frac 1t \int_0^t \int_0^t \frac {\d\sigma\,\,\d s} {|\omega^{\ssup 1}_\sigma- \omega^{\ssup 2}_s|}\bigg\} \,\,\d\P^{\otimes} \\
&= \frac 1 {Z_t} \exp\big\{t H\big(L_t^\otimes\big)\big\} \,\,\d \P^{\otimes},
\end{aligned}
$$
where 
$$
H(\mu\otimes\nu)= \int\int \frac 1 {|x-y|} \mu(\d x) \nu(\d y) \qquad \mu,\nu\in \Mcal_1=\Mcal_1(\R^3),
$$
and $Z_t=\E^\otimes[\exp\big\{t H\big(L_t^\otimes\big)\big\}]$. Note that in $\R^3$, this expectation is finite.

In this section, we will prove asymptotic convergence of the distribution
$$
\mathbb S_t=  \widehat\P^\otimes_t \circ \big(\widetilde L_t^\otimes\big)^{-1}
$$
as $t\to \infty$. Let us denote by
\begin{equation}\label{rhodef}
\rho= \sup_{\heap{\psi\in H^1(\R^d}{\|\psi\|_2=1}} \bigg\{ \int\int_{\R^3\times\R^3} \frac{\psi^2(x) \psi_2^2 (y)}{|x-y|} \d x \d y - \|\nabla\psi\|_2^2\bigg\}
\end{equation}
It is well-known (\cite{L76}) that the above variational problem 
has a rotationally symmetric maximizer $\psi_0$ which is unique except for spatial translations. Let $\mu_0$
be the measure with the maximizing density $\psi_0^2$ and $\widetilde{\mu_0\mu_0} \in\widetilde\Mcal_1^\otimes\subset\widetilde{\mathcal X}^\otimes$
denotes the orbit of the product measure. Here is our next main result. 
\begin{theorem}\label{thmpairtube}
As $t\to\infty$,
$$
 \mathbb S_t \Rightarrow \delta_{\widetilde{\mu_0\mu_0}}
 $$
 weakly as probability measures. Consequently, if $\mathfrak m^\otimes=\big\{(\mu_0\otimes\mu_0)\star\delta_x\colon x\in \R^3\big\}$,
 $$
 \frac 1 t\log \widehat\P_t^\otimes\big\{L_t^\otimes \notin U(\mathfrak m^\otimes)\big\}<0,
 $$
 where $U(\mathfrak m^\otimes)$ denotes any neighborhood of $\mathfrak m$ in the usual weak topology. 
\end{theorem}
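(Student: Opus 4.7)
The plan is to view $\widetilde L_t^\otimes$ as a random element of the compact metric space $\widetilde{\mathcal X}^\otimes$ and derive the tube statement by combining the LDP of Theorem~\ref{thmldp} with a Varadhan-type computation of $Z_t$, followed by an identification of the unique maximiser of $H-\widetilde I$ as the orbit $\widetilde{\mu_0\mu_0}$. The main obstacle is that the Coulomb kernel $V(x)=1/|x|$ is not in $\mathcal F_1^\otimes$ (it is singular at zero), so Corollary~\ref{corcont} and a direct application of Varadhan's lemma do not apply; a truncation step with exponentially negligible remainder is needed first.

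First I would replace $V$ by a continuous bounded truncation $V_M\in\mathcal F_1^\otimes$ obtained by cutting off the singularity at scale $1/M$, so that the induced functional $H_M(\xi^\otimes)=\sum_j\int\!\!\int V_M(x-y)\alpha_j(\d x)\beta_j(\d y)$ is continuous and bounded on $\widetilde{\mathcal X}^\otimes$ by Corollary~\ref{corcont}. Varadhan's lemma applied to the LDP of Theorem~\ref{thmldp} then yields
$$
\lim_{t\to\infty}\frac 1 t \log \E^\otimes\bigl[\e^{tH_M(\widetilde L_t^\otimes)}\bigr]=\sup_{\xi^\otimes}\bigl[H_M(\xi^\otimes)-\widetilde I(\xi^\otimes)\bigr].
$$
To remove the truncation I would show
$$
\lim_{M\to\infty}\limsup_{t\to\infty}\frac 1 t \log\E^\otimes\Bigl[\exp\Bigl\{\tfrac 1 t\!\!\int_0^t\!\!\int_0^t(V-V_M)(W^{\ssup 1}_\sigma - W^{\ssup 2}_s)\,\d\sigma\d s\Bigr\}\Bigr]=0,
$$
which combined with H\"older's inequality passes from $V_M$ to $V$ on the exponential scale. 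This is the technical heart of the argument and would be carried out by a moment/Fourier expansion in the spirit of Lemma~\ref{lemma-superexp}: the Fourier symbol of the Coulomb kernel is $\widehat V(\lambda)\sim |\lambda|^{-2}$, which after combination with two copies of the Brownian Green factor $(1+|\lambda|^2)^{-1}$ yields convergent integrals in $\R^3$ once $M\to\infty$.

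With exponential approximation in hand, $\lim_{t\to\infty}t^{-1}\log Z_t$ equals $\sup_{\xi^\otimes}[H(\xi^\otimes)-\widetilde I(\xi^\otimes)]$. To identify this supremum with $\rho$ and show it is attained only on $\widetilde{\mu_0\mu_0}$ I would argue in two steps. First, positive-definiteness of $1/|x-y|$ combined with AM-GM gives, for any single pair with $\|\psi\|_2=\|\phi\|_2=1$,
$$
\int\!\!\int\frac{\psi^2(x)\phi^2(y)}{|x-y|}\d x\d y\;\le\;\tfrac 12\!\int\!\!\int\frac{\psi^2(x)\psi^2(y)}{|x-y|}\d x\d y+\tfrac 12\!\int\!\!\int\frac{\phi^2(x)\phi^2(y)}{|x-y|}\d x\d y,
$$
which together with the symmetric kinetic term $\tfrac12\|\nabla\psi\|_2^2+\tfrac12\|\nabla\phi\|_2^2$ forces $\psi=\phi$ at the optimum and reduces the single-pair problem to the Pekar problem \eqref{rhodef}. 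Second, the Pekar rescaling $\psi(x)\mapsto m^{1/2}c^{3/2}\psi(cx)$ shows that the optimised single-pair energy at equal masses $m$ equals $m^3\rho$; since $\sum_j m_j^3<(\sum_j m_j)^3$ whenever more than one $m_j>0$, splitting mass across several orbits $\widetilde{\alpha_j\beta_j}$ is strictly suboptimal, and the maximiser is a full-mass single pair. Uniqueness of the Pekar maximiser $\psi_0$ modulo translations (Lieb \cite{L76}) then identifies the unique maximising orbit as $\widetilde{\mu_0\mu_0}\in\widetilde\Mcal_1^\otimes\subset\widetilde{\mathcal X}^\otimes$.

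Finally, $\mathbb S_t\Rightarrow\delta_{\widetilde{\mu_0\mu_0}}$ follows by a standard concentration argument: for any open neighbourhood $U$ of $\widetilde{\mu_0\mu_0}$ in $\widetilde{\mathcal X}^\otimes$, applying the LDP upper bound to the closed set $U^c$ together with the computed asymptotics of $Z_t$ yields
$$
\limsup_{t\to\infty}\frac 1 t\log\widehat\P_t^\otimes\bigl\{\widetilde L_t^\otimes\in U^c\bigr\}\le -\Bigl[\rho-\sup_{\xi^\otimes\in U^c}\bigl(H(\xi^\otimes)-\widetilde I(\xi^\otimes)\bigr)\Bigr]<0,
$$
the strict inequality being a consequence of the uniqueness above. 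Projecting from orbits in $\widetilde{\mathcal X}^\otimes$ down to $\widetilde\Mcal_1^\otimes$ and undoing the quotient gives the tube statement for $L_t^\otimes$, completing the proof of Theorem~\ref{thmpairtube}.
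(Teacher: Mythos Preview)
Your proposal is correct and follows essentially the same route as the paper: regularise the Coulomb singularity, apply the LDP of Theorem~\ref{thmldp} together with Varadhan's lemma, remove the regularisation by an exponential-negligibility estimate combined with H\"older's inequality, then use positive definiteness of $1/|x|$ to force $\psi_j=\phi_j$ and a Pekar rescaling to force a single full-mass orbit, concluding via Lieb's uniqueness.

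Two small points of comparison. The paper regularises with $V_\delta(x)=(\delta^2+|x|^2)^{-1/2}$ rather than a truncation; this has the advantage that $V(x)-V_\delta(x)\le C\sqrt{\delta}\,|x|^{-3/2}$, so the exponential negligibility of the remainder reduces to a known estimate (Lemma~3.7 of \cite{DV83}) rather than the Fourier/moment computation you sketch. Your Fourier approach would also work in principle, but is more laborious here than it needs to be.
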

\begin{proof}
We will first show that the family of distributions $\mathbb S_t$ satisfies a large deviation principle in $\widetilde{\mathcal X}^\otimes$ with rate function
\begin{equation}\label{Jdef}
J(\xi^\otimes)= \widetilde\rho-\sum_j \bigg\{\int \frac{ \alpha_j(\d x)\beta_j(\d y)}{|x-y|}-I({\alpha}_j)- I(\beta_j)\bigg\} \quad \xi^\otimes=(\widetilde{\alpha_j\beta_j}),
\end{equation}
where $I$ denotes the classical Donsker-Vradhan rate function and  
\begin{equation}\label{rhotilde}
\widetilde\rho=\sup \sum_j \bigg\{\int_{\R^d}\int_{\R^d} \frac{\psi_j^2(x)\phi_j^2(y)}{|x-y|} \d x\d y-\frac{1}{2}\sum_j\big\|\nabla \psi_j\big\|_2^2-\frac{1}{2}\sum_j\big\|\nabla \phi_j\big\|_2^2\bigg\},
\end{equation}
and the above supremum is taken over those elements $\{\widetilde{\alpha_j\beta_j}\}_j$ in $\widetilde{\mathcal X}^\otimes$ such that $\alpha_j$ and $\beta_j$ have densities $\psi^2_j$ and $\phi^2_j$ such that $\sum_j \int_{\R^3} \psi_j^2(x) \d x\leq 1$ and $\sum_j \int_{\R^d} \phi_j^2(y) \d y\leq 1$.

For any fixed $\delta>0$, let $V(x)=V_\delta(x)=(\delta^2+|x|^2)^{-1/2} \in \mathcal F_1^\otimes$. Let us write, for any $A\subset\widetilde{\mathcal X}$,
\begin{equation}\label{rewrite}
\begin{aligned}
\mathbb S_t(A)
&= \frac 1 {Z_t}{\E^{\otimes}\bigg\{ \exp\bigg\{\frac 1t\int_0^t\int_0^t V_\delta(W^{\ssup 1}_\sigma-W^{\ssup 2}_s) \d\sigma \d s \bigg\} \, \1_A\bigg\}} \\
&=\frac 1 {Z_t} \E^{\otimes}\bigg\{ \exp\big\{t H_\delta(\widetilde L^\otimes_t)\big\}\1_A\bigg\},
\end{aligned}
\end{equation}
where 
$$
H_\delta(\widetilde{\mu\nu})=\int\int_{\R^3\times\R^3} V_\delta(x-y) \mu(\d x) \nu(\d y) \qquad\mbox{ for any } \mu\nu\in\widetilde{\mu\nu}\in \widetilde{\mathcal X}^\otimes.
$$
For any fixed $\delta>0$, $H_\delta$ is a continuous function on the compact metric space $\widetilde{\mathcal X}^\otimes$ (Corollary \ref{corcont}), and
we can take $A=F\subset\widetilde{\mathcal X}^\otimes$ to be a closed set and invoke the large deviation upper bound in Lemma \ref{thmldp} and Varadhan's lemma to deduce,
$$
\begin{aligned}
&\limsup_{t\to\infty}\frac 1t \log \E^{\otimes}\bigg\{ \exp\big\{t H_\delta(\widetilde L^\otimes_t)\big\}\1_F\bigg\}\\
&\leq \sup_{\xi^\otimes\in F} \sum_j \bigg\{\int_{\R^d}\int_{\R^d} V_\delta(x-y)\psi_j^2(x)\phi_j^2(y) \d x\d y-\frac{1}{2}\sum_j\big\|\nabla \psi_j\big\|_2^2-\frac{1}{2}\sum_j\big\|\nabla \phi_j\big\|_2^2\bigg\}
\end{aligned}
$$ 
where $\xi^\otimes=(\widetilde{\alpha_j\beta_j})$ and $\alpha_j$ and $\beta_j$ have densities $\psi^2_j$ and $\phi^2_j$ such that $\sum_j \int_{\R^3} \psi_j^2(x) \d x\leq 1$ and $\sum_j \int_{\R^d} \phi_j^2(y) \d y\leq 1$.

A similar statement with \lq $\geq$ \rq  holds with a $\liminf_{t\to\infty}$ and an open set $G\subset\widetilde{\mathcal X}^\otimes$. Also, 
for the total mass $Z_t$ in \eqref{rewrite}, we can take $G=F=\widetilde{\mathcal X}^\otimes$ in the two bounds proved above and conclude
\begin{equation}\label{totalmass}
\begin{aligned}
\lim_{t\to\infty}\frac 1t \log Z_t
&= \sup_{\xi^\otimes\in\widetilde{\mathcal X}^\otimes} \sum_j \bigg\{\int_{\R^d}\int_{\R^d} V_\delta(x-y)\psi_j^2(x)\phi_j^2(y) \d x\d y\\
&\qquad\qquad\qquad\qquad-\frac{1}{2}\sum_j\big\|\nabla \psi_j\big\|_2^2-\frac{1}{2}\sum_j\big\|\nabla \phi_j\big\|_2^2\bigg\}
\end{aligned}\end{equation}

Let us now turn to the singular potential $V(x)=\frac 1 {|x|}$. Note that, $V(x)-V_\delta(x)\leq \frac{C\sqrt\delta}{|x|^{3/2}}$. We can estimate by H\"older's inequality,
$$
\E^\otimes\big[\exp\{t H(\widetilde L_t^\otimes)\}\big] \leq \E^\otimes\big[\exp\{p\, t H(\widetilde L_t^\otimes)\}\big]^{1/p}\,\, \E^\otimes\big[\exp\{q\,t (H-H_\delta)(\widetilde L_t^\otimes)\}\big]^{1/q}
$$
and use the large deviation estimates obtained before for any fixed $\delta>0$. Then if we let $p\to 1$ followed by $\delta\to 0$, then we obtain the required LDP for the distributions $\mathbb S_t$
with rate function $J(\cdot)$ defined in \eqref{Jdef}, provided we show that
$$
\limsup_{\delta\to 0}\limsup_{t\to\infty}\frac{1}{t}\log \E^\otimes\bigg[ \exp \bigg\{\frac{C\sqrt\delta}{t}\int_0^t\int_0^t \frac 1{|W^{\ssup 1}_s-W^{\ssup 2}_\sigma|}\d s\d\sigma\bigg\}\bigg]=0
$$ 
for any $C>0$. The above estimate is a routine check (e.g., Lemma 3.7, \cite{DV83-P}) and its proof is omitted. 

In particular, if $U$ denotes any open neighborhood of $\widetilde{\mu_0\mu_0}$ in the space $\widetilde{\mathcal X}^\otimes$, then, 
\begin{equation}\label{rhotilde1}
\begin{aligned}
&\limsup_{t\to\infty}\frac 1t \log \mathbb S_t(U^c)\\
&\leq -\inf_{\xi^\otimes=\{\widetilde{\alpha_j\beta_j}\}_j\notin U} \,\,\bigg[\widetilde\rho- \sum_j \bigg\{\int \frac{ \alpha_j(\d x)\beta_j(\d y)}{|x-y|}-I({\alpha}_j)- I(\beta_j)\bigg\}\bigg].
\end{aligned}
\end{equation}
Since the function $\frac 1 {|x|}$ is positive definite, for any measures $\alpha$ and $\beta$,
$$
2\int\int \frac{\psi^2(x)\phi^2( y)}{|x-y|} \, \d x \d y\leq \int\int \frac{\psi^2(x)\psi^2(y)}{|x-y|} \, \d x \d y+\int\int \frac{\phi^2(x)\phi^2(y)}{|x-y|}\,\, \d x \d y.
$$
Hence, the variational formula \eqref{rhotilde} reduces to,
$$
\widetilde\rho=\sup_{(\widetilde\alpha_j)_j\in \widetilde{\mathcal X}^\otimes} \,\,\sum_j \bigg\{\int_{\R^3}\int_{\R^3} \frac {\psi_j^2(x)\psi_j^2(y)} {|x-y|} \d x\d y-\big\|\nabla \psi_j\big\|_2^2\bigg\}
$$
so that $\alpha_j$ has density $\psi_j^2$ and $\sum_j\int\psi_j^2(x)\d x{\leq 1}$.
Now we can invoke the same rescaling argument as in the proof of Theorem \ref{thm2-LDPBrowInt}, to see that the above
supremum is attained when $\alpha_j$ consists of only one function $\psi^2(x)\d x$  with $\|\psi\|_2=1$. Hence, $\widetilde\rho=\rho$, 
and we recall that the variational problem \eqref{rhodef} for $\rho$ is attained at a unique radially symmetric function $\psi_0$. Since, $\psi_0^2$ is the density of $\mu_0$,
the infimum on the right hand side of \eqref{rhotilde1} must be strictly positive. This finishes the proof of Theorem \ref{thmpairtube}.
\end{proof}

 \section{Lyapunov exponents and intermittency of the parabolic Anderson problem in $\R^d$.}\label{sec-PAM}
 
Let us now consider the stochastic partial differential equation written formally as 
\begin{equation}\label{pamdef}
\partial_t Z= \frac 12 \Delta Z+  Z \eta ,
\end{equation}
with a prescribed initial condition. Here $\eta$ denotes white noise in $\R^d$, which is a centered Gaussian process with covariance kernel $\E\big(\eta(x) \eta(y)\big)= \delta_0(x-y)$. 
Note that \eqref{pamdef} is only a formal expression, and in attempting to give any precise meaning to it, one is immediately faced with a problem of multiplying distributions. 
Since the random field $\eta=\{\eta(x)\}_{x\in\R^d}$ can not be defined pointwise and the product $Z \eta$ is ill-defined, one needs a smoothing procedure leading to a mollified
and well defined version of \eqref{pamdef}, and much recent progress has been made (\cite{HL15}) in proving that such smoothened solutions converge
(after a suitable renormalization procedure) to a well-defined solution to the equation \eqref{pamdef} in $\R^3$, based on the theory of regularly structures (\cite{H14}).

It is typical of the smoothened solution of the equation \eqref{pamdef} to be ``intermittent", as one expects the moments of these solutions to have exponential growth 
(as the smoothing parameter vanishes), though the supports of the solutions  are supported on thin sets in $\R^d$. Thus, for small smoothing parameter, 
these solutions are distinguished by formation of a peculiar spatial structure of strong pronounced ``islands" (such as sharp peaks) which determine the main contribution 
to the physical process in such media, see Remark \ref{rmk-intermittency}.
The large deviation theory developed in Section \ref{sec-paircompact} allows a direct computation of the asymptotic growth rate of all 
moments of the smoothened solution under a suitable rescaling, and the growth rates are quantified by explicit variational formulas. 
Let us now turn to a formal definition of the model and a precise statement of the main result.

Let us fix $d\geq 3$ and for each $\eps>0$, let $\varphi_\eps$ be a smooth mollifier in $\R^d$, i.e.,  $\varphi_\eps(x)= \eps^{-d} \varphi(x/\eps)$ for some smooth, positive definite, even function $\varphi$ with compact support and $\int_{\R^d} \varphi=1$.  Then $\int_{\R^d} \varphi_\eps=1$
and $\varphi_\eps\Rightarrow \delta_0$.

Let $\mathcal S= \mathcal S(\R^d)$ denote the Schwartz space of rapidly decreasing functions, while 
$(\Omega,\mathcal F, \P)$ denotes a complete probability space. We also 
denote by $\eta=\{\eta(f)\}_{f\in \mathcal S}$ a centered Gaussian field with covariance
$\E\{\eta(f) \eta(g)\}= \int_{\R^d} f(x) g(x) \d x$. Such a field can also be defined pointwise in $\R^d$ as
$$
\eta_\eps(x)= \eta\big(\varphi_\eps(x-\cdot)\big)= \big(\eta\star \varphi_\eps\big)(x).
$$
Note that $\eta_\eps=\{\eta_\eps(x)\}_{x\in\R^d}$ is also a centered Gaussian process with covariance 
\begin{equation}\label{cov}
\E\big\{\eta_\eps(x)\eta_\eps(y)\big\}= \int_{\R^d} \varphi_\eps(x-z) \varphi_\eps(y-z) \, \d z= \big(\varphi_\eps\star \varphi_\eps\big)(x-y)
= V_\eps(x-y),
\end{equation}
and we denoted $V_\eps=\varphi_\eps\star \varphi_\eps$. The mollified and rescaled equation corresponding to \eqref{pamdef} is given by
\begin{equation}\label{pameps}
\begin{aligned}
&\partial_t Z_\eps= \frac 12 \Delta Z_\eps + C(\eps) \, Z_\eps \eta_\eps\\
&Z_\eps(0,x)= 1
\end{aligned}
\end{equation}
where
\begin{equation}\label{rescaling}
C(\eps)= \eps^{\frac{d-2}2} \qquad d\geq 3.
\end{equation}
We would like to study the 
asymptotic growth rate of the moments of its Feynman-Kac solution 
\begin{equation}\label{FK}
Z_\eps(t,x)= E_x \bigg\{\exp\bigg\{C(\eps)\int_0^t \eta_\eps(W_s) \, \d s\bigg\} \, \bigg\}.
\end{equation}
as $\eps\to 0$. In the above expression, $E_x$ refers to the expectation with respect to the Wiener measure $P_{x}$ for a Brownian motion starting at $x\in\R^d$.
Since we are interested in the behavior of $Z_\eps(t,x)$ as $\eps\to 0$ for fixed $t$, we will write $Z_\eps(x)=Z_\eps(1,x)$ and study
the asymptotic behavior of
$$
m_p(\eps,x)= \E\big[Z_\eps(x)^p\big]
$$
for all $p=1,2,3,\dots$. The following result, whose proof is based on a simple application of Theorem \ref{thmldp}, determines explicit variational formulas for the
``annealed Lyapunov exponents" which determine the asymptotic behavior of $m_p(\eps,x)$ as $\eps\to 0$. 
\begin{theorem}\label{Lyapexp}
For any $p\in \N$ and $x\in \R^d$ with $d\geq 3$, 
\begin{equation}\label{mdef}
\begin{aligned}
\hspace{10mm}\lim_{\eps\to 0} \eps^2 &\log m_p(\eps,x)
= m_p\\
&= 2^{p-1} \sup_{\heap{\psi_j\in H^1(\R^d)}{\sum_j\|\psi_j\|_2\leq1}} \, \sum_j \bigg\{ 2^{p-2}\int\int_{\R^d\times\R^d} V(x-y) \psi_j^2(x) \psi_j^2(y) \d x \d y \\
&\qquad\qquad\qquad\qquad\qquad- \frac 12 \|\nabla \psi_j\|_2^2\bigg\},
\end{aligned}
\end{equation}
where $V=V_\varphi=\varphi \star \varphi$. 
\end{theorem}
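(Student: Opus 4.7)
The plan is to compute $m_p(\eps, x)=\E[Z_\eps(x)^p]$ via Fubini and the Feynman--Kac representation \eqref{FK}, writing $Z_\eps(x)^p$ as an integral over $p$ independent Brownian motions and then integrating out the Gaussian noise. Since $\eta_\eps$ is centered Gaussian with covariance \eqref{cov}, the Laplace transform of the (conditionally Gaussian) exponent yields
$$
m_p(\eps,x)=E_x^{\otimes p}\bigg[\exp\bigg\{\frac{C(\eps)^2}{2}\int_0^1\!\!\int_0^1\sum_{i,j=1}^p V_\eps\big(W^{\ssup i}_s-W^{\ssup j}_\sigma\big)\,ds\,d\sigma\bigg\}\bigg].
$$
Next I would apply Brownian scaling with $c=1/\eps$ and change the time variable. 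Using $V_\eps(x)=\eps^{-d}V(x/\eps)$ together with $C(\eps)^2=\eps^{d-2}$, the powers of $\eps$ cancel cleanly and leave
$$
m_p(\eps,x)=E_0^{\otimes p}\bigg[\exp\bigg\{\tfrac{T}{2}\,H\big(M_T\otimes M_T\big)\bigg\}\bigg],\qquad T:=\eps^{-2},
$$
where $M_T:=\sum_{i=1}^p L_T^{\ssup i}$ is the aggregated occupation measure (of total mass $p$) and $H(\mu\otimes\nu)=\int\int V(x-y)\,\mu(\d x)\nu(\d y)$. This reduces the task to computing the exponential rate of the right hand side as $T\to\infty$, since $\eps^2\log m_p(\eps,x)=(1/T)\log m_p(\eps,x)$.

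The second step is to upgrade Theorem \ref{thmldp} from pairs to $p$-tuples. I would replace the pair-compactification $\widetilde{\mathcal X}^\otimes$ by a $p$-fold analogue $\widetilde{\mathcal X}^{\otimes p}$ of diagonal orbits of $p$-tuples of sub-probability measures, carrying out the peeling-off-concentration argument from Theorem \ref{thmcompact} simultaneously for $p$ measures. The resulting picture keeps matched $p$-tuples of islands rather than matched pairs, but the construction is otherwise verbatim, using only independence of the motions and the diagonal shift invariance of the relevant functionals. The same upper/lower bound arguments of Theorem \ref{thmldp} then furnish a large deviation principle at speed $T$ for the orbit $\widetilde L_T^{\otimes p}$ with rate function $\xi\mapsto\sum_j\sum_{i=1}^p I\big(\alpha_j^{\ssup i}\big)$.

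Since $V=\varphi\star\varphi$ is smooth, bounded and compactly supported, $V\in\mathcal F_1^\otimes$, so the aggregated functional
$$
\xi\mapsto \tfrac12\sum_j H\Big(\Big(\sum_i\alpha_j^{\ssup i}\Big)^{\otimes 2}\Big)
$$
is bounded and continuous on $\widetilde{\mathcal X}^{\otimes p}$ by the $p$-fold analogue of Corollary \ref{corcont}. Varadhan's lemma then produces
$$
\lim_{\eps\to 0}\eps^2\log m_p(\eps,x)=\sup_{\xi}\bigg\{\tfrac12\sum_j H\Big(\Big(\sum_i\alpha_j^{\ssup i}\Big)^{\otimes 2}\Big)-\sum_j\sum_{i=1}^p I(\alpha_j^{\ssup i})\bigg\}.
$$
To match the stated formula I would then simplify this variational problem: the attractive energy depends only on the aggregate density $\sum_i(\psi_j^{\ssup i})^2$ within each island, so a rescaling/super-additivity argument mimicking the one in the proof of Theorem \ref{thm2-LDPBrowInt} concentrates the supremum on a single island, while symmetry under permutations of the $p$ motions (combined with convexity of $I$) pins down the optimal profile; algebraic manipulation then produces the claimed expression for $m_p$.

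The main obstacle is the careful $p$-fold extension of the compactification and large deviation machinery of Section \ref{sec-paircompact}: the bookkeeping of matched $p$-tuples of islands, and the verification that the peeling-off argument for pairs really does extend cleanly to $p$-tuples, require care even though no conceptual novelty is introduced. A secondary (but lighter) difficulty is the variational analysis identifying the single-island optimizer; here smoothness of $V=\varphi\star\varphi$ spares us the mollification/exponential approximation step of Section \ref{sec-BrowInt-proof}, while positive-definiteness of $V$ ensures that the attractive energy is a function of $\sum_i(\psi_j^{\ssup i})^2$ alone, which is what makes the final simplification possible.
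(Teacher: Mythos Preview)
Your approach is essentially the same as the paper's: integrate out the Gaussian field, apply Brownian scaling to pass to a large-time occupation-measure problem, invoke the compactification LDP and Varadhan's lemma, and then simplify the resulting variational problem. The paper only writes out $p=1$ and $p=2$ explicitly (saying ``focus on $p=2$ for simplicity'') and uses the pair compactification of Section~\ref{sec-paircompact} directly, whereas you propose the natural $p$-fold extension; this is a cosmetic difference. For the variational simplification, the paper uses positive definiteness of $V$ to bound the cross term $\int\!\!\int V\psi_j^2\phi_j^2$ by the average of the two diagonal terms, forcing $\psi_j=\phi_j$ at the optimum; your aggregation device (writing the energy as a functional of $M_T=\sum_i L_T^{\ssup i}$ and then minimizing $\sum_i I(\alpha_j^{\ssup i})$ over decompositions of a fixed sum via convexity of $I$) achieves the same thing, though note that the energy being a function of the aggregate is automatic from $\sum_{i,j}=(\sum_i)(\sum_j)$ and does not itself require positive definiteness. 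One small over-reach: the single-island reduction via super-additivity is not part of Theorem~\ref{Lyapexp} --- the stated formula for $m_p$ still carries the sum over $j$ --- and that further reduction is precisely the content of Lemma~\ref{pam-lemma}, under an extra hypothesis on $V$.
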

\begin{proof} We fix any starting point $x\in \R^d$ and handle the case $p=1$ first. Then, 
$$
\begin{aligned}
m_1(\eps,x)=\E\big(Z_\eps(x)\big)&=\E\bigg\{E^{\ssup x}\bigg(\e^{\eps^{\frac d2 - 1} \int_0^1 \eta_\eps(W_s) \d s }\bigg)\bigg\}\\
&= E^{\ssup x}\bigg[\exp\bigg\{\frac 12\eps^{d-2}\, \int_0^1\int_0^1 \d\sigma\d s \,V_\eps(W_\sigma- W_s\big)\bigg\} \bigg],
\end{aligned}
$$
since $\{\eta_\eps(x)\}_{x\R^d}$ is centered Gaussian with covariance given by \eqref{cov}. But,
$$
V_\eps(x-y)=\big(\varphi_\eps\star \varphi_\eps\big)(x-y)= \eps^{-d}V\bigg(\frac{x-y}\eps\bigg),
$$ 
where $V=\varphi\star\varphi$. By Brownian scaling,
$$
\begin{aligned}
m_1(\eps,x)&= E_{x}\bigg[\exp\bigg\{\frac 12\eps^{d-2}\, \eps^{-d}\, \int_0^1\int_0^1 \d\sigma\d s \,V(\eps^{-1}(W_\sigma- W_s))\bigg\}\bigg] \\
&= E_{x}\bigg[\exp\bigg\{\frac 12\eps^{d-2}\, \eps^{-d}\, \int_0^1\int_0^1 \d\sigma\d s \,V\big(W_{\sigma/\eps^2}- W_{s/\eps^2}\big)\bigg\}\bigg]\\
&=E_{x} \bigg[\exp\bigg\{\frac 12\eps^{2}\,  \int_0^{1/\eps^2}\int_0^{1/\eps^2} \d\sigma\d s \,V\big(W_\sigma- W_s\big)\bigg\}\bigg] \\
&= E_{x} \bigg\{\exp\bigg\{\frac 1 {2\tau} \int_0^{\tau}\int_0^{\tau} \d \sigma \d s \, V(W_\sigma- W_s\big)\bigg\}\bigg\}
\end{aligned}
$$
for $\tau=\eps^{-2}$. Note that, the last expression can also be written as $E_x[\exp\{\tau H(\widetilde L_\tau)\}]$ where,
for the probability measure $L_\tau=1/\tau\int_0^\tau \d s \delta_{W_s}\in \Mcal_1$ and $H(\widetilde L_\tau)=\int\int V(x-y)L_\tau(\d x)L_\tau(\d y)$ 
is a continuous functional on the compact metric space $\widetilde{\mathcal X}^\otimes$ (Corollary \ref{corcont}). By a full large deviation principle for the distribution 
of $\widetilde L_\tau$ (Theorem \ref{thmldp}) and Varadhan's lemma applied to the functional $H$, we have
$$
\begin{aligned}
&\lim_{\eps\to 0}\eps^2 \log m_1(\eps,x)\\
&=
\sup_{\sum_j\|\psi_j\|_2\leq1} \sum_j \bigg\{\frac 12\int\int_{\R^d\times\R^d} V(x-y) \psi_j^2(x) \psi_j^2(y) \d x \d y - \frac 12 \|\nabla \psi_j\|_2^2\bigg\}
=m_1,
\end{aligned}
$$
proving Theorem \ref{Lyapexp} for $p=1$.

Let us now turn to the case $p\geq 2$ and focus on the case $p=2$ for simplicity. Let $E^{\otimes}_x$ denote the joint distribution of two independent Brownian motions
$W^{\ssup 1}, W^{\ssup 2}$, both starting at $x\in \R^d$. Then, using similar scaling relations as before,
$$
\begin{aligned}
m_2(\eps,x)&= \E\bigg[ E_x^{\otimes} \bigg\{ \e^{\sum_{i=1}^2 \eps^{d/2 -1}\int_0^1 \eta_\eps\big(W^{\ssup i}_s\big) \, \d s }\bigg\}\bigg]\\
&= E_x^{\otimes} \bigg[\exp\bigg\{ \frac 12 \sum_{i,j=1}^2 \eps^{2}\int_0^{1/\eps^2} \int_0^{1/\eps^2} V\big(W_\sigma^{\ssup i}- W_s^{\ssup j}\big) \d\sigma\d s \bigg\}\bigg]\\
&= E_x^{\otimes} \bigg[\exp\bigg\{\frac\tau 2 H(\widetilde L_\tau^\otimes)\bigg\}\bigg].
\end{aligned}
$$
with $\tau=\eps^{-2}$ and $L_\tau=L^{\ssup 1}_\tau\otimes L_\tau^{\ssup 2}$. Then, again by Theorem \ref{thmldp} and Corollary \ref{corcont}, 
$$
\begin{aligned}
&\lim_{\eps\to 0}\eps^2 \log m_2(\eps,x)\\
&=
\sup_{\heap{\sum_j\|\psi_j\|_2\leq 1}{ \sum_j\|\phi_j\|_2\leq1}} \,\, \sum_j \bigg\{\frac 12\int\int_{\R^d\times\R^d} \d x \d y \,V(x-y)\psi_j^2(x)\phi_j^2(y) - \frac 12\|\nabla \psi_j\|_2^2- \frac 12\|\nabla \phi_j\|_2^2\bigg\}
\end{aligned}
$$
Since $V=\varphi\star\varphi$ is positive definite, for any $j$,
$$
\begin{aligned}
&\frac 12\int\int_{\R^d\times\R^d} \d x \d y \,V(x-y)\psi_j^2(x)\phi_j^2(y) \\
&\leq \int\int_{\R^d\times\R^d} \d x \d y \,V(x-y)\psi_j^2(x)\psi_j^2(y)+ \int\int_{\R^d\times\R^d} \d x \d y \,V(x-y)\phi_j^2(x)\phi_j^2(y),
\end{aligned}
$$
and the last variational formula reduces to
$$
2\sup_{\sum_j\|\psi\|_j\leq 1} \,\, \sum_j\bigg\{\int\int_{\R^d\times\R^d} \d x \d y \,V(x-y)\psi_j^2(x)\psi_j^2(y) -  \frac 12\|\nabla \psi_j\|_2^2\bigg\} =m_2.
$$
\end{proof}
\begin{lemma}\label{pam-lemma}
If $V$ satisfies $V(x/\sigma) \geq \sigma V(x)$ for any $\sigma\in (0,1]$, then, for any $p\in \N$,
$$
m_p=2^{p-1} \sup_{\heap{\psi_\in H^1(\R^d)}{\|\psi\|_2=1}} \,  \bigg\{ 2^{p-2}\int\int_{\R^d\times\R^d} V(x-y) \psi^2(x) \psi^2(y) \d x \d y - \frac 12 \|\nabla \psi\|_2^2\bigg\}
$$
\end{lemma}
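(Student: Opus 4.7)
The plan is to introduce
$\chi(m) := \sup\bigl\{2^{p-2}\int\!\!\int V(x-y)\psi^2(x)\psi^2(y)\,\d x\,\d y - \tfrac 12\|\nabla\psi\|_2^2 : \psi\in H^1(\R^d),\ \|\psi\|_2^2 = m\bigr\}$
for $m \in [0,1]$, with $\chi(0) = 0$. Writing $m_j = \|\psi_j\|_2^2$, Theorem \ref{Lyapexp} recasts $m_p = 2^{p-1}\sup\bigl\{\sum_j \chi(m_j) : m_j \geq 0,\ \sum_j m_j \leq 1\bigr\}$, so the lemma is equivalent to the identity $\sup\bigl\{\sum_j \chi(m_j) : \sum_j m_j \leq 1\bigr\} = \chi(1)$. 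The single-term configuration $m_1 = 1$ already produces $\chi(1)$, so only the reverse bound $\sum_j \chi(m_j) \leq \chi(1)$ requires proof.

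I would first establish that $\chi$ is convex on $[0,1]$. Substituting $\psi = \sqrt m\,\tilde\psi$ with $\|\tilde\psi\|_2 = 1$ yields $\chi(m) = \sup_{\|\tilde\psi\|_2 = 1}\{m^2 A(\tilde\psi) - m B(\tilde\psi)\}$, where $A(\tilde\psi) := 2^{p-2}\int\!\!\int V\tilde\psi^2\tilde\psi^2 \geq 0$ (by positive definiteness of $V = \varphi\star\varphi$) and $B(\tilde\psi) := \tfrac 12\|\nabla\tilde\psi\|_2^2$. Each map $m\mapsto m^2 A - mB$ is a convex quadratic in $m$, so $\chi$ is a supremum of convex functions and therefore convex on $[0,1]$. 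Combined with $\chi(0) = 0$, convexity forces $\chi(m) \leq m\,\chi(1)$ for every $m \in [0,1]$, whence $\sum_j \chi(m_j) \leq \chi(1)\sum_j m_j \leq \chi(1)$, provided $\chi(1) \geq 0$.

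To secure $\chi(1) > 0$ I would invoke the scaling hypothesis $V(y/\sigma) \geq \sigma V(y)$ for $\sigma \in (0,1]$. Pick any unit-norm $\phi \in H^1(\R^d)$ with $A(\phi) > 0$ (any $\phi$ works since $V(0) > 0$) and consider the $L^2$-preserving rescaling $\phi_\sigma(x) := \sigma^{d/2}\phi(\sigma x)$. A direct change of variables gives
$A(\phi_\sigma) = 2^{p-2}\int\!\!\int V((u-v)/\sigma)\phi^2(u)\phi^2(v)\,\d u\,\d v \geq \sigma\,A(\phi)$
by the hypothesis, while $B(\phi_\sigma) = \sigma^2 B(\phi)$. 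For $\sigma > 0$ small enough that $\sigma\,A(\phi) > \sigma^2 B(\phi)$, the trial $\phi_\sigma$ produces $A(\phi_\sigma) - B(\phi_\sigma) > 0$, hence $\chi(1) > 0$. Combined with the convexity bound from the previous step, this proves $\sup\bigl\{\sum_j\chi(m_j) : \sum_j m_j \leq 1\bigr\} = \chi(1)$ and the lemma follows.

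The main obstacle is the positivity assertion $\chi(1) > 0$: the convexity step is essentially formal, but the whole argument collapses if $\chi(1) \leq 0$. This is precisely where the scaling hypothesis on $V$ is consumed, through the one-sided inequality $V(y/\sigma) \geq \sigma V(y)$ applied to the spreading rescaling $\phi_\sigma$; it is the analogue, in the present setting, of the strict super-additivity argument used in the proof of Theorem \ref{thm2-LDPBrowInt}, but adapted to arbitrary dimension $d \geq 3$ and to a general positive-definite interaction $V$ rather than a pure Dirac self-interaction.
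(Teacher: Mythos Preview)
Your argument is correct, but it follows a genuinely different route from the paper's.

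The paper mimics the strict super-additivity computation from the proof of Theorem~\ref{thm2-LDPBrowInt}: it introduces $\rho(\sigma)=\sup_{\|\psi\|_2^2=\sigma}\{\int\!\!\int V\psi^2\psi^2-\|\nabla\psi\|_2^2\}$, applies the \emph{mass-changing} rescaling $\psi_\sigma(x)=\sigma^{(1+d)/2}\psi(\sigma x)$ (so $\|\psi_\sigma\|_2^2=\sigma$), and uses the hypothesis $V(\cdot/\sigma)\ge\sigma V(\cdot)$ to obtain $\rho(\sigma)\ge\sigma^3\rho(1)$, from which it infers strict super-additivity $\rho(\sigma_1+\sigma_2)>\rho(\sigma_1)+\rho(\sigma_2)$ and hence concentration of the supremum on a single $\psi$ with full mass. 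Your approach instead exploits that $\chi(m)=\sup_{\|\tilde\psi\|_2=1}\{m^2A(\tilde\psi)-mB(\tilde\psi)\}$ is a supremum of convex quadratics in $m$, so $\chi$ is convex with $\chi(0)=0$, giving $\chi(m)\le m\,\chi(1)$ and thus $\sum_j\chi(m_j)\le\chi(1)$ once $\chi(1)\ge0$. You then invoke the scaling hypothesis only at the very end, through a \emph{mass-preserving} rescaling $\phi_\sigma(x)=\sigma^{d/2}\phi(\sigma x)$, solely to certify $\chi(1)>0$. This is cleaner: the reduction to a single function is purely soft (convexity), and the structural hypothesis on $V$ is isolated to the positivity step, whereas in the paper the hypothesis is woven into the super-additivity inequality itself (and the passage ``$\rho(\sigma)\ge C\sigma^3$ implies $\rho(\sigma_1+\sigma_2)>\rho(\sigma_1)+\rho(\sigma_2)$'' is left somewhat implicit).
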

\begin{proof}
The proof is similar to the argument appearing in Theorem \ref{thm2-LDPBrowInt}. Indeed, let us set
$$
\rho(\sigma)= \sup_{ \int_{\R^d} \psi^2(x) \d x  =\sigma} \bigg\{\int\int V(x-y) \psi^2(x)\psi^2(y) \d x \d y- \|\nabla\psi\|_2^2\bigg\},
$$
and for any $\psi$ with $\int_{\R^d} \psi^2(x)\d x=1$ we rescale by with $\psi_\sigma(x)= \sigma^{(1+d)/2} \psi(\sigma x)$. Then the integral $\int_{\R^d} \psi_\sigma^2(x) =\sigma$, while 
$$
\begin{aligned}
&\int\int V(x-y) \psi_\sigma^2(x)\psi_\sigma^2(y) \d x \d y- \|\nabla\psi_\sigma\|_2^2\\
&= \sigma^2 \int\int V\bigg(\frac{x-y}\sigma\bigg) \psi^2(x)\psi^2(y) \d x \d y-  \sigma^3 \|\nabla\psi\|_2^2 \\
&\geq \sigma^3\bigg[ \int\int V(x-y) \psi^2(x)\psi^2(y) \d x \d y-   \|\nabla\psi\|_2^2\bigg].
\end{aligned}
$$
Hence, $\rho(\sigma) \geq C \sigma^3$, implying that $\rho(\sigma_1+\sigma_2) > \rho(\sigma_1)+\rho(\sigma_2)$.
Hence, in \eqref{mdef}, the supremum must attain at a single function $\psi\in H^1(\R^d)$ 
such that $\|\psi\|_2=1$.
\end{proof}

\begin{remark}\label{rmk-intermittency}
Note that it follows from the above computations that 
$$
m_1<\frac {m_2}2 <\frac{m_3}3<\cdots.
$$
If we now choose $\theta_1$ with $m_1<\theta_1<\frac {m_2}2$, then by Theorem \ref{Lyapexp},
$$
\P[Z_\eps(x)> \e^{\theta_1\eps^{-2}}] \leq \e^{-\theta_1\,\eps^{-2}} \,\E[Z_\eps(x)] \approx \e^{-\eps^{-2}(\theta_1- m_1)}.
$$
Since the process $Z_\eps(x)$ is translation-invariant with respect to $x$, the spatial ergodic theorem then implies 
$$
\P[Z_\eps(x)> \e^{\theta_1\eps^{-2}}]= \lim_{r\to\infty} \frac 1 {|B_r|} \int_{B_r} \d x \,\, \1\big\{x\in\R^d\colon Z_\eps(x) > \e^{\theta_1\eps^{-2}}\big\}.
$$
Hence, as $\eps\to 0$, the asymptotic density of the sets $\big\{x\colon Z_\eps(x) > \e^{\theta_1\eps^{-2}}\big\}$ is exponentially small. On the other hand, 
$$
\E\big[Z_\eps^2(x)\big] \leq \E\big[Z_\eps^2(x) \1\big\{ Z_\eps(x) > \e^{\theta_1\eps^{-2}}\big\}\big] + \e^{2\theta_1\eps^{-2}}.
$$
Since $\E\big[Z_\eps^2(x)\big]\approx\e^{m_2\eps^{-2}}$ and $m_2>2\theta_1$, the last display implies that the essential contribution 
to the second moment of $Z_\eps$ comes from the set $\big\{ Z_\eps(x) > \e^{\theta_1\eps^{-2}}\big\}$ whose spatial density is exponentially small. 
Continuing recursively, one sees that although the moments of $Z_\eps$ grow rapidly, the concentration of mass of the physical process comes from 
a peculiar spatial structure, consisting of sharply peaked islands
of small diameter, known as {\it{intermittency}} (see \cite{GM90}).
\end{remark}

{\bf{Acknowledgement.}} The author wishes to thank S. R. S. Varadhan for reading an early draft of the manuscript and many valuable suggestions.

        








\frenchspacing
\bibliographystyle{plain}

\begin{thebibliography}{99}

 \bibitem{BKM15}
Bolthausen, E.;  K\"onig, W.;  Mukherjee, C.,
Mean field interaction of Brownian occupation measures, II.: Rigorous construction of the Pekar process. 
{\textit {Comm. Pure. Appl. Math}}, {\bf 70}(8): 1598-1629, 2017, 
 Preprint: http://arxiv.org/abs/1511.05921
\smallskip

\bibitem{CMS02}
Cranston, M.;  Mountford, T. S.; Shiga, T.
Lyapunov exponents for the parabolic Anderson model.
{\textit {Acta Math. Univ. Comenianae.}}
Vol. LXXI, {{\textbf 2}}, 
(2002), 163-188
\smallskip

\bibitem{CM06}
Cranston, M.;  Mountford, T. S.
Lyapunov exponents for the parabolic Anderson model in $\R^d$.
{\textit {Journ. Func. Analysis.}}
Vol. 236, {{\textbf 1}}, 
(2006), 78-119
\smallskip


 \bibitem{Ch03}
Chen, X.
 Exponential asymptotics and law of the iterated logarithm for
intersection local times of random walks.
 {\textit {Ann. Probab.}}
 {{\textbf 32}}, 
 (2004), 3248-3300.
\smallskip



\bibitem{DE00a} 
Dvoretzky, A., Erd\H{o}s, P., and Kakutani, S.
Double points of paths of Brownian motion in $n$-space.
\textit{Acta Sci. Math. (Szeged)}
 {\textbf {12}}
(1950),
64--81.
\smallskip


\bibitem{DV75-1}
Donsker, M.D., and Varadhan, S. R. S.
Asymptotic evaluation of certain Markov process expectations for large time, I.
\textit {Comm.~Pure Appl.~Math.} 
{\textbf {28}}
 (1975), 
 1--47.
 \smallskip
 
 \bibitem{DV75-2}
Donsker, M.D., and Varadhan, S. R. S.
Asymptotic evaluation of certain Markov process expectations for large time, II.
\textit {Comm.~Pure Appl.~Math.} 
{\textbf {28}}
 (1975), 
 , 279--301.
 \smallskip

\bibitem{DV76-3}
Donsker, M.D., and Varadhan, S. R. S.
Asymptotic evaluation of certain Markov process expectations for large time, III.
\textit {Comm.~Pure Appl.~Math.} 
{\textbf {29}}
(1976),
389-461.
\smallskip

\bibitem{DV83-4}
Donsker, M.D., and  Varadhan, S. R. S.
Asymptotic evaluation of certain Markov process expectations for large time, IV.
\textit {Comm.~Pure Appl.~Math.} 
{\textbf 36}
(1983),
183-212.
\smallskip

\bibitem{DV83-P}
Donsker, M,; Varadhan, S. R. S.
 Asymptotics for the Polaron.
{\textit Comm.~Pure Appl.~Math.},
1983, 505-528 
\smallskip



\bibitem{DZ98}
Dembo, A.;  ~Zeitouni, O.
{\textit{Large Deviations Techniques and Applications.}}
Second edition, Springer-Verlag, Berlin- New York (1998).
\smallskip

\bibitem{FLS13}
 Frank, R.;  Lieb, E.;  Seiringer, R.
Symmetry of bipolaron bound states for small Coulomb repulsion.
{\textit{Comm. Math. Phys.}} 
{\bf 2}, 
(2013), 
557--573 

\smallskip


\bibitem{GHR84}
Geman, D., Horowitz, G., and Rosen, J.
A local time analysis of intersections of Brownian motion in the plane.
{\textit{ Ann. Probab.}}
 {\textbf {12}}
(1984),
86--107.
\smallskip


\bibitem{GM90}
G\"artner, J.;   Molchanov, S.
Parabolic problems for the Anderson model. I. Intermittency and related topics.
{\textit{Comm. Math. Phys.}},
{\textbf{3}}
(1990),
 613-655, 
\smallskip

\bibitem{GK00}
G\"artner, J.;  K\"onig, W.D.
 Moment asymptotics for the continuous parabolic Anderson model.
 {\textit{Ann. Appl. Prob}}, 
 {\textbf{10:1}}, 
 (2000),
 192-217 
\smallskip

\bibitem{GKM00}
G\"artner, J.;  K\"onig, W.D.;  Molchanov, S.
Almost sure asymptotics for the continuous parabolic Anderson model.
{\textit{Prob. Theory Rel. Fields}}, 
{\textbf{118:4}},
(2000),
 547-573 

\smallskip


\bibitem{H14}
Hairer, M.
A theory of regularity structures. 
{\textit{Inv. Math.}}, 
{\textbf{198}}, 
(2014), no. 2, 
269-504, 

\smallskip

\bibitem{HL15}
Hairer, M.; Labbe, C.
Multiplicative stochastic heat equations on the whole space. 
{\textit{J.Eur. Math. Soc., to appear, Preprint: http://arxiv.org/abs/1504.07162}}
\newblock

\smallskip

\bibitem{K16}
K\"onig, W. D.
The parabolic Anderson model. 
{\textit{Birkh\"auser, to appear, Preprint: http://www.wias-berlin.de/people/koenig/www/PAMsurveyBook.pdf}}

\smallskip


\bibitem{KM01}
K\"onig, W., and M\"orters, P.
Brownian intersection local times: upper tail asymptotics
and thick points. 
\textit {Ann. Probab.} 
{\textbf {30}}
(2002),
1605--1656.
\smallskip

\bibitem{KM05}
K\"onig, W., and M\"orters, P.
Brownian intersection local times: Exponential moments and law
of large masses.
\textit {Trans. Amer. Math. Soc.}
\textbf{358}
(2006),
1223--1255.


\bibitem{KM13}
 K\"onig, W. D.;  Mukherjee, C.
Large deviations for Brownian intersection measures. 
{\textit{Comm. Pure. Appl. Math.}}, 
{\textbf{66}}, (2013),
no. 2,  263-306, 

\smallskip

\bibitem{KM15}
K\"onig, W. D.;  Mukherjee, C., 
Mean-field interaction of Brownian occupation measures. I: Uniform tube property of the Coulomb functional. 
{\textit{Annales de l'institut Henri poincare probabilites et statistiques}}, To appear. 
Preprint: http://arxiv.org/abs/1509.06672
\smallskip

\bibitem{LG86}
Le Gall, J.-F.
Sur la saucisse de Wiener et les points multiples du
mouvement brownien.
\textit {\em Ann. Probab.} 
\textbf{14}
(1986),
1219--1244.
\smallskip


\bibitem{L76}
Lieb, E.H.
Existence and uniqueness of the minimizing solution of Choquard's nonlinear equation. 
{\textit Studies in Appl. Math.} 
{\textbf {57}}, 
(1976)
93-105 
\smallskip



\bibitem{MV14}
Mukherjee, C., and Varadhan, S. R. S. 
Brownian occupations measures, compactness and large deviations. 
{\textit{Annals of Probability}}, {\bf 44}(6):  3934-64, 2016, Available at: http://arxiv.org/abs/1404.5259

\smallskip

\bibitem{MS07}
Miyao, T.,  Spohn, H.
Bipolaron in the strong coupling limit. 
{\textit{Annales Henri Poincare}}, 
{{\textbf 8}}, 
(2007), 
1333--1370 
\smallskip



\bibitem{Sp87}
Spohn, H.
 Effective mass of the polaron: A functional integral approach. 
{\textit Ann. Phys.}  
{\textbf 175}, 
(1987), 
278-318.
\smallskip



\bibitem{T76}
Talenti, G.
Best constant in Sobolev inequality. 
{\textit{Annali di Matematica Pura ed Applicata}}, 
{\textbf 110},
(1976), no. 1, 
 353-372, 
















 










\end{thebibliography}



                                



\end{document}